\newcommand{\I}{\textrm{I}}
\newcommand{\gen}[1]{\langle #1\rangle}
\newcommand{\cls}[1]{\overline{#1}}
\newcommand{\ph}{\varphi}
\newcommand{\ins}{\subseteq}
\title{Geometric Variations of Local Systems \\ and Elliptic Surfaces}
\author[1]{Charles F. Doran\thanks {charles.doran@ualberta.ca}}
\affil[1]{Department of Mathematical and Statistical Sciences, University of Alberta and Center of Mathematical Sciences and Applications, Harvard University}
\author[2]{Jordan Kostiuk\thanks{jordan\_kostiuk@brown.edu}}
\affil[2]{Department of Mathematics, Brown University}
\date{}
\begin{document}
\newtheorem{problem}{Problem}
\newtheorem{theorem}{Theorem}
\newtheorem{theorem/def}{Theorem/Definition}
\newtheorem*{theorem*}{Theorem}
\newtheorem{lemma}{Lemma}
\newtheorem*{lemma*}{Lemma}
\newtheorem{proposition}{Proposition}
\newtheorem*{proposition*}{Proposition}

\newtheorem{corollary}{Corollary}
\theoremstyle{definition}
\newtheorem*{definition}{Definition}
\newtheorem*{definition*}{Definition}
\theoremstyle{remark}
\newtheorem{remark}{Remark}
\newtheorem*{example}{Example}

\maketitle

\begin{abstract}
    Geometric variations of local systems are families of variations of Hodge structure; they typically correspond to fibrations of K\"{a}hler manifolds for which each fibre itself is fibred by codimension one K\"{a}hler manifolds. 
    In this article, we introduce the formalism of geometric variations of local systems and then specialize the theory to study families of elliptic surfaces. 
    We interpret a construction of twisted elliptic surface families used by Besser-Livn\'{e} in terms of the middle convolution functor, and use explicit methods to calculate the variations of Hodge structure underlying the universal families of $M_N$-polarized K3 surfaces. 
    Finally, we explain the connection between geometric variations of local systems and geometric isomonodromic deformations, which were originally considered by the first author in 1999. 
\end{abstract}

\tableofcontents

\section{Introduction}

Monodromy preserving deformations of Fuchsian ordinary differential equations correspond to solutions to nonlinear isomonodromic deformation equations.  In \cite{doran_picard-fuchs_1999}, the first author studied the situation in which the Fuchsian equations arise as the Picard-Fuchs ODE for a one-parameter family of algebraic varieties and the monodromy preserving deformations are induced by complex structure deformations of this family, resulting in {\em geometric isomonodromic deformations}.  In particular, when applied to rational elliptic surfaces with four singular fibers, this construction produces algebraic solutions to the Painlev\'{e} VI equation.

The goal of the present paper is to revisit this construction, generalizing and recasting it in terms of the notion of  variation of local systems of Dettweiler and Wewers, yielding the {\em geometric variations of local systems} (GVLS) of the title.  

The organization of this article is as follows. 
In Section \ref{sec:GVLS}, we review some prerequisites from the theory of local systems and the definition of a variation of local systems given in \cite{dettweiler_variation_2006}. 
After recalling Hodge-theoretic results from \cite{zucker_hodge_1979}, we introduce the notion of a geometric variation of local system  and its parabolic cohomology. 
Geometric variations of local systems were first introduced in the second author's Ph.D. thesis \cite{kostiuk_geometric_2018}; the definitions in this article are more precise respect to various geometric notions and take into account the full lattice structure obtained by considering the associated variations of Hodge structure. 
Section \ref{sec:computing} goes into some detail about the Dettweiler-Wewers algorithm that is used in \cites{dettweiler_variation_2006,dettweiler_variation_2006-1} to compute the parabolic cohomology associated to a variation of local systems. 
This algorithm is used extensively in the rest of this paper to perform our calculations. 

In Section \ref{sec:parabolic_cohomology_of_elliptic}, we review some of the basic theory of elliptic surfaces and describe in detail the nature of the parabolic cohomology of an elliptic fibration. 
We tie together various results in the literature to identify the parabolic cohomology lattice embedded in the second cohomology in Proposition \ref{Prop:EllipticSurfacePCohomStructure} and then explain the relationship between this viewpoint and the de Rham perspective espoused in \cites{stiller_elliptic_1980,stiller_special_1984}.

In Section \ref{sec:middle_convolution}, we use the middle convolution functor---the original motivation for introducing variations of local systems---to construct a class of geometric variations of local systems.
Specifically, we calculate the integral variation of Hodge structure that underlies the family of elliptic surfaces obtained by starting with a fixed elliptic surface and then performing a quadratic twist at one singular fibre and one varying smooth fibre. 
After proving some general results about this construction, we calculate the parabolic cohomology local systems and Picard-Fuchs equations obtained by applying this construction to all 38 of the rigid elliptic surfaces with 4 singular fibres that were classified in \cite{herfurtner_elliptic_1991}. 
A subset of these Picard-Fuchs equations were calculated in \cite{besser_picard-fuchs_2012} and our results relate their work back to the middle convolution, proving some of their observations along the way, as well as allowing one to compute the global monodromy representations.
The Picard-Fuchs equations and transcendental lattices for these families are tabulated in the Appendix.

In Section \ref{sec:K3surfaces}, we use our methods to calculate the integral variation of Hodge structure on the modular curve $X_0(N)^+$ underlying the universal family $\mathcal{X}_N$ of $M_N$-polarized K3 surfaces for $N\in\{2,3,4,5,6,7,8,9,11\}$.
These values of $N$ correspond exactly to the values of $N$ for which families of Calabi-Yau threefolds can be fibred by $M_N$-polarized K3 surfaces.
This calculation is done by finding an internal elliptic fibration on the family $\mathcal{X}_N$ and calculating the parabolic cohomology of the associated geometric variation of local systems. 
In contrast to the simplicity of the computations in Section \ref{sec:middle_convolution}, the computations in Section \ref{sec:K3surfaces} demonstrate the fact that determining the so-called braiding map---required to run the Dettweiler-Wewers algorithm---is, in general, not a trivial task. 

Finally, in Section \ref{sec:conclusion}, we relate our work back to the theory of geometric isomonodromic deformations, which were introduced by the first author in \cite{doran_algebraic_2001} with the goal of constructing interesting solutions to isomonodromic deformation equations. 
In particular, the perspective taken in the present work allows for non-algebraic geometric isomonodromic deformations coming from families of non-algebraic K\"{a}hler manifolds.


\section{Geometric Variations of Local Systems}\label{sec:GVLS}

\subsection{Variations of Local Systems}

We assume the reader is familiar with the basic theory of local systems, but will review some definitions below. 
Let $X$ be a locally contractible topological space, and let $R$ be a commutative ring with unit. 

\begin{definition}
A \emph{local system of $R$-modules on $X$} is a locally constant sheaf $\mathcal{V}$ of free $R$-modules of finite rank. 
The stalk of $\mathcal{V}$ at $x\in X$ is denote by $\mathcal{V}_{x}$. 
If $x_0\in X$ is a base point and $V=\mathcal{V}_{x_0}$, then the fundamental group $\pi_1(X,x_0)$ acts on $V$. 
The corresponding representation $\rho\colon\pi_1(X,x_0)\to\textrm{GL}(V)$ is called the \emph{monodromy representation} associated to $\mathcal{V}$.
\end{definition}

\begin{remark}
		We use the convention that the fundamental group acts on $V$ \emph{on the right}. 
	After picking a basis for $V$, we represent elements of $V$ as row vectors and the monodromy representation 
	$$\rho\colon \pi_1(X,x_0)\to\textrm{GL}_p(R)$$
	is given by right-multiplication:
	$$v^\gamma=v\cdot \rho(\gamma).$$
	These conventions are the same as those that appear in \cite{dettweiler_variation_2006}, and will allow us to most easily implement their algorithms.
\end{remark}

We now focus on the case where $X\cong\mathbf{P}^1_\mathbf{C}$ is the Riemann sphere. 
Let $D=\{x_1,\dots, x_r\}\ins X$ be a subset of $r$ pairwise distinct points, and let $U=X-D$. 
Then, one can choose simple loops $\gamma_i\in\pi_1(U,x_0)$  that  go around $x_i$ counter-clockwise in such a way that
$$\gamma_1\cdots\gamma_r=1.$$
This gives us a presentation of $\pi_1(U,x_0)$ as a free group on $r-1$ generators. 
A local system of $R$-modules on $U$ corresponds to a representation $\rho\colon\pi_1(U,x_0)\to\textrm{GL}(V)$ which, in turn, corresponds to an $r$-tuple of transformations $g_i=\rho(\gamma_i)$ satisfying
$$g_1\cdots g_r=1.$$
Conversely, an $r$-tuple of transformations  $\mathbf{g}=(g_1,\dots, g_r)$ with trivial product determines a local system via the categorical equivalence between local systems and representations of the fundamental group acting on the fibre, as was proved by Deligne \cite{deligne_equations_1970}.

Let $j\colon U\to X$ denote the inclusion map. The parabolic cohomology of a local system is the following cohomological invariant:
\begin{definition}
	The (first) \emph{parabolic cohomology} of the local system $\mathcal{V}$ is the sheaf cohomology of $j_*\mathcal{V}$ and will be denoted by 
	$$H^1_p(U,\mathcal{V}):=H^1(X,j_*\mathcal{V}).$$
	According to \cite{dettweiler_variation_2006}, this cohomology group is a subgroup of $H^1(\pi_1(U,x_0),V)$, computed using group cohomology.
\end{definition}

In concrete terms, a cocycle for $\pi_1(U,x_0)$ with values in $V$ is a map $\delta\colon\pi_1(U,x_0)\to V$ satisfying $\delta(\alpha\beta)=\delta(\alpha)\cdot\rho(\beta)+\delta(\beta)$. 
The subgroup of \emph{parabolic cocycles}, i.e., the parabolic cohomology subgroup, corresponds to the cocycles satisfying $$\delta(\gamma_i)\in\textrm{im}(g_i-1),\ i=1,\dots, r,$$ as is explained in \cite{dettweiler_variation_2006}.

If the stabilizer $V^{\pi_1(U,_0)}$ is trivial, then the rank of the parabolic cohomology group is computed as follows when $R=K$ is a field: 
\begin{equation}\label{rank_formula}
\dim_K H^1_p(U,\mathcal{V})=(r-2)\cdot\dim_K V-\sum_{i=1}^r\dim_K\ker(g_i-1).
\end{equation}
Thus, the rank of parabolic cohomology is completely determined by the \emph{local monodromy matrices}.

Local systems are very closely related to \emph{flat connections}. 
\begin{definition}
Let $\mathcal{V}$ be a quasi-coherent sheaf of $\mathcal{O}_X$-modules. 
A \emph{connection} on $\mathcal{V}$ is a $\mathbf{C}$-linear homomorphism
$$\nabla\colon\mathcal{V}\to\Omega^1_X\otimes_{\mathcal{O}_X}\mathcal{V}:=\Omega^1_X(\mathcal{V})$$
that satisfies the Leibniz identity:
$$\nabla(gs)=dg\otimes s+g\nabla s.$$
A connection $\nabla=\nabla_0\colon\mathcal{V}\to\Omega_X^1\otimes\mathcal{V}$ extends to a $\mathbf{C}$-linear map 
$$\nabla_i\colon\Omega^i\otimes\mathcal{V}\to\Omega^{i+1}\otimes\mathcal{V}$$ via
$$\nabla_i(\omega\otimes s):=d\omega\otimes s+(-1)^i\omega\wedge\nabla_0(s).$$
The composition $R=\nabla_1\circ \nabla_0$ is called the \emph{curvature} of the connection $\nabla$; a connection $\nabla$ is \emph{flat}, or \emph{integrable} if $R=0$. 
\end{definition}

Suppose now that $E$ is a complex local system on $X$, and let $\mathcal{E}=\mathcal{O}_X\otimes E$. 
Then we can give $\mathcal{E}$ a natural connection $\nabla$ for which $E=\ker\nabla$ by setting
$$\nabla(gs)=dg s,$$
where $g\in\mathcal{O}_X$ and $s\in E$. 
This connection is flat and known as the \emph{Gauss-Manin connection} associated to the local system $E$.
Conversely, we have the following theorem of Deligne:
\begin{theorem*}[Deligne \cite{deligne_equations_1970}]
	Let $\nabla$ be a connection on a locally free sheaf $\mathcal{E}$ over a connected domain $X$, and set $E=\ker\nabla$. 
	If $\nabla$ is flat, then $E$ is a local system on $X$ and $\mathcal{E}=\mathcal{O}_X\otimes E$. 
\end{theorem*}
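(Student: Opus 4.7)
The plan is to work locally and reduce the statement to a classical integrability theorem for overdetermined systems of linear ODEs. Since being a local system and the isomorphism $\mathcal{E}\cong\mathcal{O}_X\otimes E$ are both local properties, I would fix a point $x_0\in X$ and choose a small contractible open neighborhood $U$ on which $\mathcal{E}$ is free of rank $n$, with a chosen frame $e_1,\dots,e_n$. Relative to this frame, the connection takes the form
\[
\nabla\left(\sum f_i e_i\right)=\sum df_i\otimes e_i+\sum f_i\,\omega_{ij}\otimes e_j,
\]
so that a section $s=\sum f_i e_i$ is flat precisely when the vector $F=(f_1,\dots,f_n)^t$ solves the first-order linear system $dF=-AF$, where $A=(\omega_{ji})$ is an $n\times n$ matrix of holomorphic $1$-forms on $U$.

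Next I would translate flatness of $\nabla$ into the integrability condition on $A$. Computing $\nabla_1\circ\nabla_0$ on the frame yields the curvature matrix $dA+A\wedge A$, so the hypothesis $R=0$ is exactly
\[
dA+A\wedge A=0.
\]
This is the Frobenius (equivalently, Maurer--Cartan) integrability condition for the overdetermined Pfaffian system $dF+AF=0$. Invoking the classical existence theorem for integrable Pfaffian systems on a simply connected domain (or, equivalently, the holomorphic Frobenius theorem applied to the horizontal distribution on the total space of the frame bundle of $\mathcal{E}|_U$), I obtain, for any prescribed initial value $F(x_0)=v\in\mathbf{C}^n$, a unique holomorphic solution $F$ on $U$. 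Consequently the evaluation map $E(U)\to\mathcal{E}_{x_0}$ sending a flat section to its value at $x_0$ is a $\mathbf{C}$-linear isomorphism, so $E|_U$ is the constant sheaf of rank $n$.

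Since local constancy is local, this already proves that $E=\ker\nabla$ is a local system of $\mathbf{C}$-vector spaces of rank $n$ on $X$. For the identification $\mathcal{E}=\mathcal{O}_X\otimes_{\mathbf{C}} E$, I would use the evident $\mathcal{O}_X$-linear map
\[
\mathcal{O}_X\otimes_{\mathbf{C}} E \longrightarrow \mathcal{E},\qquad g\otimes s\longmapsto gs,
\]
which is well defined because $\nabla(gs)=dg\otimes s+g\nabla s=dg\otimes s$ still makes sense. To check it is an isomorphism, I work on the neighborhoods $U$ above: the flat basis of $E(U)$ produced by Frobenius is, in particular, an $\mathcal{O}_U$-basis of $\mathcal{E}|_U$ (a change-of-frame matrix expressing the $e_i$ in terms of the flat sections is invertible because at $x_0$ it is the identity, up to the choice of initial conditions). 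Hence the map is an isomorphism of stalks at every point, and therefore an isomorphism of sheaves.

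The main obstacle, and the only genuinely analytic input, is the existence step in the second paragraph: producing a full $n$-dimensional space of flat sections on a simply connected open set from the vanishing of the curvature. Everything else is formal bookkeeping translating this existence into the statements that $\ker\nabla$ is locally constant of the right rank and that tensoring with $\mathcal{O}_X$ recovers $\mathcal{E}$.
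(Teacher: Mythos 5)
The paper does not prove this statement at all: it is quoted as a classical theorem of Deligne (from \emph{\'Equations diff\'erentielles \`a points singuliers r\'eguliers}) and used as a black box, so there is no in-paper argument to compare yours against. Your proof is the standard one and is essentially correct: trivialize locally, write $\nabla = d + A$, identify flatness with the integrability condition $dA + A\wedge A = 0$, apply the holomorphic Frobenius theorem to get an $n$-dimensional solution space on a contractible chart with evaluation at a point an isomorphism, and then observe that the natural map $\mathcal{O}_X\otimes_{\mathbf{C}}E\to\mathcal{E}$ is locally, hence globally, an isomorphism.

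One small step deserves tightening. You justify invertibility of the change-of-frame matrix between $(e_i)$ and the flat frame by noting it equals the identity at $x_0$; that only gives invertibility in a neighborhood of $x_0$, not on all of $U$. The clean fix is to note that for \emph{every} $y\in U$ the evaluation map $E(U)\to\mathcal{E}_y$ is an isomorphism (run the same existence-and-uniqueness argument with $y$ as the base point), so the flat sections span each fibre; equivalently, $\det\Phi$ satisfies the scalar equation $d(\det\Phi)=-\mathrm{tr}(A)\,\det\Phi$ and hence never vanishes on connected $U$. With that adjustment the argument is complete.
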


In this paper, we will be mostly interested in the Gauss-Manin connection on cohomology bundles associated to algebraic varieties. 
Specifically, suppose that $f\colon X\to T$ is a fibration of smooth projective varieties, and let $X_t:=f^{-1}(t)$ denote the fibre. Then, $\mathcal{V}:=R^kf_*\mathbf{Z}$, the higher direct-image sheaf, is a local system on $T$ whose fibres are the cohomology groups $H^k(X_t,\mathbf{Z})$. 
The Gauss-Manin connection $\nabla_{GM}$ on the cohomology bundle $\mathcal{H}(X/T):=R^kf_*\mathbf{Z}\otimes\mathcal{O}_T$ is the unique holomorphic connection whose flat sections are the elements of $R^kf_*\mathbf{C}$.

\subsection{Variations of Local Systems and Parabolic Cohomology}

We now recall some of the theory of \emph{variations of local systems}, as introduced in \cite{dettweiler_variation_2006}, which should be thought of as a varying family of local systems. 
\begin{definition}
	Let $A$ be a connected complex manifold and $r\geq 3$. 
	An \emph{$r$-configuration} over $A$ consists of a smooth and proper morphism $\cls{\pi}\colon X\to A$ of complex manifolds together with a smooth relative divisor $D\ins X$ for which the fibres $X_a$ are isomorphic to $\mathbf{P}^1_\mathbf{C}$  and $D\cap X_a$ consists of $r$ pairwise distinct points.
	We will often use $(X,D)$ to denote the $r$-configuration, suppressing the map $\cls{\pi}\colon X\to A$ from the notation.
\end{definition}

\begin{definition}
Fix an $r$-configuration $(X,D)$ over $A$ and basepoint $a_0\in A$.
Let $U=X-D$, $D_0=D_{a_0}$, and  $X_0=\cls{\pi}^{-1}(a_0)$, with $U_0=X_0-D_0\cong\mathbf{P}^1-\{x_1,\dots, x_r\}$. 
Let $\mathcal{V}_0$ be a local system on $U_0$. 
    A \emph{variation of the local system $\mathcal{V}_0$ over the $r$-configuration $(X,D)$} is a local system $\mathcal{V}$ of $R$-modules on $U$ whose restriction to $U_0$ is identified with $\mathcal{V}_0$. 
    We  often refer to $\mathcal{V}$ is a variation of local systems, suppressing the $r$-configuration $(X,D)$. 
\end{definition}
Notation as in the previous definition, let $j\colon U\to X$ be the inclusion and $\pi\colon U\to A$ the projection. 
Let $x_0\in U_0$ be a base point. 
The fibration $\pi\colon U\to A$ gives rise to a short exact sequence of fundamental groups \cite{dettweiler_variation_2006}:
\begin{equation}\label{VLS_ses}
\xymatrix{1\ar[r]&\pi_1(U_0,x_0)\ar[r]&\pi_1(U,x_0)\ar[r]&\pi_1(A,a_0)\ar[r]&1.}
\end{equation}

\begin{definition}
    	The \emph{parabolic cohomology} of the variation $\mathcal{V}$ is the higher direct image sheaf
	$$\mathcal{W}=R^1\cls{\pi}_*(j_*\mathcal{V}).$$
\end{definition}

	By definition, the parabolic cohomology of the variation $\mathcal{V}$ is a sheaf of $R$-modules on $A$. 
	Locally on $A$, the configuration $(X,D)$ is topologically trivial,  and it follows that $\mathcal{W}$ is locally constant with fibre 
	$$W=H^1_p(U_0,\mathcal{V}_0).$$
	Therefore, $\mathcal{W}$ is itself a local system of $R$-modules. 
	Let $\eta\colon\pi_1(A,a_0)\to\textrm{GL}(W)$ denote its corresponding monodromy representation.
The following lemma describes the monodromy representation for this local system:
\begin{lemma*}[Lemma 2.2 \cite{dettweiler_variation_2006}]
	Let $\beta\in\pi_1(A)$ and $\delta\colon\pi_1(U_0)\to V$ be a parabolic cocycle, with $[\delta]$ the corresponding equivalence class. 
	Let $\tilde{\beta}\in\pi_1(U)$ be a lift of $\beta$. 
	Then $[\delta]^{\eta(\beta)}=[\delta']$, where
	$\delta'\colon\pi_1(U_0)\to V$ is the cocycle
	$$\alpha\mapsto\delta(\tilde{\beta}\alpha\tilde{\beta}^{-1})\cdot\rho(\tilde{\beta}),\ \alpha\in\pi_1(U_0).$$
\end{lemma*}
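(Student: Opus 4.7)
The plan is to show that the geometric monodromy on $\mathcal{W} = R^1\cls{\pi}_*(j_*\mathcal{V})$ along $\beta$ coincides with the standard action of $\pi_1(A)$ on the group cohomology $H^1(\pi_1(U_0), V)$ arising from \eqref{VLS_ses}, and then to read off the stated cocycle formula. First I would apply proper base change for $\cls{\pi}$ to identify the stalk $\mathcal{W}_{a_0}$ with $W = H^1_p(U_0, \mathcal{V}_0)$, which by the discussion preceding the lemma embeds into $H^1(\pi_1(U_0, x_0), V)$ as the subgroup of parabolic cocycles. The local topological triviality of $(X, D)$ over $A$ makes $\pi\colon U \to A$ a topological fibre bundle, so parallel transport along $\beta$ is realised by a self-homeomorphism $\phi_{\tilde\beta}$ of $U_0$, well-defined up to isotopy once a lift $\tilde\beta\in\pi_1(U, x_0)$ of $\beta$ is chosen; under \eqref{VLS_ses}, the induced automorphism of $\pi_1(U_0, x_0)$ is exactly conjugation by $\tilde\beta$, while the monodromy of $\mathcal{V}$ along $\tilde\beta$ transports the coefficient module $V$ by right multiplication by $\rho(\tilde\beta)$.

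Combining the pull-back of a cocycle along $\phi_{\tilde\beta}$ with this coefficient transport produces the candidate formula $\delta'(\alpha) = \delta(\tilde\beta\alpha\tilde\beta^{-1})\cdot\rho(\tilde\beta)$. The algebraic checks are routine: the cocycle identity for $\delta'$ follows from that for $\delta$ together with the relation $\rho(\tilde\beta\alpha\tilde\beta^{-1}) = \rho(\tilde\beta)\rho(\alpha)\rho(\tilde\beta)^{-1}$, valid because $\pi_1(U_0)$ is normal in $\pi_1(U)$; replacing $\tilde\beta$ by $\gamma\tilde\beta$ with $\gamma\in\pi_1(U_0)$ modifies $\delta'$ by the coboundary of $\delta(\gamma)\rho(\tilde\beta)$, so $[\delta']$ depends only on $\beta$; and conjugation by $\tilde\beta$ permutes the standard generators $\gamma_i$ up to inner automorphisms, which together with the corresponding permutation of the $g_i$ preserves the parabolic condition $\delta(\gamma_i)\in\textrm{im}(g_i - 1)$ modulo coboundaries.

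The main obstacle is establishing that the geometric monodromy of the higher direct image $R^1\cls{\pi}_*(j_*\mathcal{V})$ really is the induced action on group cohomology. The cleanest route is via a Leray spectral sequence comparison: the Leray sequence for $\pi\colon U\to A$ with coefficients in $\mathcal{V}$ matches the Lyndon-Hochschild-Serre sequence for \eqref{VLS_ses} with coefficients in $V$, and in this setting the $\pi_1(A)$-action on $R^1\pi_*\mathcal{V}$ coincides with the standard action of the quotient on $H^1(\pi_1(U_0), V)$, which is known to be given by the formula above. The parabolic subsheaf $\mathcal{W}\subseteq R^1\pi_*\mathcal{V}$ is preserved by the algebraic check from the previous paragraph, yielding the lemma; a more hands-on alternative is to trivialize the configuration over a tubular neighbourhood of $\beta$ and unwind the resulting clutching automorphism directly, which produces the same formula.
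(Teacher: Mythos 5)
The paper does not prove this statement; it is quoted verbatim as Lemma 2.2 of Dettweiler--Wewers, so there is no in-paper argument to compare against. Your reconstruction is correct and follows the same route as the original source: identify the fibre of $\mathcal{W}$ with the parabolic subgroup of $H^1(\pi_1(U_0),V)$, realise the monodromy of the higher direct image as the action of $\pi_1(A)$ on group cohomology coming from the split exact sequence \eqref{VLS_ses}, and verify the cocycle identity, the independence of the lift $\tilde{\beta}$ (the two choices differ by the coboundary of $\delta(\gamma)\rho(\tilde{\beta})$, exactly as you compute), and the preservation of the parabolic condition $\delta(\gamma_i)\in\mathrm{im}(g_i-1)$, which in fact holds on the nose since $\mathrm{im}\bigl(\rho(\tilde{\beta})(\rho(\gamma_i)-1)\bigr)\subseteq\mathrm{im}(\rho(\gamma_i)-1)$. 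The only point worth flagging is that your Leray/Lyndon--Hochschild--Serre comparison computes the monodromy of $R^1\pi_*\mathcal{V}$ on $U$, whereas $\mathcal{W}=R^1\cls{\pi}_*(j_*\mathcal{V})$ lives on $X$; one needs the (standard, monodromy-equivariant) identification of the latter with the parabolic subsheaf of the former, which you do acknowledge.
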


In other words, if we have a family of local systems $\mathcal{V}_a$ parameterized by $A$, then the parabolic cohomology groups of the local systems $\mathcal{V}_a$ fit together in the form of a local system on the space $A$. 
The monodromy of this local system is calculated in terms of the short exact sequence \eqref{VLS_ses}, and the monodromy representation of the initial local system $\mathcal{V}_{a_0}$.

The local systems that we work in this paper come equipped with bilinear pairings
$$\mathcal{V}\times\mathcal{V}\to R.$$
As explained in \cite{dettweiler_variation_2006-1}, this pairing gives rise to a pairing on the parabolic cohomology local system
$$\mathcal{W}\times\mathcal{W}\to R.$$
We remark that if the pairing on $\mathcal{V}$ is $(-1)^i$-symmetric, then the induced pairing on parabolic cohomology is $(-1)^{i+1}$-symmetric \cite{dettweiler_variation_2006-1}.

\subsection{Hodge Structures on Parabolic Cohomology}

We are most interested in the local systems that arise from geometry. 
Specifically, we will be studying \emph{variations of Hodge structures}.
We review some preliminaries on the subject, and refer the reader to the  survey paper \cite{laza_introduction_2015}, and the text \cite{carlson_period_2017}, for more details.

\begin{definition}
	A (pure) Hodge structure of weight $n\in\mathbf{Z}$, denoted by $(H_\mathbf{Z},H^{p,q})$ is a finitely generated abelian group $H_\mathbf{Z}$ together with a decomposition of the complexification:
	$$\mathbf{H}_\mathbf{C}=\bigoplus_{p+q=n}H^{p,q}$$
	satisfying $H^{p,q}=\cls{H^{q,p}}$. 
	
		A \emph{polarized hodge structure of weight $n$} consists of an (integral) hodge structure of weight $n$, together with a non-degenerate bilinear form $Q$ on $H_\mathbf{Z}$ which, when extended to $\mathbf{H}_\mathbf{C}$, enjoys the following properties:
	\begin{itemize}
	\item $Q$ is $(-1)^n$-symmetric;
	\item $Q(\xi,\eta)=0$ for $\xi\in H^{p,q}$ and $\eta\in H^{p',q'}$ with $p\neq q'$;
	\item $(-1)^{\frac{n(n-1)}{2}}i^{p-q}Q(\xi,\cls{\xi})>0$ for $\xi\neq 0\in H^{p,q}$.
\end{itemize}
\end{definition}

Equivalent to the above Hodge decomposition is the Hodge filtration. 
This is a finite decreasing filtration $\{F^p\}$ of $H_\mathbf{C}$ 
$$H_\mathbf{C}\supset\cdots\supset F^p\supset F^{p+1}\supset \cdots,$$ such that
$$H_\mathbf{C}\cong F^p\oplus\cls{F^{n-p+1}}.$$
Given the Hodge decomposition, we obtain the filtration by setting
$$F^p:=\bigoplus_{i\geq p} H^{i,n-i};$$ given the filtration, we recover the decomposition by setting
$$H^{p,q}:=F^p\cap\cls{F^q}.$$
The filtration perspective is a useful reformulation as it varies holomorphically in families \cite{laza_introduction_2015}. 

As is well-known, the $n$-th cohomology group of a K\"{a}hler manifold admits a polarized Hodge structure of weight $n$, with the polarization being induced by the K\"{a}hler form. When $n$ is equal to the (complex) dimension of the manifold, the polarization agrees with that of the cup-product pairing. 
Now suppose we have a family $f\colon X\to T$ of K\"{a}hler manifolds. 
As $t\in T$ varies, so do the cohomology groups $H^n(X_t,\mathbf{Z})$ and their Hodge structures, giving rise to a variation of Hodge structure.

More precisely: let $T$ be a complex manifold and $\mathcal{E}_\mathbf{Z}$ a local system of finitely generated free $\mathbf{Z}$-modules on $T$ and set $\mathcal{E}:=\mathcal{E}_\mathbf{Z}\otimes\mathcal{O}_T$. 
Then $\mathcal{E}$ is a complex vector bundle and is equipped with the Gauss-Manin connection $\nabla\colon\mathcal{E}\to\mathcal{E}\otimes\Omega^1_T$ induced by $d\colon\mathcal{O}_B\to\Omega^1_T$. 
Let $\{\mathcal{F}^p\}$ be a filtration by sub-bundles.

\begin{definition}
	The data $(\mathcal{E}_\mathbf{Z},\mathcal{F})$ defines a \emph{variation of Hodge structure of weight $n$} on $B$ if
	\begin{itemize}
		\item $\{\mathcal{F}^p\}$ induces Hodge structures on weight $n$ on the fibres of $\mathcal{E}$;
		\item if $s$ is a section of $\mathcal{F}^p$ and $\zeta$ is a vector field of type $(1,0)$, then $\nabla_{\zeta}s$ is a section of $\mathcal{F}^{p-1}$ (Griffiths transversality). 
	\end{itemize}
	Furthermore, if $\mathcal{E}_\mathbf{Z}$ carries a non-degenerate bilinear form $Q\colon\mathcal{E}_\mathbf{Z}\times\mathcal{E}_\mathbf{Z}\to\mathbf{Z}$, we have a \emph{polarized variation of Hodge structures} if
	\begin{itemize}
		\item $Q$ defines a polarized Hodge structure on each fibre;
		\item $Q$ is flat with respect to $\nabla$; that is, we have
			$$d Q(s,s')=Q(\nabla s,s')+Q(s,\nabla s').$$
	\end{itemize}
\end{definition}

Given a family $f\colon X\to T$ of K\"{a}hler manifolds, the pair $(R^nf_*\mathbf{Z},\mathcal{F})$, where $\mathcal{F}$ is the holomorphic bundle with fibre equal to $F^pH^n(X_t,\mathbf{C})$, forms a variation of Hodge structure of weight $n$. 
The fibre-wise polarization gives rise to a polarization on $R^nf_*\mathbf{Z}$, and so we in fact have a polarized variaion of Hodge structures.

A variation of Hodge structure is an integral local system; 
 we may therefore consider the associated parabolic cohomology. 
Results of Zucker show that this group can be given a Hodge structure in the case where the base of the family is a curve. 
More precisely, he proves the following theorem
\begin{theorem*}[Theorem 7.12 \cite{zucker_hodge_1979}]
	Let $T$ be a non-singular algebraic curve over $\mathbf{C}$, $\cls{T}$ its smooth completion, $j\colon T\to\cls{T}$ the inclusion, and $\mathcal{V}$ a local system of complex vector spaces underlying a polarizable variation of Hodge structure of weight $m$. 
	There is a natural polarizable Hodge structure of weight $m+i$ on $H^i(\cls{S},j_*\mathcal{V})$ associated to the variation of Hodge structure. 
\end{theorem*}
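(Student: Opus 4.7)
The plan is to prove Zucker's theorem by realizing $H^i(\overline{T}, j_*\mathcal{V})$ as the cohomology of an $L^2$-complex, on which a Hodge decomposition can be constructed via harmonic theory. First, I would equip $T = \overline{T} \setminus \Sigma$ (where $\Sigma$ is the finite puncture set) with a complete Kähler metric that looks like the Poincaré metric $\frac{dz\,d\bar z}{|z|^2(\log|z|)^2}$ near each puncture, and equip the flat bundle $\mathcal{V} \otimes \mathcal{O}_T$ with the Hodge metric coming from the polarization and the Hodge decomposition on each fibre. One then forms the complex $\mathcal{L}^\bullet_{(2)}(\mathcal{V})$ of $\mathcal{V}$-valued smooth forms that are $L^2$ together with their exterior derivative, and defines $L^2$-cohomology $H^i_{(2)}(T, \mathcal{V})$ as its cohomology.

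The first key step is a sheaf-theoretic identification
\begin{equation*}
H^i_{(2)}(T,\mathcal{V}) \;\cong\; H^i(\overline{T}, j_*\mathcal{V}).
\end{equation*}
This is local on $\overline{T}$, so one reduces to computing the stalks of the sheafified $L^2$-complex on a small disk $\Delta$ centered at a puncture. Using Schmid's nilpotent orbit theorem, the monodromy $T$ around the puncture is quasi-unipotent, and the Hodge norm of a flat multivalued section $v$ grows polynomially in $\log|z|$ with exponent controlled by the weight filtration of the log-monodromy $N$. A direct calculation (essentially Zucker's local computation) then shows that the $L^2$ flat sections near the puncture are exactly those $v$ with $Nv=0$ in an appropriate sense—i.e.\ the invariants under local monodromy—which matches the stalk of $j_*\mathcal{V}$.

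The second key step is the Hodge decomposition of $L^2$-cohomology. One checks that on the complete Kähler manifold $T$ the usual Kähler identities hold for the twisted Laplacians $\Delta_d$, $\Delta_{\partial}$, $\Delta_{\bar\partial}$, and that $\Delta_d = 2\Delta_{\bar\partial}$. Combined with the Andreotti–Vesentini type statement for complete manifolds (closed and co-closed $L^2$-forms represent $L^2$-cohomology uniquely), this yields
\begin{equation*}
H^i_{(2)}(T,\mathcal{V}) \;\cong\; \mathcal{H}^i_{(2)}(T,\mathcal{V}) \;=\; \bigoplus_{p+q=i}\mathcal{H}^{p,q}_{(2)}(T,\mathcal{V}),
\end{equation*}
where the harmonic $(p,q)$-pieces inherit a further bigrading from the Hodge decomposition of $\mathcal{V}$. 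Summing weights gives a pure Hodge structure of weight $m+i$. The polarization is obtained by combining the polarization form on $\mathcal{V}$ with the cup-product/integration pairing $\int_T \cdot \wedge \cdot$ on $L^2$-forms, and the Hodge–Riemann bilinear relations on harmonic representatives are verified fibre-wise using the primitive decomposition with respect to the Lefschetz operator of the Poincaré metric.

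The main obstacle is the local analytic step: proving that the $L^2$-stalks of the complex near a puncture really compute $(j_*\mathcal{V})_p$ and no more. This is where one genuinely needs Schmid's asymptotic results to control the growth of the Hodge metric along a degeneration, and where the choice of the Poincaré-type metric is essential—any faster-shrinking metric would admit too many $L^2$ sections and one would end up computing $Rj_*\mathcal{V}$ rather than $j_*\mathcal{V}$. Once the local identification and the Kähler identities are in place, the remaining verifications (Griffiths transversality for the induced filtration, flatness of the polarization, positivity) are formal consequences of the corresponding fibrewise statements for $\mathcal{V}$.
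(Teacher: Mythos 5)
Your proposal is a correct and faithful outline of Zucker's own argument in the cited reference: the paper states this result without proof, deferring entirely to \cite{zucker_hodge_1979}, whose proof proceeds exactly as you describe---$L^2$-cohomology in the Poincar\'{e} metric, Schmid's norm estimates for the local identification with $j_*\mathcal{V}$, and K\"{a}hler identities on the complete metric for the Hodge decomposition and polarization. No gaps; the only cosmetic issue is your reuse of the letter $T$ for both the curve and the local monodromy operator.
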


As Zucker explains, when $\mathcal{V}=R^mf_*\mathbf{C}$, the sheaf $j_*\mathcal{V}$ is the sheaf of local invariant ``cycles'' and the Hodge structure is most interesting when $i=1$. 
A Hodge structure can always be placed extrinsically on $H^1(\cls{T},j_*R^mf_*\mathbf{C})$ using the Leray spectral sequence for $\cls{f}$; one of the main results of \cite{zucker_hodge_1979} is that these two Hodge structures coincide:
\begin{theorem*}[Theorem 15.5 \cite{zucker_hodge_1979}]
	The Hodge structure on $H^1(T,R^if_*\mathbf{C})$ is induced by that of $H^{i+1}(X)$.
\end{theorem*}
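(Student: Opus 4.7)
The plan is to compare two a priori different Hodge structures on $H^1(T, R^if_*\mathbf{C})$: the intrinsic one produced by the preceding Theorem 7.12 via $L^2$-cohomology with coefficients in the variation of Hodge structure $\mathcal{V}:=R^if_*\mathbf{C}$, and the extrinsic one coming from the Leray spectral sequence for $\cls{f}\colon\cls{X}\to\cls{T}$, which realises $H^1(T,R^if_*\mathbf{C})$ as a graded piece of $H^{i+1}(\cls{X},\mathbf{C})$.

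First I would consider the Leray spectral sequence
$$E_2^{p,q}=H^p(\cls{T},R^q\cls{f}_*\mathbf{C})\Rightarrow H^{p+q}(\cls{X},\mathbf{C})$$
and invoke its degeneration at $E_2$, which away from the boundary is Deligne's theorem for smooth projective morphisms and extends across the boundary by the global invariant cycle theorem together with the projectivity of $\cls{X}$. Next, the local invariant cycle theorem supplies a canonical surjection $R^i\cls{f}_*\mathbf{C}\twoheadrightarrow j_*\mathcal{V}$ on $\cls{T}$ whose kernel is supported on the finite set $\cls{T}\setminus T$; since a skyscraper sheaf has no $H^1$, the induced map $H^1(\cls{T},R^i\cls{f}_*\mathbf{C})\to H^1(\cls{T},j_*\mathcal{V})$ is an isomorphism. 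Thus $E_2^{1,i}$ is precisely the parabolic cohomology group in question, and since the Leray filtration on $H^{i+1}(\cls{X},\mathbf{C})$ is a filtration by sub-Hodge structures, its first graded piece inherits the desired pure Hodge structure of weight $i+1$.

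The substance of the theorem is to match this induced structure with the intrinsic one. I would represent each side via harmonic forms. On the intrinsic side, classes in $H^1(\cls{T},j_*\mathcal{V})$ are represented by $L^2$-harmonic forms on $T$ with values in the Hodge bundle of $\mathcal{V}$, equipped with the Poincaré metric near the punctures; these decompose into Hodge types as established earlier in Zucker's paper. On the extrinsic side, classes in $\mathrm{Gr}_L^1 H^{i+1}(\cls{X},\mathbf{C})$ are represented by $\cls{X}$-harmonic $(i+1)$-forms, and the Leray projection can be realised geometrically by contracting with vertical directions of $\cls{f}$ and assembling the resulting fibrewise cohomology classes into a form on $T$ with values in $\mathcal{V}$. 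The theorem reduces to showing that this fibrewise-restriction map sends $\cls{X}$-harmonic forms to $L^2$-harmonic sections on $T$ and preserves the Hodge bidegree.

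The hard part will be the analytic control at the punctures of $T$. One must verify that restricting an $\cls{X}$-harmonic $(i+1)$-form to $T$ yields a section lying in the Poincaré-$L^2$ space of the Hodge bundle, and conversely that an $L^2$-harmonic representative on $T$ extends smoothly across $\cls{T}\setminus T$ after pullback to $\cls{X}$. Both assertions are local analytic questions at each puncture, and their resolution rests on the asymptotic behaviour of the Hodge metric of $\mathcal{V}$: because the local monodromy of $\mathcal{V}$ is quasi-unipotent, Schmid's nilpotent orbit theorem forces the Hodge norm to grow only polylogarithmically, which is exactly the growth rate needed to equate the $L^2$-condition on $T$ with smooth extendability across $\cls{T}$. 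With that estimate in place, the two Hodge filtrations must coincide, since each is determined by the same fibrewise data together with the holomorphic structure on the Hodge bundle.
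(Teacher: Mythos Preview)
The paper does not contain a proof of this statement. It is quoted verbatim as Theorem~15.5 of \cite{zucker_hodge_1979} and used as a black box; no argument for it appears anywhere in the Doran--Kostiuk text. Consequently there is nothing in the paper to compare your proposal against.

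That said, your sketch is a reasonable outline of the shape of Zucker's actual argument: the comparison of the intrinsic $L^2$-Hodge structure on $H^1(\cls{T},j_*\mathcal{V})$ with the one induced via the Leray filtration on $H^{i+1}(\cls{X})$, together with the use of Schmid's asymptotics to control the behaviour of the Hodge norm near the punctures, is indeed the core of \cite{zucker_hodge_1979}. The main thing you gloss over is that matching harmonic representatives across the two pictures is not simply a matter of ``fibrewise restriction preserves bidegree''; Zucker works rather hard to set up the correct complexes and show that the $L^2$-complex on $T$ with Poincar\'e metric computes $j_*\mathcal{V}$-cohomology, and that the resulting filtration agrees with the one coming from the de~Rham complex on $\cls{X}$. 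Your proposal would need to be filled in substantially at that step, but the strategic outline is correct.
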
 

That is, there is an inclusion of $H^1(T,R^if_*\mathbf{Q})$ inside $H^{i+1}(X,\mathbf{Q})$ for which the Hodge structure on $H^1(T,R^if_*\mathbf{C})$ agrees with the one it inherits from the Hodge structure on $H^{i+1}(X,\mathbf{C})$.

\subsection{Geometric Variations of Local Systems}
Here we explain how to combine the notions of variations of local systems and variations of Hodge structures to produce \emph{geometric variations of local systems}, the subject of the second author's Ph.D. thesis \cite{kostiuk_geometric_2018}. 

Let $(X,D)$ be an $r$-configuration.
That is, we consider a proper morphism of complex manifolds $\cls{\pi}\colon X\to A$, together with a smooth relative divisor $D\ins X$ for which each fibre $X_a$ is isomorphic to $\mathbf{P}^1_\mathbf{C}$, and $D_a=D\cap X_a$ consists of $r$ distinct points; let $U=X-D$ and set $U_a=X_a-D_a\cong\mathbf{P}^1-\{\textrm{$r$ points}\}$ for each $a\in A$.

\begin{definition}
A \emph{geometric variation of local systems over the $r$-configuration $(X,D)$} is a variation of local systems $\mathcal{V}$ over $(X,D)$ satisfying the following conditions:
	\begin{enumerate}
		\item the local system $\mathcal{V}$ is a polarized variation of Hodge structures of weight $n$ over $A$;
		\item  for each $a\in A$, the restriction $\mathcal{V}_a$ is itself a polarized variation of Hodge structures of weight $n$ over $U_a$;
		\item the parabolic cohomology $\mathcal{W}$ is a polarized variation of  Hodge structures of weight $n+1$. 
	\end{enumerate}
	
\end{definition}

\begin{definition}\label{Def:GVLS}
Let $(X,D)$ be an $r$-configuration and $U=X-D$. 
A family of K\"{a}hler manifolds \emph{over the $r$-configuration $(X,D)$} is a family $f\colon\mathcal{X}\to U$ of K\"{a}hler manifolds. 
Let $\mathcal{X}\to A$ be a family of K\"{a}hler manifolds. An \emph{internal fibration structure} is an $r$-configuration $(X,D)$, together with a morphism $f\colon\mathcal{X}\to U$ compatible with $\mathcal{X}\to A$. 
We refer to the fibration $\mathcal{X}\to A$ (with fibres $\mathcal{X}_a$) as the \emph{external fibration}; the \emph{internal fibrations} $f_a\colon\mathcal{X}_a\to U_a$ have fibres $\mathcal{X}_{a,x}$ of codimension one in $\mathcal{X}_a$. 

$$\xymatrix{
\mathcal{X}_{x,a}\subseteq\mathcal{X}_{a}\ar[r]\ar^{f_a}[d]&\mathcal{X}\ar@{=}[r]\ar^f[d]&\mathcal{X}\supseteq\mathcal{X}_a\ar[d]\\
x\in U_a\ar[r]&U\ar^{\pi}[r]& A\ \rotatebox[origin=c]{180}{\(\in\)} a}$$
\end{definition}

\begin{proposition}
Let $(X,D)$ be an $r$-configuration, and let $f\colon \mathcal{X}\to U\to A$ be a family of K\"{a}hler manifolds with internal fibration structure. 
The polarized variation of Hodge structure $\mathcal{V}=R^nf_*\mathbf{Z}$ on $U$ defines a geometric variation of local systems over $A$.
The polarized variation of Hodge structure corresponding to the parabolic cohomology $\mathcal{W}$ is a sub-variation of $R^{n+1}(\pi\circ f)_*\mathbf{Z}$.
\end{proposition}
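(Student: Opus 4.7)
The plan is to verify the three conditions in the definition of a geometric variation of local systems and simultaneously realize $\mathcal{W}$ as a sub-VHS of $R^{n+1}(\pi\circ f)_*\mathbf{Z}$. Conditions (1) and (2) follow immediately from the classical fact that a smooth proper family of compact K\"ahler manifolds carries a polarized variation of Hodge structure on its $n$-th higher direct image. Applied to $f\colon\mathcal{X}\to U$ this gives condition (1) for $\mathcal{V}=R^nf_*\mathbf{Z}$; restricting along $U_a\hookrightarrow U$ identifies $\mathcal{V}|_{U_a}$ with $R^n(f_a)_*\mathbf{Z}$ for the internal fibration $f_a\colon\mathcal{X}_a\to U_a$, and the same result yields condition (2).

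For condition (3) I would first equip each fibre of $\mathcal{W}$ with a Hodge structure using Zucker and then assemble these into a global variation. Since $U_a$ is an open curve with smooth completion $X_a\cong\mathbf{P}^1_{\mathbf{C}}$ and $\mathcal{V}_a$ underlies a polarizable VHS of weight $n$, Theorem 7.12 of \cite{zucker_hodge_1979} endows $\mathcal{W}_a=H^1(X_a,(j_a)_*\mathcal{V}_a)$ with a natural polarized Hodge structure of weight $n+1$. By Theorem 15.5 of \cite{zucker_hodge_1979}, this Hodge structure is induced by a canonical embedding $\mathcal{W}_a\hookrightarrow H^{n+1}(\mathcal{X}_a,\mathbf{C})$ arising from the Leray spectral sequence of $f_a$.

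To globalize, observe that $\pi\circ f\colon\mathcal{X}\to A$ is proper with compact K\"ahler fibres $\mathcal{X}_a$, so $R^{n+1}(\pi\circ f)_*\mathbf{Z}$ is itself a polarized VHS of weight $n+1$ on $A$. The relative Leray spectral sequence for the factorization $\mathcal{X}\xrightarrow{j\circ f} X\xrightarrow{\cls{\pi}} A$ produces an edge morphism of local systems $\mathcal{W}=R^1\cls{\pi}_*(j_*\mathcal{V})\to R^{n+1}(\pi\circ f)_*\mathbf{Z}$ whose fibre over $a\in A$ is exactly the Zucker embedding above; in particular it is injective after tensoring with $\mathbf{Q}$, realizing $\mathcal{W}\otimes\mathbf{Q}$ as a sub-local system of the target. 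Pulling back the Hodge filtration and polarization through this embedding equips $\mathcal{W}$ with a sub-VHS structure of weight $n+1$ that recovers the Zucker Hodge structure on every fibre, proving condition (3) and the sub-VHS assertion simultaneously.

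The main obstacle is the coherence step: one must show that the fibrewise Zucker Hodge structures genuinely assemble into a variation, or equivalently that the relative Leray edge morphism in the last paragraph is compatible with Zucker's fibrewise comparison of Theorem 15.5. Once this compatibility is secured, Griffiths transversality, rationality, and the polarization on $\mathcal{W}$ are inherited automatically from the ambient VHS on $R^{n+1}(\pi\circ f)_*\mathbf{Z}$, and the proposition follows.
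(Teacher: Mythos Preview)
Your proposal is correct and follows essentially the same approach as the paper: verify conditions (1) and (2) by restricting the VHS along the internal fibrations, invoke Zucker's Theorems 7.12 and 15.5 to put the weight $n+1$ Hodge structure on each $\mathcal{W}_a$ compatibly with the embedding into $H^{n+1}(\mathcal{X}_a)$, and then globalize by realizing $\mathcal{W}$ as a sub-local-system of the ambient VHS $R^{n+1}(\pi\circ f)_*\mathbf{Z}$. The paper's proof is terser and does not explicitly flag the coherence step you single out; your more careful treatment of the relative Leray edge morphism is a welcome elaboration rather than a different argument.
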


\begin{proof}
	For each $a\in A$, the local system $\mathcal{V}_a$ is equal to the local system $R^n_{f_{a,*}}\mathbf{Z}$ on $U_a\ins X_a\cong\mathbf{P}^1$, where $f_a\colon\mathcal{X}_a\to U_a$ is the restriction of $f$ to the fibre over $a\in A$. 
The local system $\mathcal{W}:=R^1\pi_*j_*\mathcal{V}$ on $A$ is the local system whose stalk at each $a\in A$ is the parabolic cohomology of $\mathcal{V}_a$. 
That is, $\mathcal{W}_a\cong R^1{j_a}_*\mathcal{V}_a$.
Each $\mathcal{W}_a$ carries a Hodge structure of weight $n+1$ by \cite{zucker_hodge_1979}. 
On the other hand, the local system $\mathcal{W}$ is contained in $R^{n+1}(\pi\circ f)_*\mathbf{Z}$, the local system whose stalks are the cohomology groups $H^{n+1}(\mathcal{X}_a,\mathbf{Z})$, and the Hodge structure on $\mathcal{W}_a$ is the same as the one induced by this inclusion. 
Since $R^{n+1}(\pi\circ f)_*\mathbf{Z}$ is a variation of Hodge structure, its restriction to $\mathcal{W}$ is also a variation of Hodge structure.  
Therefore, $\mathcal{V}$ defines a geometric variation of local systems.
\end{proof}

\section{Computing Parabolic Cohomology}\label{sec:computing}

In this section, we review the algorithms given in \cites{dettweiler_variation_2006,dettweiler_variation_2006-1} that compute the parabolic cohomology local system of a variation of local systems, and the induced bilinear pairing.
This allows to compute the piece of the external variation of polarized Hodge structure corresponding to the parabolic cohomology of an internal fibration of K\"{a}hler manifolds.

We begin with some preliminaries. 
Suppose that $\mathcal{V}$ is a local system on $U=\mathbf{P}^1-\{t_1,\cdots, t_r\}$, let $V$ denote the stalk at $t_0$, and suppose that the corresponding monodromy representation $\rho$ is given by the $r$-uple of matrices
$$\mathbf{g}=(g_1,\dots, g_r).$$
A cocycle $\delta\in H^1(\pi_1(U,x_0),V)$ is a map $\delta\colon\pi_1(U,t_0)\to V$ satisfying the \emph{cocycle condition}:
$$\delta(\alpha\beta)=\delta(\alpha)\cdot\rho(\beta)+\delta(\beta).$$
Set $v_i=\delta(\gamma_i)$; then, since $\delta(1)=0$, the following relation holds:
\begin{equation}\label{cocycle_relation}
	v_1\cdot g_2\cdots g_r+v_2\cdot g_3\cdots g_r+\cdots+v_r=0.
\end{equation}
Conversely, given an $r$-tuple of vectors $(v_1,\dots, v_r)\in V^r$ that satisfy relation \eqref{cocycle_relation}, we obtain a unique cocycle by setting $\delta(\gamma_i)=v_i$ and extending to the rest of the fundamental group using the cocycle condition. 
The cocycle is a coboundary if and only if there is $v\in V$ for which $v_i=v\cdot (g_i-1)$ holds for all $i$.

Let $H_\mathbf{g}$ and $E_\mathbf{g}$ denote the following subspaces of $V^r$:
$$\begin{array}{ll}
     H_\mathbf{g}=&\{(v_1,\dots, v_r)|\ v_i\in\textrm{image}(g_i-1),\ \textrm{and condition \eqref{cocycle_relation} holds}\}  \\
     E_\mathbf{g}=&\{(v\cdot(g_1-1),\cdots,v\cdot(g_r-1))|\ v\in V\} 
\end{array}$$
The association $\delta\mapsto (v_1,\dots, v_r)$ is an isomorphism 
$$H^1_p(U,\mathcal{V})\cong W_\mathbf{g}:=H_\mathbf{g}/E_{\mathbf{g}},$$
as shown in \cite{dettweiler_variation_2006}.
This description in terms of a quotient of free modules allows us to work with parabolic cohomology at a computational level. 

\begin{remark}
	While it is not emphasized in \cite{dettweiler_variation_2006}, it should be noted that the sheaf $\mathcal{W}$ may not, in general, be free---there may be torsion. 
	We obtain a local system of $R$-modules, in the sense of \cite{dettweiler_variation_2006}, by dividing out by the torsion subgroup. 
	The braid companion quotient $W_{\mathbf{g}}$ that one works with when implementing the algorithms in \cite{dettweiler_variation_2006} is equal to the intersection of parabolic cohomology tensored with the field of fractions and the $R$-valued cohomology group, i.e., is identified with parabolic cohomology modulo torsion. 
	This is not an issue for us because we divide out by the torsion anyway when working with Hodge structures, but we point it out because some of the parabolic cohomology groups we will work with are \emph{not} torsion-free, as we will see in the next section.
\end{remark}

Now we describe how the parabolic cohomology varies in the context of a variation of local systems. 
 Let 
$$\mathcal{O}_{r-1}=\{D'\ins\mathbf{C}|\ |D'|=r-1\}=\{D\ins\mathbf{P}^1_\mathbf{C}|\ |D|=r,\ \infty\in D\}$$
be the configuration space of $r-1$ points in the plane, or of $r$ points on the Riemann sphere with one of the points at $\infty$. 
The fundamental group $\pi_1(\mathcal{O}_{r-1},D_0)$ is known as the \emph{Artin} braid group on $r-1$ strands. 
The braid group admits standard generators $\beta_1,\dots, \beta_{r-2}$ that rotate counterclockwise, exchanging the position of $x_i,x_{i+1}$ \cite{birman_braids_1975}. 
These generators satisfy the relations
$$\beta_i\beta_{i+1}\beta_i=\beta_{i+1}\beta_i\beta_{i+1},\ \beta_i\beta_j=\beta_j\beta_i\ |i-j|>1.$$

Let
$$\mathcal{E}_r(V)=\{\mathbf{g}=(g_1,\dots, g_r)|\ g_i\in\textrm{GL}(V),\ g_1\dots g_r=1\}.$$
Then, $A_{r-1}$ acts on $\mathcal{E}_r(V)$ from the right via
$$\mathbf{g}^{\beta_i}=(g_1,\dots, g_{i+1},g_{i+1}^{-1}g_ig_{i+1},\dots, g_r).$$
Define $R$-linear isomorphisms
$$\Phi(\mathbf{g},\beta)\colon H_\mathbf{g}\to H_{\mathbf{g}^\beta},$$
by declaring
\begin{equation}\label{phi_map_formula}
	(v_1,\dots, v_r)^{\Phi(\mathbf{g},\beta_i)}=(v_1,\dots, v_{i+1},v_{i+1}(1-g_{i+1}^{-1}g_ig_{i+1})+v_ig_{i+1},\cdots, v_r),
\end{equation}
and extending to all of the braid group using the ``cocycle'' rule:
$$\Phi(\mathbf{g},\beta)\Phi(\mathbf{g}^\beta,\beta')=\Phi(\mathbf{g},\beta\beta').$$

These maps act appropriately on the submodules $E_\mathbf{g}$ and therefore induce  isomorphisms of parabolic cohomology groups:
$$\cls{\Phi}(\mathbf{g},\beta)\colon W_\mathbf{g}\to W_{\mathbf{g}^\beta}.$$
The group $\textrm{GL}(V)$ acts on $\mathcal{E}_r(V)$ on the right via global conjugation: $$\mathbf{g}^h=(h^{-1}g_1h,\dots, h^{-1}g_rh).$$
Define isomorphisms
$$\Psi(\mathbf{g},h):\left\{\begin{array}{ccc}
	H_{\mathbf{g}^h}&\to &H_\mathbf{g}\\
	(v_1,\dots,v_r)&\mapsto &(v_1\cdot h,\cdots, v_r\cdot h)
\end{array}\right..$$
Then, the
maps $\Psi(\mathbf{g},h)$ induce isomorphisms on parabolic cohomlogy:
$$\cls{\Psi}(\mathbf{g},h)\colon W_{\mathbf{g}^h}\to W_{\mathbf{g}}.$$

For the rest of this section, we make the following assumptions about the $r$-configuration $\cls{\pi}\colon X\to A$:
\begin{itemize}
	\item $X=\mathbf{P}^1_A$ is the relative projective line over $A$;
	\item the divisor $D$ contains $\{\infty\}\times A\ins\mathbf{P}^1_A$;
	\item there exists a point $a_0\in A$ such that $D_0$ is contained in the real-line.
\end{itemize}

\begin{remark}
	These assumptions are only to make computations more feasible in \cite{dettweiler_variation_2006}. 
	As they will hold in all applications in this paper, we choose to make these assumptions ourselves for clarity of exposition.
\end{remark}

Since $\{\infty\}\times A\ins D$, we can use $D_0$ as a base point for the configuration space $\mathcal{O}_{r-1}$. 
The divisor $D\ins\mathbf{P}^1_A$ gives rise to a holomorphic map $A\to\mathcal{O}_{r-1}$ by sending each $a\in A$ to $D\cap X_a$.
Let $A_{r-1}=\pi_1(\mathcal{O}_r,D_0)$ be the Artin braid group.

\begin{definition}
	Notation as above, let $\ph\colon\pi_1(A,a_0)\to A_{r-1}$ be the corresponding push-forward homomorphism on fundamental groups. 
	The map $\ph$ is called the \emph{braiding map} induced by the $r$-configuration $(X,D)$. 
\end{definition}

Recall the short exact sequence \eqref{VLS_ses}:
\begin{equation*}
\xymatrix{1\ar[r]&\pi_1(U_0,x_0)\ar[r]&\pi_1(U,x_0)\ar[r]&\pi_1(A,a_0)\ar[r]&1.}
\end{equation*}
The variation $\mathcal{V}$ corresponds to a monodromy representation $\rho\colon\pi_1(U)\to\textrm{GL}(V)$. 
Let $\rho_0\colon\pi_1(U_0)\to\textrm{GL}(V)$ denote its restriction, via the exact sequence \eqref{VLS_ses}. 
As explained in \cite{dettweiler_variation_2006}, the short exact sequence is \emph{split}, so that $\rho$ is determined by $\rho_0$ and a representation $\chi\colon\pi_1(A)\to\textrm{GL}(V)$. 
A loop $\gamma\in\pi_1(A)$ acts on the initial representation $\rho_0$ in two different ways.
First, the loop $\gamma$ lifts to a loop in $\pi_1(U)$ and acts by conjugation; this has the effect of conjugating the representation $\rho_0$ by $\chi(\gamma)^{-1}$.
On the other hand, $\ph(\gamma)\in A_{r-1}$ acts via the braid action defined above. 
These actions are compatible:
$$\mathbf{g}^{\ph(\gamma)}=\mathbf{g}^{\chi(\gamma)^{-1}}.$$
The following theorem calculates the monodromy representation \cite{dettweiler_variation_2006}*{Theorem 2.5}:
\begin{theorem*}
	Let $\mathcal{W}$ be the parabolic cohomology of $\mathcal{V}$ and $\eta\colon\pi_1(A,a_0)\to\textrm{GL}(W)$ the monodromy representation. 
	For all $\gamma\in\pi_1(A,a_0)$, we have
	$$\eta(\gamma)=\cls{\Phi}(\mathbf{g},\ph(\gamma))\cdot\cls{\Psi}(\mathbf{g},\chi(\gamma)).$$ 
\end{theorem*}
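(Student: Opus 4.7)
The plan is to bootstrap from the Dettweiler--Wewers lemma stated in the excerpt. Choose a lift $\tilde{\beta}\in\pi_1(U,x_0)$ of $\gamma\in\pi_1(A,a_0)$ using the splitting of the short exact sequence \eqref{VLS_ses}; with this canonical lift, $\rho(\tilde{\beta})=\chi(\gamma)$. The cited lemma says that the parabolic cohomology action is $[\delta]\mapsto[\delta']$ where $\delta'(\alpha)=\delta(\tilde{\beta}\alpha\tilde{\beta}^{-1})\cdot\rho(\tilde{\beta})$, so the task is to translate this abstract formula into the $r$-tuple model $H^1_p(U_0,\mathcal{V}_0)\cong W_{\mathbf{g}}$ and identify the resulting endomorphism with $\cls{\Phi}(\mathbf{g},\ph(\gamma))\cdot\cls{\Psi}(\mathbf{g},\chi(\gamma))$.

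First I would reduce to two simpler cases. Both sides of the claimed equality are multiplicative in $\gamma$: $\eta$ is a homomorphism, $\Phi$ obeys the braid cocycle rule $\Phi(\mathbf{g},\beta\beta')=\Phi(\mathbf{g},\beta)\Phi(\mathbf{g}^{\beta},\beta')$, and $\Psi$ is multiplicative in its second argument. It therefore suffices to verify the formula when (i) $\ph(\gamma)=1$, i.e.\ the loop induces no braiding, and (ii) $\ph(\gamma)=\beta_i$ is a standard Artin generator. Case (i) is immediate: $\tilde{\beta}$ centralizes each generator $\gamma_j$ of $\pi_1(U_0,x_0)$, so $\delta'(\gamma_j)=v_j\cdot\chi(\gamma)$, which is exactly $\cls{\Psi}(\mathbf{g},\chi(\gamma))$ applied componentwise.

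For case (ii), the essential geometric input is that conjugation by $\tilde{\beta}$ inside $\pi_1(U,x_0)$ realizes the Hurwitz action of $\beta_i$ on the free generators of the normal subgroup $\pi_1(U_0,x_0)$. From the braid picture one reads off $\tilde{\beta}\gamma_i\tilde{\beta}^{-1}=\gamma_{i+1}$, $\tilde{\beta}\gamma_{i+1}\tilde{\beta}^{-1}=\gamma_{i+1}^{-1}\gamma_i\gamma_{i+1}$, and $\tilde{\beta}\gamma_j\tilde{\beta}^{-1}=\gamma_j$ for $j\neq i,i+1$. Expanding $\delta$ of each image using the cocycle identity $\delta(\alpha\beta)=\delta(\alpha)\rho(\beta)+\delta(\beta)$ together with the derived identity $\delta(\alpha^{-1})=-\delta(\alpha)\rho(\alpha)^{-1}$ produces, at position $i+1$, exactly $v_{i+1}(1-g_{i+1}^{-1}g_ig_{i+1})+v_ig_{i+1}$, matching formula \eqref{phi_map_formula}; the remaining components are either $v_{i+1}$ (at position $i$) or $v_j$ (for $j\neq i,i+1$). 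Right-multiplying every component by $\rho(\tilde{\beta})=\chi(\gamma)$ then produces $\cls{\Psi}(\mathbf{g},\chi(\gamma))$, completing the identification.

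The main obstacle is the bookkeeping of sources and targets. The map $\Phi(\mathbf{g},\beta)$ goes from $H_{\mathbf{g}}$ to $H_{\mathbf{g}^{\beta}}$, while $\Psi(\mathbf{g},h)$ goes from $H_{\mathbf{g}^h}$ back to $H_{\mathbf{g}}$, so a priori the composition need not be an endomorphism. The compatibility relation $\mathbf{g}^{\ph(\gamma)}=\mathbf{g}^{\chi(\gamma)^{-1}}$ recorded in the excerpt is exactly what guarantees these intermediate spaces agree; geometrically it expresses that the same monodromy around $\gamma$ is recorded twice, once by braiding the punctures and once by conjugating the fibre representation. Once this compatibility is in hand the above calculation yields an endomorphism of $W_{\mathbf{g}}$, and equality with $\eta(\gamma)$ is immediate from the Dettweiler--Wewers cocycle formula.
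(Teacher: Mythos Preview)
The paper does not give its own proof of this statement; it is quoted directly from \cite{dettweiler_variation_2006}*{Theorem~2.5}. Your approach---bootstrap from the cocycle lemma, then identify the conjugation-by-$\tilde\beta$ piece with $\cls\Phi$ and the multiplication-by-$\rho(\tilde\beta)$ piece with $\cls\Psi$---is the standard one and is essentially how Dettweiler--Wewers argue.

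That said, your reduction step is stated loosely and contains a gap. You write ``it suffices to verify the formula when $\ph(\gamma)=1$ and when $\ph(\gamma)=\beta_i$'', but elements of $\pi_1(A,a_0)$ mapping to a single Artin generator need not exist (the braiding map $\ph$ is not surjective in general), and the asserted multiplicativity of the right-hand side in $\gamma$ requires an unstated commutation relation between $\Phi$ and $\Psi$ that you have not checked. The correct phrasing is to factor the lemma's formula as $[\delta]\mapsto[\alpha\mapsto\delta(\tilde\beta\alpha\tilde\beta^{-1})]$ followed by right-multiplication by $\chi(\gamma)$. The second factor is $\cls\Psi(\mathbf{g},\chi(\gamma))$ by definition. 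For the first, one uses that conjugation by the canonical lift $\tilde\beta$ on $\pi_1(U_0,x_0)$ realizes the Hurwitz action of $\ph(\gamma)$, and then decomposes \emph{the braid} $\ph(\gamma)$ (not $\gamma$) into Artin generators; your case~(ii) computation together with the cocycle rule for $\Phi$ then identifies this first factor with $\cls\Phi(\mathbf{g},\ph(\gamma))$. A minor point: your case~(i) claim that $\tilde\beta$ centralizes each $\gamma_j$ when $\ph(\gamma)=1$ is too strong---conjugation by $\tilde\beta$ may still be a nontrivial \emph{inner} automorphism of $\pi_1(U_0,x_0)$, which acts trivially on $W_{\mathbf g}$ but not by fixing individual cocycles.
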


\begin{remark}
	As is pointed out in \cite{dettweiler_variation_2006}, if $R$ is a field and the local system is irreducible, then the homomorphism $\chi$ is determined up to scalar multiples by the braiding map because of Schur's lemma. 
	It follows that the braiding map is enough to determine the \emph{projective monodromy} representation of parabolic cohomology. 
	In most geometric examples, the monodromy matrices we compute will lie in the subgroup of invertible integer-entry matrices. 
	Thus, the ambiguity of the projective monodromy representation is at worst given by multiplication by $-1$. 
	We can use other knowledge of the parabolic cohomology local system, such as the Picard-Fuchs equation, to pin down the representation precisely. 
	Lastly, in this setting, projectively equivalent local systems will be related by a quadratic twist. 
\end{remark}

This theorem, together with the preceding discussion, describes an algorithm to compute the monodromy representation of the parabolic cohomology of a variation of local systems, which we now summarize. 
Start with a variation of local systems $\mathcal{V}$ defined on an $r$-configuration $(X,D)$, and fix a base point $a_0\in A$ and the initial monodromy representation $\rho_0$, which corresponds to an $r$-tuple of matrix $\mathbf{g}$. 
Further, suppose that $\gamma_1,\dots, \gamma_s$ are generators for $\pi_1(A,a_0)$. 
The following steps compute the projective monodromy representation for the parabolic cohomology local system on $A$:
\begin{enumerate}
	\item construct the spaces $H_\mathbf{g},E_\mathbf{g},W_\mathbf{g}$;
	\item for each $i=1,\dots, s$, find matrices $h_i\in\textrm{GL}_r(\mathbf{C})$ for which
	$$\mathbf{g}^{\ph(\gamma_i)}=\mathbf{g}^{h_i^{-1}};$$
	\item compute the transformations $\Phi(\mathbf{g},\ph(\gamma_i))$ and $\Psi(\mathbf{g},h_i)$;
	\item the projective monodromy is given by $\eta(\gamma_i)=\cls{\Phi}(\mathbf{g},\ph(\gamma_i))\cdot\cls{\Psi}(\mathbf{g},h_i)$
\end{enumerate}

Thus, in order to compute the monodromy representation, we must know the braiding map $\ph$ and the representation $\chi$. 
As explained above, if the local systems are irreducible, then $\chi$ is determined up scalar multiples by the braiding map. 
Therefore, the determination of the braiding map is the most non-trivial component of this algorithm. 

If the local systems are equipped with a non-degenerate bilinear form, there is a natural non-degenerate cup-product  pairing on the parabolic cohomology $\mathcal{W}$.  
Dettweiler-Wewers give an explicit formula for this cup-product pairing \cite{dettweiler_variation_2006-1}*{Theorem 2.5}, which we adapt to our specific needs:
\begin{theorem*}
	Suppose that $\mathcal{V}_0$ is equipped with the non-degenerate bilinear pairing $$\gen{-,-}_{\mathcal{V}_0}\colon\mathcal{V}_0\times\mathcal{V}_0\to R.$$
	Let $\delta_1,\delta_2\colon\pi_1(U)\to V$ be two parabolic cocycles determined by $\delta_j(\alpha_i)=v_i^j$. Set $w_1^j=v_1^j$ and define recursively:
		$$w_i^j=v_i^j+w_{i-1}^jg_{i},\ i=2,\dots, r.$$
	Choose $u_i\in V$ satisfying 
	$$w_i^2-w_{i-1}^2=u_i(g_i-1),\ i=1,\dots, r.$$
	Then, the bilinear pairing $\gen{-,-}_\mathcal{W}\colon\mathcal{W}\times\mathcal{W}\to R$ on $\mathcal{W}$ is given by the following formula:
	$$\gen{\delta_1,\delta_2}_\mathcal{W}=\sum_{i=1}^r\gen{w_i^1-w_{i-1}^1,u_i-w^2_{i-1}}_{\mathcal{V}_0}.$$
\end{theorem*}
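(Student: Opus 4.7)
The plan is to derive the formula by an explicit \v{C}ech computation of the cup product that defines $\gen{-,-}_\mathcal{W}$. By construction the pairing on $\mathcal{W}$ is the composition
$$H^1_p(U,\mathcal{V})\otimes H^1_p(U,\mathcal{V})\xrightarrow{\cup} H^2(X,j_*\mathcal{V}\otimes j_*\mathcal{V})\to H^2(X,j_*(\mathcal{V}\otimes\mathcal{V}))\xrightarrow{\gen{-,-}_{\mathcal{V}_0}} H^2(X,R)\xrightarrow{\sim} R,$$
with the last map the trace isomorphism for $X\cong\mathbf{P}^1$. The task is to evaluate this composite on cocycles encoded by $\delta_j(\gamma_i)=v_i^j$, and to match the outcome with the stated sum.

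First I would fix an open cover of $X$ adapted to the configuration. Since we have assumed $D_0\ins\mathbf{R}\cup\{\infty\}$, the points $x_1,\dots,x_r$ sit on the real axis; take two open sets $X_{\pm}$ whose intersection deformation retracts onto a union of $r+1$ contractible arcs separated by the $x_i$, with the base point $t_0$ lying on one of them. On each such contractible piece, $j_*\mathcal{V}$ is constant, and parallel transport along the real line identifies its sections with a single stalk $V$. In this model, a parabolic class is represented by a $\check{C}$ech $1$-cochain whose value on the $i$-th arc is $w_i^j$, for iterating the cocycle rule $\delta(\alpha\beta)=\delta(\alpha)\rho(\beta)+\delta(\beta)$ yields $\delta_j(\gamma_1\cdots\gamma_i)=w_i^j$. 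The parabolic condition $v_i^j\in\textrm{im}(g_i-1)$ is precisely what allows the cocycle to extend across the puncture $x_i$ as a section of $j_*\mathcal{V}$, and the extension is governed by a local primitive, which for $\delta_2$ is an element $u_i\in V$ satisfying $(u_i-w_{i-1}^2)(g_i-1)=v_i^2$.

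The \v{C}ech cup product then collapses to a sum of local contributions, one at each puncture. At $x_i$, the first factor contributes the jump $w_i^1-w_{i-1}^1$ of the first cochain across $x_i$, while the second factor contributes the parabolic primitive $u_i-w_{i-1}^2$ of the second cochain at $x_i$; evaluating $\gen{-,-}_{\mathcal{V}_0}$ on these and summing over $i$ yields the stated formula. Independence of the choices of $u_i$ (each determined only modulo $\ker(g_i-1)$) and of cocycle representative modulo $E_\mathbf{g}$ is verified by a telescoping argument: an adjustment of $u_i$ by $k_i\in\ker(g_i-1)$ pairs trivially against $w_i^1-w_{i-1}^1$ (since this jump lies in $\textrm{im}(g_i-1)$, the orthogonal complement of $V^{g_i}$ under the pairing when restricted appropriately), and a coboundary contribution to $\delta_1$ telescopes to a multiple of $w_r^1=0$, using the global cocycle relation \eqref{cocycle_relation}.

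The main obstacle is the bookkeeping in the \v{C}ech model: one must align the right action of $\pi_1(U_0,x_0)$, the counter-clockwise ordering of the $\gamma_i$, and the local orientations at each $x_i$ so that the cup product produces the precise ``jump paired with primitive'' contributions in the correct order and with no spurious signs. Once these conventions are pinned down, the formula is a direct unwinding of Poincar\'e--Lefschetz duality on the punctured sphere with coefficients in $j_*\mathcal{V}$, which is the content of \cite{dettweiler_variation_2006-1}*{Theorem 2.5}; our statement is the specialization adapted to the normalization $g_1\cdots g_r=1$ and to cocycles recorded by their values $v_i^j=\delta_j(\gamma_i)$ on the standard generators.
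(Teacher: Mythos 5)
The paper does not actually prove this statement: it is imported verbatim (with notation adjusted) from Dettweiler--Wewers, and the text around it says only that their Theorem 2.5 is being ``adapted to our specific needs.'' So there is no in-paper argument to compare yours against; the relevant comparison is with the cited source, whose derivation is indeed the \v{C}ech/duality computation you describe.

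Your sketch is correctly oriented and the algebraic bookkeeping checks out: the identity $w_i^j=\delta_j(\gamma_1\cdots\gamma_i)$ follows from the cocycle rule, the existence of $u_i$ follows from $w_i^2-w_{i-1}^2=v_i^2+w_{i-1}^2(g_i-1)$ together with parabolicity of $\delta_2$, and the orthogonality of $\mathrm{im}(g_i-1)$ and $\ker(g_i-1)$ (which you need for independence of the choice of $u_i$) holds because the pairing on $\mathcal{V}_0$ is monodromy-invariant --- a hypothesis you use implicitly and should state. However, the decisive step, namely that the \v{C}ech cup product localizes at the punctures and that the local contribution at $x_i$ is exactly $\gen{w_i^1-w_{i-1}^1,\,u_i-w_{i-1}^2}_{\mathcal{V}_0}$ with no additional terms or signs, is asserted rather than computed; you acknowledge as much and then defer to \cite{dettweiler_variation_2006-1}*{Theorem 2.5}, which is the very statement being proved. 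As written, then, your argument is a plausibility account of the formula plus a citation --- exactly the posture the paper itself takes --- rather than an independent proof. If you want a self-contained argument, you must carry out the local evaluation: write down the \v{C}ech representatives on the two-set cover, compute the $2$-cocycle $\delta_1\cup\delta_2$ on the triple overlaps (equivalently, on small loops about each $x_i$), and identify its image under the trace map; the well-definedness checks you outline would then be genuine corollaries rather than consistency checks on an unproved formula.
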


\section{Parabolic Cohomology of Elliptic Surfaces}\label{sec:parabolic_cohomology_of_elliptic}

\subsection{Elliptic Surface Preliminaries}
In this section we review some of the basic facts about elliptic surfaces. 
\begin{definition}
	An \emph{elliptic surface} $\mathcal{E}$ over $S$ is a smooth projective surface $\mathcal{E}$ with an elliptic fibration over $S$, i.e., a surjective map
	$$f\colon \mathcal{E}\to S$$
	for which 
	\begin{itemize}
		\item all but finitely many fibres are smooth curves of genus $1$;
		\item no fibre contains an exceptional curve of the first kind.
	\end{itemize}
	
	A \emph{section} of an elliptic surface $f\colon \mathcal{E}\to S$ is a morphism
	$$\sigma\colon S\to \mathcal{E},\ \textrm{for which}\  f\circ\sigma=\textrm{id}_S.$$
\end{definition}

\begin{remark}
	All of the elliptic surfaces in this paper will be assumed to have a section. 
\end{remark}
Since we are only dealing with complex elliptic surfaces with section, we can always choose a Weierstrass presentation of $\mathcal{E}$:
\begin{equation}\label{Weierstrass_Model}
	y^2=4x^3-g_2(t)x-g_3(t),\ g_2,g_3\in K(S).
\end{equation}
The fibres of \eqref{Weierstrass_Model} are smooth elliptic curves as long as the discriminant $\Delta=g_2^3-27g_3^2$ does not vanish. 
For each $t\in S$ such that $\Delta(t)=0$, the fibre is either a cuspidal or nodal rational curve, and the singular point of the curve may or may not be a surface singularities of \eqref{Weierstrass_Model}. 
If the singular point of the fibre is a surface singularity, then we perform a sequence of blow-ups to resolve the singularity. 
Doing this for each singular fibre, we arrive at the N\'{e}ron model, which is a smooth surface with elliptic fibration whose singular fibres are chains of rational curves. 

In order to classify the the kinds of chains of rational curves that occur, i.e., classify the kinds of singular fibres that can occur, Kodaira considered the following two invariants associated to an elliptic surface \cite{kodaira_compact_1960}:
\begin{definition}
Let $\mathcal{E}\to S$ be an elliptic surface, $\Sigma$  the support of the singular fibres, and $S_0=S-\Sigma$.  
	The \emph{functional invariant} $\mathcal{J}$ is the rational function on $S$ whose value at $t\in S_0$ is the $J$-invariant of the fibre $E_t$ at $t$. 
	In terms of the Weierstrass form \eqref{Weierstrass_Model}, we have
	$$\mathcal{J}=\frac{g_2^3}{g_2^3-27g_3^2}.$$ 
	
	The sheaf $\mathcal{V}:=R^1f_*\mathbf{Z}|_{S_0}$, is  called the \emph{homological invariant}; it is characterized by its monodromy representation
	$$\rho\colon\pi_1(X_0)\to\textrm{SL}_2(\mathbf{Z}),$$
	and we will often refer to the representation as the homological invariant. 
\end{definition}

The classification of Kodaira is described in terms of the possible local monodromy transformation around the singular fibre. 
In order to compute the type of singular fibre, we only need to know the order of vanishing of $g_2,g_3$, and $\Delta$ at the singular fibre. 
The classification is tabulated in Table \ref{Table:KodairaClass}.
Of the possible singular fibre types on an elliptic surface, those of type $\textrm{I}_N$ are called \emph{multiplicative} fibres, while all other singular fibre types are called \emph{additive} (the terminology comes from the kind of singular curve obtained at this particular point). 

\begin{table}
\centering
\begin{tabular}{||c|c|c|c|c|c||}
\hline
Type&$\nu(g_2)$&$\nu(g_3)$&$\nu(\Delta)$&Graph&Monodromy\\[0.5ex]
\hline\hline
$\textrm{I}_0$&$a\geq 0$&$b\geq 0$&$0$&$-$&$\begin{pmatrix}
	1&0\\
	0&1
\end{pmatrix}$\\
\hline
$\textrm{I}_N$&$0$&$0$&$N\geq 1$&$\tilde{A}_N$&$\begin{pmatrix}
	1&N\\
	0&1
\end{pmatrix}$\\
\hline
$\textrm{II}$&$a\geq 1$&$1$&$2$&$-$&$\begin{pmatrix}
	1&1\\
	-1&0
\end{pmatrix}$\\
\hline
$\textrm{III}$&$1$&$b\geq 1$&$3$&$\tilde{A}_1$&$\begin{pmatrix}
	0&1\\
	-1&0
\end{pmatrix}$\\
\hline
$\textrm{IV}$&$a\geq 2$&$2$&$4$&$\tilde{A}_2$&$\begin{pmatrix}
	0&1\\
	-1&-1
\end{pmatrix}$\\
\hline\hline
$\textrm{I}_0^*$&$a\geq 2$&$b\geq 3$&$6$&$\tilde{D}_4$&$\begin{pmatrix}
	-1&0\\
	0&-1
\end{pmatrix}$\\
\hline
$\textrm{I}_N^*$&$2$&$3$&$N+6\geq 7$&$\tilde{D}_{N+4}$&$\begin{pmatrix}
	-1&-N\\
	0&-1
\end{pmatrix}$\\
\hline
$\textrm{IV}^*$&$a\geq 3$&$4$&$8$&$\tilde{E}_6$&$\begin{pmatrix}
	-1&-1\\
	1&0
\end{pmatrix}$\\
\hline
$\textrm{III}^*$&$3$&$b\geq 5$&$9$&$\tilde{E}_7$&$\begin{pmatrix}
	0&-1\\
	1&0
\end{pmatrix}$\\
\hline
$\textrm{II}^*$&$a\geq 4$&$5$&$10$&$\tilde{E}_8$&$\begin{pmatrix}
	0&-1\\
	1&1
\end{pmatrix}$\\
\hline
\end{tabular}
\caption{Kodaira's Classification}\label{Table:KodairaClass}
\end{table}

Now we describe the weight one integral variation of Hodge structure on the elliptic surface $f\colon \mathcal{E}\to S$ associated to the homological invariant $\mathcal{V}$. 
The polarization on $\mathcal{V}$ is induced by the cup-product and if we  choose a basis $\alpha^*,\beta^*\in H^1(\mathcal{E}_t,\mathbf{Z})$ that is Poincar\'{e}-dual to the standard cycles $\alpha,\beta\in H_1(\mathcal{E}_t,\mathbf{Z})$, then the matrix of the polarization with respect to this basis is given by 
\begin{equation}
	\begin{pmatrix}
		0&1\\
		-1&0
	\end{pmatrix}.
\end{equation}

The Hodge filtration on $H^1(\mathcal{E}_t,\mathbf{C})$ is determined by a non-zero $\omega\in H^1(\mathcal{E}_t,\mathbf{C})$ that spans the filtrant $F^1$. 
If we write $\omega=z_1\alpha^*+z_2\beta^*$ in terms of the standard basis for $H^1(\mathcal{E}_t,\mathbf{Z})$, then the Hodge-Riemann relations imply that $\frac{z_2}{z_1}\in\mathfrak{h}$, and conversely.

Given the Weierstrass form \eqref{Weierstrass_Model}, there are a number of methods one can use to compute the Picard-Fuchs equations, such as the Griffiths-Dwork algorithm. 
This calculation was carried out in \cite{clingher_normal_2007} for example, where it is shown that the Picard-Fuchs equation is
\begin{equation}\label{Picard_Fuchs_EllSurface_WForm}
	\frac{d^2f}{dt^2}+P\frac{df}{dt}+Qf=0,
\end{equation}
where 
\begin{eqnarray*}
	P&=&\frac{\frac{dg_3}{dt}}{g_3}-\frac{\frac{dg_2}{dt}}{g_2}+\frac{d\mathcal{J}}{\mathcal{J}}-\frac{\frac{d^2\mathcal{J}}{dt^2}}{\frac{d\mathcal{J}}{dt}},\\
	Q&=&\frac{\left(\frac{d\mathcal{J}}{dt}\right)^2}{144\mathcal{J}(\mathcal{J}-1)}+\frac{\frac{d\Delta}{dt}}{12\Delta}\left(P+\frac{\frac{d^2\Delta}{dt^2}}{\frac{d\Delta}{dt}}-\frac{13}{12}\frac{\frac{d\Delta}{dt}}{\Delta}\right).
\end{eqnarray*}

More than just being a Fuchsian differential equation, the Picard-Fuchs equations arising from elliptic surfaces are $K$-equations, a term coined by Stiller \cite{stiller_differential_1981}.
\begin{definition}
	A second order Fuchsian ODE is called a \emph{$K$-equation} if it possesses two solutions $\omega_1,\omega_2$ which are holomorphic non-vanishing multivalued functions on a Zariski open subset $S_0\ins S$ satisfying the following conditions:
	\begin{enumerate}[label=(\roman{*})]
	\item $\omega_1,\omega_2$ form a basis of solutions;
	\item the monodromy representation of the differential equation with respect to this basis takes values in $\textrm{SL}_2(\mathbf{Z})$;
	\item $\textrm{im}(\frac{\omega_2}{\omega_1})>0$ on $S_0$ (positivity);
	\item the Wronskian lies in $K(S)$.
	\end{enumerate}
	The pair $\omega_1,\omega_2$ is called a \emph{$K$-basis}. 

\end{definition}
The Picard-Fuchs equation of an elliptic surface is a $K$-equation, with the $K$-basis being induced by the standard $a$ and $b$ cylces on the torus. 
Conversely, every $K$-equation can be realized as the Picard-Fuchs equation associated to an elliptic surface \cite{stiller_differential_1981}[Theorem II.2.5].

Let us briefly go over some of the details of how this works. 
Stiller starts by considering the elliptic surface $\mathcal{E}\to\mathbf{P}^1_t$ given by the Weierstrass presentation
\begin{equation}\label{J=t Form}
y^2=4x^3-\frac{27t}{t-1}x-\frac{27t}{t-1}.	
\end{equation}
The functional invariant in this case is $\mathcal{J}=t$, and the Picard-Fuchs equation associated to this elliptic surface is
\begin{equation}\label{Picard_Fuchs_J=t}
\frac{d^2f}{dt^2}+\frac{1}{t}\frac{df}{dt}+\frac{\frac{31}{144}t-\frac{1}{36}}{t^2(t-1)^2}f=0.
\end{equation}

An explicit $K$-basis $\Phi_1,\Phi_2$ for \eqref{J=t Form} is constructed in \cite{stiller_differential_1981} and the quotient of these solutions $\Phi=\frac{\Phi_1}{\Phi_2}$ induces a holomorphic multivalued map 
\begin{equation}
\xymatrix{\mathbf{P}^1_t-\{0,1,\infty\}\ar[r]^(0.7){\Phi}&\mathfrak{h}},	
\end{equation}
because of the positivity condition of a $K$-basis. 
In fact, $\Phi$ is an inverse to the classical modular $J$-function. 
If one makes a branch cut on $\mathbf{P}^1_t$ joining $\infty$ to $0$ along the negative real-axis and another branch cut along the interval $[0,1]$, then one can choose a single-valued branch of $\Phi$ on the slit-sphere that takes values in the usual fundamental domain for the $\textrm{SL}_2(\mathbf{Z})$-action on $\mathfrak{h}$:
$$\{\tau\in\mathfrak{h}|\ -\frac{1}{2}<\textrm{Re}(\tau)<\frac{1}{2},\ |\tau|>1\}.$$

Choosing loops $\gamma_0,\gamma_1,$ based at $i$, looping around $0$ and $1$ once,  the monodromy transformations are:
\begin{equation}\label{Monodromy:J=t}
	\begin{array}{ccc}
		\gamma_0&\mapsto &\begin{pmatrix}
			1&1\\
			-1&0
		\end{pmatrix}\\
		\gamma_1&\mapsto &\begin{pmatrix}
			0&-1\\
			1&0
		\end{pmatrix}\\
		\gamma_\infty &\mapsto &\begin{pmatrix}
			1&1\\
			0&1
		\end{pmatrix}.
	\end{array}
\end{equation}
In particular, the elliptic surface defined by \eqref{J=t Form} has singular fibres of type $\textrm{III}^*$ at $t=0$, $\textrm{II}$ at $t=1$ and $\textrm{I}_1$ at $t=\infty$, as can be seen by consulting Table \ref{Table:KodairaClass}. 

Now consider an arbitrary $K$-equation 
\begin{equation}\label{K-Equation}
	\frac{d^2f}{dt^2}+P\frac{df}{dt}Q+Rf=0,
\end{equation}
and let $\omega_1,\omega_2$ be a $K$-basis. 
The ratio of these two solutions gives rise to a multivalued map to $\mathfrak{h}$ and the composition $\mathcal{J}:=J\circ\frac{\omega_2}{\omega_1}$ is a \emph{rational} function from $S_0$ to $\mathbf{P}^1$ \cite{stiller_differential_1981}, called the \emph{functional invariant} of the $K$-equation. 

\begin{proposition*}\label{Prop:K-EquationClass}[\cite{stiller_differential_1981}]
	Given the $K$-equation \eqref{K-Equation}, there exists an algebraic function $\lambda$ satisfying $\lambda^2\in K(S)$ for which the original $K$-equation is obtained by pulling back \eqref{Picard_Fuchs_J=t} by the functional invariant $\mathcal{J}$ and then scaling by $\lambda$. 
	Explicitly, we have 
	\begin{eqnarray*}
	P&=&\frac{\left(\frac{d\mathcal{J}}{dt}\right)^2-\mathcal{J}\frac{d^2\mathcal{J}}{dt^2}}{\mathcal{J}\frac{d\mathcal{J}}{dt}}-\frac{d}{dt}\log\lambda^2,\\
	Q&=&\frac{\left(\frac{d\mathcal{J}}{dt}\right)^2\left(\frac{31}{144}\mathcal{J}-\frac{1}{36}\right)}{\mathcal{J}^2(\mathcal{J}-1)^2}-\left(\frac{\left(\frac{d\mathcal{J}}{dt}\right)^2-\mathcal{J}\frac{d^2\mathcal{J}}{dt^2}}{\mathcal{J}\frac{d\mathcal{J}}{dt}}\right)\frac{d}{dt}\log\lambda-\frac{\frac{d^2\lambda}{dt^2}}{\lambda}+2\left(\frac{\frac{d\lambda}{dt}}{\lambda}\right)^2,
\end{eqnarray*}

In particular, the Picard-Fuchs equation of an arbitrary elliptic surface can be computed this way by taking $\lambda^2=\frac{g_2}{g_3}$.

Finally, every $K$-equation is the Picard-Fuchs equation of an elliptic surface.

\end{proposition*}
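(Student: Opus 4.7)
The plan is to decompose the proposition into three parts: (i) the pullback-twist characterization with the explicit formulas for $P,Q$, (ii) the specialization $\lambda^2 = g_2/g_3$ recovering the Weierstrass Picard--Fuchs equation \eqref{Picard_Fuchs_EllSurface_WForm}, and (iii) the realization of an arbitrary $K$-equation as the Picard--Fuchs equation of some elliptic surface.

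For (i), I start with a $K$-basis $\omega_1,\omega_2$ of the given $K$-equation. The ratio $\tau = \omega_2/\omega_1$ takes values in $\mathfrak{h}$ by positivity, and $\mathcal{J} := J\circ\tau$ is single-valued on $S_0$ because $J$ is $\textrm{SL}_2(\Z)$-invariant and the monodromy of $\tau$ lies in $\textrm{SL}_2(\Z)$ by hypothesis; rationality is then the statement of \cite{stiller_differential_1981} already invoked. The pullback of \eqref{Picard_Fuchs_J=t} along $\mathcal{J}$ has basis $\Phi_j\circ\mathcal{J}$ whose ratio is again $\tau$; so the pulled-back equation and the original $K$-equation share projective monodromy and have proportional solution bases, forcing $\omega_j = \lambda\cdot(\Phi_j\circ\mathcal{J})$ for some common scalar function $\lambda$ on $S_0$. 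Comparing Wronskians, each of which lies in $K(S)$—the pulled-back one because $\mathcal{J}\in K(S)$ and the original by the $K$-basis axiom—gives $\lambda^2\in K(S)$.

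The explicit formulas for $P$ and $Q$ then follow from the standard transformation rules of a second-order linear ODE under a change of independent variable $u=\mathcal{J}(t)$ and a change of dependent variable $f\mapsto\lambda f$. Starting from \eqref{Picard_Fuchs_J=t} with $P_0(u)=1/u$ and $Q_0(u)=(\tfrac{31}{144}u-\tfrac{1}{36})/(u^2(u-1)^2)$, composing the two transformations and grouping logarithmic derivatives reproduces the displayed expressions. This coefficient-tracking is the principal calculation and also the main obstacle, though it is mechanical.

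For (ii), the Weierstrass form gives $\mathcal{J}=g_2^3/\Delta$ and $1-\mathcal{J}=-27g_3^2/\Delta$, so
\begin{equation*}
  d\log\mathcal{J} = 3\,d\log g_2 - d\log\Delta,\qquad d\log(\mathcal{J}-1) = 2\,d\log g_3 - d\log\Delta.
\end{equation*}
Taking $\lambda^2 = g_2/g_3$ gives $d\log\lambda^2 = d\log g_2 - d\log g_3$. Substituting these identities into the formulas from (i) and simplifying exactly recovers \eqref{Picard_Fuchs_EllSurface_WForm}. For (iii), given a $K$-equation with functional invariant $\mathcal{J}$ and twist $\lambda$, I solve the system $g_2/g_3=\lambda^2$ and $g_2^3/(g_2^3-27g_3^2)=\mathcal{J}$ for $g_2,g_3\in K(S)$, which is possible up to a common $K(S)^\times$-scaling corresponding to an isomorphism of Weierstrass models; the resulting surface, after Kodaira resolution of the singular fibres, is an elliptic surface whose functional and homological invariants match the input data, so by (ii) its Picard--Fuchs equation agrees with the original $K$-equation. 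Beyond the coefficient verification in (i), the only subtlety is checking in (iii) that the Weierstrass model admits a smooth relatively minimal resolution, which is guaranteed by Kodaira's classification summarized in Table \ref{Table:KodairaClass}.
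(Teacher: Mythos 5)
The paper does not actually prove this proposition: it is stated as a \texttt{proposition*} attributed to \cite{stiller_differential_1981} and is used as a black box (the surrounding text and the following remark only explain how to exploit it), so there is no in-paper argument to compare yours against. Judged on its own, your reconstruction is sound and follows the standard route. Part (i) is the classical projective-equivalence argument: since $\Phi$ inverts $J$, the pulled-back basis $\Phi_j\circ\mathcal{J}$ has the same solution ratio $\tau=\omega_2/\omega_1$ as the given $K$-basis (after adjusting by the element of $\mathrm{SL}_2(\mathbf{Z})$ fixing the branch), so the two equations agree up to a gauge $f\mapsto\lambda f$; the Wronskian comparison $W(\lambda f_1,\lambda f_2)=\lambda^2W(f_1,f_2)$, with $W(\Phi_1\circ\mathcal{J},\Phi_2\circ\mathcal{J})=\mathcal{J}'\cdot(W(\Phi_1,\Phi_2)\circ\mathcal{J})\in K(S)$, correctly yields $\lambda^2\in K(S)$ while only determining $\lambda$ up to sign. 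I checked the coefficient bookkeeping: substituting $u=\mathcal{J}(t)$ into $f''+u^{-1}f'+Q_0(u)f=0$ gives the leading terms $\bigl((\mathcal{J}')^2-\mathcal{J}\mathcal{J}''\bigr)/(\mathcal{J}\mathcal{J}')$ and $(\mathcal{J}')^2Q_0(\mathcal{J})$, and the gauge $g=\lambda h$ contributes exactly $-\tfrac{d}{dt}\log\lambda^2$ to $P$ and $-\tilde P\tfrac{d}{dt}\log\lambda-\lambda''/\lambda+2(\lambda'/\lambda)^2$ to $Q$, matching the displayed formulas. Two small points of looseness in (iii): the system $g_2/g_3=\lambda^2$, $g_2^3/(g_2^3-27g_3^2)=\mathcal{J}$ in fact has the explicit solution $g_3=27\mathcal{J}/\bigl(\lambda^6(\mathcal{J}-1)\bigr)$, $g_2=\lambda^2g_3$, both in $K(S)$ since $\lambda^2$ is, so the ``up to a common scaling'' caveat is unnecessary (a rescaling would change $\lambda$); and you do not need the homological invariant of the constructed surface to match anything --- part (ii) applied to this Weierstrass model already returns the original operator, which is all that ``is the Picard--Fuchs equation of an elliptic surface'' requires.
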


\begin{remark}
Proposition \ref{Prop:K-EquationClass} is useful to compute the homological invariant of an arbitrary elliptic surface in terms of the homological invariant of the well-understood elliptic surface given by \eqref{J=t Form}, as long as we understand the functional invariant. 
Indeed, the monodromy representation of the Picard-Fuchs equation is obtained as follows. 
The functional invariant induces a push-forward map
$$\mathcal{J}_*\colon\pi_1(X_0)\to\pi_1(\mathbf{P}^1-\{0,1,\infty\}).$$
Composing this map with the monodromy representation \eqref{Monodromy:J=t}, we obtain the monodromy representation for the pull back of \eqref{Picard_Fuchs_J=t} by $\mathcal{J}$. 
Scaling the solutions by $\lambda=\sqrt{\frac{g_2}{g_3}}$ has the effect of multiplying the monodromy transformations by $-1$ at the positions where $\lambda$ has a pole or zero. 
Thus, if we understand the push-forward map $\mathcal{J}_*$, as well as where the poles and zeroes of $\lambda$ lie, we can compute the homological invariant precisely. 
\end{remark}

\subsection{Algebraic Description of Parabolic Cohomology}
In this section, we describe the lattice $H^2(\mathcal{E},\mathbf{Z})$, its weight two integral Hodge structure, and the piece of $H^2(\mathcal{E},\mathbf{Z})$ that corresponding to parabolic cohomology. 
A good reference for this section is \cite{schuett_elliptic_2010}.
The N\'{e}ron-Severi group, the group of divisors modulo algebraic equivalence, embeds in $H^2(\mathcal{E},\mathbf{Z})$ modulo via the \emph{cycle-map} and is denoted $\textrm{NS}(\mathcal{E})$. 
If we assume the curve $S$ is genus $0$, as is the case for all examples in this work, then $\textrm{NS}(\mathcal{E})$ is torsion-free. 
The cup-product on $H^2(\mathcal{E},\mathbf{Z})$ coincides with the intersection pairing on $\textrm{NS}(\mathcal{E})$, and makes $\textrm{NS}(\mathcal{E})$ a sub-lattice; its rank is called the Picard number, which is denoted by $\rho(\mathcal{E})$.
According to the Hodge index theorem, $\textrm{NS}(\mathcal{E})$ has signature $(1,\rho-1)$. 
The orthogonal compliment of $\textrm{NS}(\mathcal{E})$ in $H^2(\mathcal{E},\mathbf{Z})$ is called the \emph{transcendental lattice} and is denote by $\textbf{T}(\mathcal{E})$. 
The Hodge decomposition of $H^2(\mathcal{E},\mathbf{Z})$ induces weight $2$ Hodge structures on both the N\'{e}ron-Severi lattice and the transcendental lattice. 
By the Lefschetz $(1,1)$-theorem, we have
\begin{equation}
	H^2(\mathcal{E},\mathbf{Z})\cap H^{1,1}=\textrm{NS}(\mathcal{E}),
\end{equation} 
from which it follows that the $h^{2,0}(\textrm{NS}(\mathcal{E}))=0$. 
Typically, we are most interested in the Hodge structure on the transcendental lattice.

Each section $\sigma$ of the elliptic surface $\mathcal{E}\to S$ corresponds to a $K(S)$-rational point on the generic fibre, making the generic fibre an elliptic surface over the function field $K(S)$. 
If we fix one section $\sigma_0$ as an origin, then the group law on the generic fibre induces a group law on the set of sections.

\begin{definition}
	The group of sections of an elliptic surface $f\colon \mathcal{E}\to S$ is called the \emph{Mordell-Weil} group of $\mathcal{E}\to S$ and is denoted $\textrm{MW}(\mathcal{E})$.
	The \emph{narrow Mordell-Weil group}, denote by $\textrm{MW}^0(\mathcal{E})$ is the subgroup of sections that meet the zero component of every fibre. 
The narrow Mordell-Weil group is torsion-free and has finite-index in the full Mordell-Weil group \cite{schuett_elliptic_2010}.
\end{definition}

\begin{definition}
	 The subgroup of $\textrm{NS}(\mathcal{E})$ generated by the zero section, general fibre, and the components of the bad fibres is called the \emph{trivial lattice}, and is denoted by $T$.
\end{definition}

The trivial lattice decomposes as follows: 
\begin{equation}
	T=\gen{\sigma_0,F}\oplus\bigoplus_{s\in \Sigma}T_s,
\end{equation}
where $F$ denotes the class of a good fibre, $\Sigma$ denotes the set of bad fibres, and $T_s$ is the lattice generate by the fibre components \emph{not} meeting $\sigma_0$.
Thus, the rank of the trivial lattice is
$$\textrm{rank}(T)=2+\sum_{t\in \Sigma}(m_s-1),$$
where $m_s$ denotes the number of fibre components.

Since each section corresponds to a divisor on $\mathcal{E}$, we obtain a map from the Mordell-Weil group to the N\'{eron}-Severi group. 
The induced map $P\mapsto P\bmod T$ induces an isomorphism \cite{schuett_elliptic_2010}
\begin{equation}
	\textrm{MW}(\mathcal{E})\cong\textrm{NS}(\mathcal{E})/T.
\end{equation}
This allows us to compute the rank of $\textrm{MW}(\mathcal{E})$:
$$\textrm{rank}(\textrm{MW}(\mathcal{E}))=\rho-2-\sum_{t\in R}(m_t-1).$$
The trivial lattice does \emph{not} embed primitively inside $\textrm{NS}(\mathcal{E})$; the cokernel is isomorphic to the torsion subgroup of $\textrm{MW}(\mathcal{E})$.

\begin{definition}
	The \emph{essential lattice} $\textrm{L}(\mathcal{E})$ is the orthogonal compliment of the trivial lattice inside $\textrm{NS}(\mathcal{E})$. 
\end{definition}

The essential lattice is even and negative-definite of rank equal to $\rho-2-\sum_x(m_x-1)$, i.e., has rank equal to the Mordell-Weil group. 
Orthogonal projection with respect to the trivial lattice defines a map
\begin{equation}
	\xymatrix{\ph\colon \textrm{NS}_\mathbf{Q}(\mathcal{E})\ar[r]&\textrm{L}(\mathcal{E})_\mathbf{Q}}.
\end{equation} 
This map is characterized by the universal properties:
$$\ph(D)\perp T(\mathcal{E})_\mathbf{Q},\ \ \ph(D)\equiv D\bmod T(\mathcal{E})_\mathbf{Q}.$$
By restricting to $\textrm{MW}(\mathcal{E})$, we obtain a well-defined map from the Mordell-Weil group $$\textrm{MW}(\mathcal{E})\to\textrm{L}(\mathcal{E})_\mathbf{Q},$$ the kernel of which is the torsion subgroup. 
Thus, $\textrm{MW}(\mathcal{E})/\{\textrm{torsion}\}$ sits inside $\textrm{L}(\mathcal{E})_\mathbf{Q}$.

\begin{remark}
	In general, if a section $\sigma$ hits a non-zero fibre component, then its image in $\textrm{L}(\mathcal{E})_\mathbf{Q}$ will \emph{not} lie in the integral part; that is, tensoring with $\mathbf{Q}$ is necessary to define the above homomorphism.
\end{remark}

This map can be used to give the Mordell-Weil group the structure of a \emph{positive-definite} lattice by setting 
$$\gen{\sigma_1,\sigma_2}:=-\ph(\sigma_1)\cdot\ph(\sigma_2).$$
\begin{definition}
	The lattice $\textrm{MW}(\mathcal{E})/\{\textrm{torsion}\}$ is called the \emph{Mordell-Weil lattice}. 
	The sublattice $\textrm{MW}^0(\mathcal{E})$ is called the \emph{narrow Mordell-Weil lattice}. 
\end{definition}

\begin{remark}
	The lattice structure on $\textrm{MW}(\mathcal{E})$ was first discovered by Cox-Zucker in \cite{zucker_intersection_1979}. 
\end{remark}


Let us now consider the parabolic cohomology $H^1(S,j_*\mathcal{V})$ and its Hodge structure.
The torsion on $H^1(S,j_*\mathcal{V})$ is isomorphic to the torsion subgroup of the Mordell-Weil group, according to \cite{zucker_intersection_1979}.  
Modulo torsion, the parabolic cohomology group sits inside $H^2(\mathcal{E},\mathbf{Z})$, and its Hodge structure agrees with the one that is induced from this embedding, as discussed previously.
According to \cite{zucker_intersection_1979}, the Leray spectral sequence for $f\colon \mathcal{E}\to S$ degenerates at the $E_2$-level over $\mathbf{Q}$. 
One can show that it degenerates at the $E_2$-level over $\mathbf{Z}$ if the Mordell-Weil group is torsion-free \cite{zucker_intersection_1979}. 
The parabolic cohomology group can be computed in terms of the Leray spectral sequence. 
The Leray filtration on $H^2(\mathcal{E},\mathbf{Z})$ is given by 
\begin{equation}\label{Leray_Filt}
\begin{array}{lll}
	L^1&=&\textrm{ker}\left(H^2(\mathcal{E},\mathbf{Z})\to H^0(S,R^2f_*\mathbf{Z})\right)\\
	L^1/L^2&\cong &H^1(S,j_*\mathcal{V})\\
	L^2&=&\textrm{image}\left(H^2(S,\mathbf{Z})\to H^2(\mathcal{E},\mathbf{Z})\right)\cong \mathbf{Z}[F].
\end{array}
\end{equation}

Let $P$ be the image of parabolic cohomology inside $H^2(\mathcal{E},\mathbf{Z})$. Define the \emph{algebraic} and \emph{transcendental} pieces of $P$ as follows:
$$P_{\textrm{alg}}:=P\cap NS(\mathcal{E}),\ \ P_{\textrm{tr}}:=P\cap\mathbf{T}(\mathcal{E}).$$
The following proposition describes how these lattices relate to each other. 
\begin{proposition}\label{Prop:EllipticSurfacePCohomStructure}
The parabolic cohomology lattice $P$ is equal to the orthogonal compliment of $T$ inside $H^2(\mathcal{E},\mathbf{Z})$:
$$P=T^\perp\ins H^2(\mathcal{E},\mathbf{Z}).$$
We have $P_{\textrm{alg}}=L(\mathcal{E})$ and $P_{\textrm{tr}}=\mathbf{T}(\mathcal{E})$, and $\mathbf{T}(\mathcal{E})=L(\mathcal{E})^\perp\ins P$. There is a decomposition
$$P_\mathbf{Q}=L(\mathcal{E})_\mathbf{Q}\oplus\mathbf{T}(\mathcal{E})_\mathbf{Q};$$
this splitting holds over $\mathbf{Z}$ if $L(\mathcal{E})$ is unimodular.
In particular, the transcendental lattice is the intersection of $L(\mathcal{E})_\mathbf{Q}^\perp$ and $P$, i.e., 
$$\mathbf{T}(\mathcal{E})_\mathbf{Z}=P_\mathbf{Z}\cap L(\mathcal{E})_\mathbf{Q}^\perp.$$

\end{proposition}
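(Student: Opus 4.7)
The plan is to reduce the entire proposition to the single identification $P = T^\perp$, after which every other assertion follows from formal lattice manipulations together with the definition $\mathbf{T}(\mathcal{E}) = \textrm{NS}(\mathcal{E})^\perp \subseteq H^2(\mathcal{E},\mathbf{Z})$. Throughout I use the Leray filtration \eqref{Leray_Filt} and take for granted the modulo-torsion degeneration of the Leray spectral sequence recorded in \cite{zucker_intersection_1979}.

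For the key identification, observe that the edge map $H^2(\mathcal{E},\mathbf{Z}) \to H^0(S, R^2 f_* \mathbf{Z}) \cong \mathbf{Z}$ is cup product with the fibre class $F$, whence $L^1 = F^\perp$. Since $F \in T$, this forces $T^\perp \subseteq L^1$. Moreover $F \cdot \sigma_0 = 1$ shows $F \notin T^\perp$, so $T^\perp \cap \mathbf{Z}[F] = 0$ and the composition $T^\perp \hookrightarrow L^1 \twoheadrightarrow L^1/L^2 \cong H^1(S, j_*\mathcal{V})$ is injective. A rank count using the Shioda--Tate formula and the parabolic rank formula \eqref{rank_formula} (applied to the homological invariant, with $\dim\ker(g_i-1)$ read off from Table \ref{Table:KodairaClass}) gives $\rank T^\perp = b_2(\mathcal{E}) - \rank T = \rank H^1(S,j_*\mathcal{V})$, so this injection is surjective modulo torsion, establishing $P = T^\perp$.

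With this in hand, the remaining claims are formal. By definition $L(\mathcal{E}) = T^\perp \cap \textrm{NS}(\mathcal{E}) = P_{\textrm{alg}}$. Since $T \subseteq \textrm{NS}(\mathcal{E})$, orthogonal complementation gives $\mathbf{T}(\mathcal{E}) = \textrm{NS}(\mathcal{E})^\perp \subseteq T^\perp = P$, hence $P_{\textrm{tr}} = \mathbf{T}(\mathcal{E})$. For the identity $\mathbf{T}(\mathcal{E}) = P \cap L(\mathcal{E})_\mathbf{Q}^\perp$, the inclusion $\subseteq$ is immediate; conversely any integral $x \in P$ perpendicular to $L(\mathcal{E})_\mathbf{Q}$ is perpendicular to both $L(\mathcal{E})$ and $T$, hence to $L(\mathcal{E})_\mathbf{Q} + T_\mathbf{Q} = \textrm{NS}(\mathcal{E})_\mathbf{Q}$ (the last equality holding because the Hodge index theorem makes the pairing non-degenerate on $T_\mathbf{Q}$, whose orthogonal complement inside $\textrm{NS}(\mathcal{E})_\mathbf{Q}$ is therefore $L(\mathcal{E})_\mathbf{Q}$), so $x \in \mathbf{T}(\mathcal{E})$. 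The same non-degeneracy gives $H^2(\mathcal{E},\mathbf{Q}) = \textrm{NS}(\mathcal{E})_\mathbf{Q} \oplus \mathbf{T}(\mathcal{E})_\mathbf{Q}$, and intersecting with $P_\mathbf{Q}$ yields the orthogonal decomposition $P_\mathbf{Q} = L(\mathcal{E})_\mathbf{Q} \oplus \mathbf{T}(\mathcal{E})_\mathbf{Q}$. When $L(\mathcal{E})$ is unimodular, any primitive embedding of a unimodular lattice into an integral lattice splits off as a direct summand, giving the integral splitting $P = L(\mathcal{E}) \oplus \mathbf{T}(\mathcal{E})$.

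The main obstacle is the surjectivity in the identification $P = T^\perp$: the rank count is conceptually clean, but executing it requires combining Shioda--Tate with the parabolic rank formula and keeping track of torsion carefully, since the torsion of $H^1(S,j_*\mathcal{V})$ matches the torsion of $\textrm{MW}(\mathcal{E})$ and is absorbed by the spectral-sequence degeneration of \cite{zucker_intersection_1979}. Subtleties about non-primitivity of $T \hookrightarrow \textrm{NS}(\mathcal{E})$ are sidestepped because the integral statements in the conclusion are either rational or made under the unimodularity hypothesis on $L(\mathcal{E})$.
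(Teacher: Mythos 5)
Your write-up is best read as an expansion of the argument the paper merely gestures at: the paper's entire proof is one sentence citing the definitions, Cox--Zucker \cite{zucker_intersection_1979}, and ``basic lattice-theory,'' so your route (Leray filtration \eqref{Leray_Filt}, a rank count, then formal lattice manipulations) is the intended one, and the second half of your argument --- $P_{\textrm{alg}}=L(\mathcal{E})$, $P_{\textrm{tr}}=\mathbf{T}(\mathcal{E})$, the rational orthogonal decomposition, and the integral splitting via the retraction $P\to L(\mathcal{E})^*\cong L(\mathcal{E})$ when $L(\mathcal{E})$ is unimodular --- is correct as written. (One cosmetic point: non-degeneracy of the pairing on $T_\mathbf{Q}$ comes from the explicit shape of the trivial lattice, namely $\gen{\sigma_0,F}$ is unimodular of signature $(1,1)$ and the $T_s$ are negative-definite root lattices, not from the Hodge index theorem.)

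Two steps in the first half need repair. First, the identification $L^1=F^\perp$ is wrong whenever some fibre is reducible: $H^0(S,R^2f_*\mathbf{Z})$ is not $\mathbf{Z}$ but has rank $1+\sum_s(m_s-1)$, because $R^2f_*\mathbf{Z}$ has skyscraper contributions from the components of the reducible fibres. The correct statement is that $L^1$ is the orthogonal complement of the full fibral sublattice spanned by $F$ and the $\Theta_{s,i}$. Your containment $T^\perp\subseteq L^1$ survives, since all of these classes lie in $T$, and the rank bookkeeping actually becomes cleaner: $\operatorname{rank}L^2+\operatorname{rank}H^0(S,R^2f_*\mathbf{Z})=2+\sum_s(m_s-1)=\operatorname{rank}T$ gives $\operatorname{rank}(L^1/L^2)=b_2-\operatorname{rank}T=\operatorname{rank}T^\perp$ directly from the filtration, with no appeal to Shioda--Tate or the formula \eqref{rank_formula}. (As written, your three claims $L^1=F^\perp$, $L^1/L^2\cong H^1(S,j_*\mathcal{V})$, and $\operatorname{rank}H^1(S,j_*\mathcal{V})=b_2-\operatorname{rank}T$ are mutually inconsistent once a fibre is reducible.) Second, ``injective plus equal ranks implies surjective modulo torsion'' is not valid for maps of free $\mathbf{Z}$-modules; it only gives finite cokernel. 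Since $P$ is \emph{defined} as the image of $H^1(S,j_*\mathcal{V})$ in $H^2(\mathcal{E},\mathbf{Z})$ and a filtration subquotient is not canonically a subgroup, pinning the image down to exactly $T^\perp$ (rather than a finite-index sublattice) is the actual content of the proposition; it requires either the integral statements of \cite{zucker_intersection_1979} or an argument using that $H^2(\mathcal{E},\mathbf{Z})$ is unimodular and torsion-free together with saturation of $T^\perp$ and a discriminant comparison with the saturation of $T$. Given that the paper itself delegates exactly this point to Cox--Zucker, your proof is at the same level of rigor as the original, but you should cite that input explicitly rather than derive it from the rank count.
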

\begin{proof}
This all follows from the definitions of the lattices, the results of Cox-Zucker in \cite{zucker_intersection_1979}, and basic lattice-theory.


\end{proof}

\begin{remark}
	In particular:
	\begin{itemize}
		\item if $\mathcal{E}$ is a rational elliptic surface, then the parabolic cohomology lattice is equal to the essential lattice;
		\item if the Mordell-Weil rank is zero, then the parabolic cohomology lattice is equal to the transcendental lattice.
	\end{itemize}
\end{remark}


\begin{remark}
From this decomposition, we see that the parabolic cohomology group over $\mathbf{Q}$ breaks into one piece that does not depend on the fibration structure---the transcendental lattice---and another that \emph{does}---the Mordell-Weil lattice. 
When we study these in families, the corresponding parabolic cohomology local systems decompose into one ``extrinsic'' local system, capturing information about the transcendental data and one ``intrinsic'' local system, telling us information about the varying internal fibration structure. 
\end{remark}

\subsection{De Rham Description of Parabolic Cohomology}
In this section, we recall the de Rham description of parabolic cohomology, as given by Stiller \cite{stiller_picard_1987}. 
This gives us a dictionary between parabolic cocycles and differential 2-forms on the corresponding elliptic surface. 
Proofs of the claims in this section can be found in \cite{stiller_picard_1987}.

Let $\mathcal{E}\to T$ be an elliptic surface, and let $\Lambda$ be the associated Picard-Fuchs operator. 
We consisder inhomogenous equations of the form
$\Lambda f=Z$, where $Z$ is a rational function. 
\begin{definition}
The inhomogenous equation 
$$\Lambda f=Z,\ Z\in K(T)$$
is \emph{exact} if it has a global holomorphic solution. 
In this case, $Z=\Lambda Z'$ for some rational function $Z'$. We denote the space of exact equations by $\Lambda K(T)$. 

The above inhomogenous equation is \emph{locally exact} if for each point $p$, the equation restricted to a small open disc has a single-valued solution. The space of locally exact equation is denoted $L_\Lambda^{\textrm{para}}$.

The \emph{inhomogenous de Rham cohomology} $H^1_{\textrm{DR}}$ is the space
$$H^1_{\textrm{DR}}=L_\Lambda^{\textrm{para}}/\Lambda K(T).$$
\end{definition}

Stiller shows that $H^1_{\textrm{DR}}$ is isomorphic to the parabolic cohomology of the elliptic surface, as defined previously. 
We do not review the details of the argument here, but we will explain the dictionary. 
Choose a $K$-basis $\omega_1,\omega_2$ for the Picard-Fuchs operator $\Lambda$ and consider the inhomogenous equation 
$$\Lambda f=Z.$$
If $f$ is a solution, then analytic continuation along a loop $\gamma$ acts as
$$f^\gamma\mapsto f+m_\gamma\omega_1+n_\gamma\omega_2.$$
The map $\delta_Z(\gamma)=(m_\gamma,n_\gamma)\in\mathbf{Z}^2$ is a co-cycle and the assignment $Z\mapsto \delta_Z$ induces the dictionary between inhomogenous de Rham cohomology and the cocycle description of parabolic cohomology given earlier. 

The kinds of rational functions $Z$ that can appear as the right hand side of a locally exact equation are tightly controlled by the Picard-Fuchs operator $\Lambda$, as is the Hodge filtration of the parabolic cohomology. 
Specifically, the rational function $Z$ needs to satisfy the property that
$$\omega\frac{Z}{W}dt$$
has zero residue whenever $\omega$ is a single-valued solution to $\Lambda$ and $W$ is the Wronskian. 
Stiller describes in \cite{stiller_picard_1987}, two divisors $\mathcal{A},\mathcal{A}_0$ that depend only on $\Lambda$ for which the linear series and $L(\mathcal{A}_0)\ins L(\mathcal{A})$ determine the Hodge filtration.
Every rational function $Z\in L(\mathcal{A}_0)$ satisfies the zero-residue condition at every point and so the space $L(\mathcal{A}_0)$ can be identified with the second Hodge filtrant; the first filtrant consists of the functions in $L(\mathcal{A})$ that satisfy the zero-residue condition; this space is denoted by $L^\textrm{para}(\mathcal{A})$.
Phrased differntly, if we take  $Z\in L(\mathcal{A}_0)$ (resp. $L^\textrm{para}(\mathcal{A}))$, then it is shown in \cite{stiller_picard_1987} that the differential form
$$-\frac{Z}{W}dt\wedge\frac{dx}{y}$$
is a holomorphic $(2,0)$ (resp. $(1,1)$ form) on the elliptic surface $\mathcal{E}$. 

\begin{remark}
This last paragraph explains how to construct a global $(2,0)$ form from the family of holomorphic $1$-forms governing the fibration structure. 
Since the $(2,0)$-piece of parabolic cohomology agrees with the $(2,0)$-piece of the full Hodge structure of the total space \cite{stiller_picard_1987}, it follows that we can \emph{always} construct a basis of holomorphic $(2,0)$-forms for the total space from holomorphic families of $1$-forms on the fibres. 
\end{remark}

\begin{example}
Let $\mathcal{E}$ be the rational elliptic surface given by 
$$Y^2=4X^3+4t^4X+t.$$
Then $\mathcal{E}$ has ten $\I_1$ fibres located at the roots of $4t^{10}+27$ and a type $\textrm{II}$ fibre located at $t=0$. 
The Picard-Fuchs equation for $\mathcal{E}$ is 
\begin{equation}
    \frac{d^2f}{dt^2}+\frac{32t^{10}-54}{t(4t^{10}+27)}\frac{df}{dt}+\frac{96t^{10}-57}{4t^2(4t^{10}+27)}f=0.
\end{equation}
Note that $t=\infty$ is a singularity of the Picard-Fuchs operator, but is not a singular fibre---it is an \emph{apparent singularity}.

Following \cite{stiller_picard_1987}, the divisors $\mathcal{A}$ and $\mathcal{A}_0$ are given by
\begin{eqnarray*}
\mathcal{A}_0&=&-9(\infty)-2(0)+\sum_{i=1}^{10}(p_i)\\
\mathcal{A}&=&-4(\infty)+2(0)+\sum_{i=1}^{10}(p_i).
\end{eqnarray*}
Note that $\deg(\mathcal{A}_0)=-1$, which implies that $L(\mathcal{A}_0)=0$, reflecting the fact that $\mathcal{E}$ is rational; there are no global $(2,0)$-forms. 
On the other hand, $\mathcal{A}$ is has degree $9$. 
A general member of $L(\mathcal{A})$ looks like
$$Z=\frac{p(t)}{(4t^{10}+27)t^2},$$
where $\deg(p(t))\leq 8$. 
Following the procedure in \cite{stiller_picard_1987}, we only need to check the residue condition at the apparent singularity $t=\infty$.
One finds that $Z$ will satisfy the residue condition if and only if $p(t)=\sum_{i=0}^8a_it^i$ and $a_4=0$. 
Since the Wronksian is equal to a constant multiple of $\frac{t^2}{4t^{10}+27}$, it follows that a basis of $(1,1)$-forms is given by $$t^{i-4}dt\wedge\frac{dx}{y},\ i=0,1,2,3,5,6,7,8.$$

\end{example}

We close this section by describing how the Mordell-Weil lattice fits into the de Rham descripiton of parabolic cohomology. 
Given a section $\sigma$ of the elliptic surface, the function 
$$f(t):=\int_\mathcal{O}^\sigma\frac{dx}{y}$$
defines a multi-valued holmorphic function. Applying the Picard-Fuchs operator gives rise to a rational funcion; that is, 
$$Z_\sigma=\Lambda\int_\mathcal{O}^\sigma\frac{dx}{y}$$
is a rational function. 
In fact, $Z_\sigma$ lies in the space $L^\textrm{para}(\mathcal{A})$ and corresponds to a holomorphic $(1,1)$-form. 
The space of all such forms as $\sigma$ varies over the Mordell-Weil group exactly corresponds to the algebraic piece of parabolic cohomology described in the previous section.

\section{Middle Convolutions and Quadratic Twists}\label{sec:middle_convolution}
Variations of local systems are a generalization of the middle convolution functor on the category of local systems orginally introduced by Katz in \cite{katz_rigid_1996}.
For our technical results, the reader is referred to \cite{bogner_symplectically_2013} and for  a modern expository treatment of the middle convolution, the reader is referred to \cites{simpson_katzs_2006,dettweiler_middle_2008}.
In this section, we use the middle convolution to construct geometric variations of local systems corresponding to families of elliptic surfaces.
We begin by reviewing some theory of the middle convolution and the closely related middle Hadamard product. 
After our preparation, we describe a geometric construction of families of elliptic surfaces whose geometric variations of local systems correspond to the middle convolution. 
We then use this to calculate the integral variations of Hodge structure underlying these families.

\subsection{The Middle Convolution}
Here, we review some details about the middle convolution, most of which can be found in \cite{bogner_symplectically_2013}.
The Kummer sheaf $\mathcal{K}^{x_0}_\lambda$ for $\lambda\in\mathbf{C}^\times$ and $x\in\mathbf{C}$ is the rank $1$ local system $\mathcal{L}$ that has $\lambda$ monodromy at $x_0$ and and $\lambda^{-1}$ monodromy at $\infty$. 
Explicitly, the Kummer sheaf is the local system corresponding to the rank $1$ ODE
$$\frac{df}{dx}=\frac{\mu}{x-x_0}f(x),$$
where $\lambda=e^{2\pi i\mu}$.

Let $U_0=\mathbf{P}^1-\{x_1,\dots, x_{r-1},\infty\}$, $X=\mathbf{P}^1\times U_0$, and $\cls{\pi}\colon X\to U_0$ be the second projection. 
Then, with the divisor $D_y=\{y,x_1,\dots, x_{r-1},\infty\}$, $(X,D)$ is an $r+1$-configuration over $U_0$. 
Let $p\colon X\to\mathbf{P}^1$ be defined by
\begin{eqnarray*}
p\colon X&\to&\mathbf{P}^1\\
(x,y)&\mapsto & y-x,
\end{eqnarray*}
and let $j\colon U_0\to \mathbf{P}^1$ be the inclusion. 
Let $\mathcal{V}_0$ be a local system on $U_0=\mathbf{P}^1-\{x_1,\dots, x_{r-1},\infty\}$, and choose a basepoint $y_0\in U_0$. 
Then $\mathcal{V}:=j_*\mathcal{V}_0\otimes p^*\mathcal{K}^0_\lambda$ defines a variation of the local system $\tilde{\mathcal{V}}_0=\mathcal{V}_0\otimes\mathcal{K}^{y_0}_{\lambda}$ over the $r+1$ configuration $(X,D)$.
Note that monodromy of $\tilde{\mathcal{V}}_0$ is the same as that of $\mathcal{V}$ except at $x=y_0$, where the monodromy is given by multiplication-by-$\lambda$, and $x=\infty$, where the monodromy is now $\lambda^{-1}\cdot\rho(\gamma_\infty)$.

\begin{definition}
Notation as above, the \emph{middle convolution} $\textrm{MC}_{\lambda}(\mathcal{V}_0)$ of $\mathcal{V}_0$ with $\lambda$ is the parabolic cohomology local system of the previously defined variation of local systems. 
In particular, it is a local system on $U_0$. 
Fix a singular point $x_i$ of $\mathcal{V}_0$.
The \emph{middle Hadamard product} $\textrm{MH}_{-1}(\mathcal{V}_0)$ of $\mathcal{V}_0$ and $\lambda$ (at $x_0$) is the middle convolution of $\mathcal{V}_0\otimes\mathcal{K}^{x_i}_\lambda$ with $\lambda$.
That is, the Hadamard product of $\mathcal{V}_0$ with $\lambda$ is the same as the middle convolution of $\mathcal{V}$ with $\lambda$ after we twist the monodromy at $x=x_i$ and $\infty$. 
\end{definition}

Suppose now that $\mathcal{V}_0$ is the solution sheaf of the Fuchsian differential equation $Lf=0$ and let $\mu\in\mathbf{Q}-\mathbf{Z}$ and $p\in\mathbf{P}^1$. 
Let $\gamma_p$ and $\gamma_y$ be two simple closed loops based at $x_0$ that encircle $p$ and $y$ respectively; let $[\gamma_p,\gamma_y]=\gamma_p^{-1}\gamma_y^{-1}\gamma_p\gamma_y$ be the Pochammer contour.
If $f$ is a solution of $Lf=0$, then convolution and hadamard product of functions is defined as follows:
\begin{enumerate}
    \item $$C^p_\mu(f):=\int_{[\gamma_p,\gamma_y]}\frac{f(x)}{(y-x)^{1-\mu}}dx$$is the \emph{convolution} of $f$ and $y^\mu$ with respect to $[\gamma_p,\gamma_z]$;
    \item $$H^p_\mu(f):=\int_{[\gamma_p,\gamma_y]}\frac{f(x)}{(x-y)^\mu x^{1-\mu}}dx$$ is the \emph{Hadamard product} of $f$ and $(1-y)^{-\mu}$ with respect to $[\gamma_p,\gamma_y]$.
\end{enumerate}

According to \cite{bogner_symplectically_2013}, the middle convolution of an irreducible local system is itself an irreducible local system. 
Moreover, it is shown that if $\mathcal{V}_0$ is the solution sheaf of the Fuchsian differential equation $Lf=0$, then $C^p_\mu(f)$ is a flat section of the middle convolution sheaf. 
The analogous statements hold for the Hadamard product as well. 
Finally, an algorithm is given \cite{bogner_symplectically_2013} that computes a differential equation that annihilates the functions $C_\mu^p(f)$; it follows that if $\mathcal{V}_0$ is irreducible, then the differential operator that annihilates the middle convolution local system can be computed as a factor. 
All of this is implemented in Maple, which means that if we start with a Fuchsian differential equation, we can explicitly compute the differential operator that annihilates the middle convolution and middle Hadamard products. 

We close this section by relating the above Hadamard product to the classical Hadamard product of power series defined as
$$\sum_{n=0}^\infty a_nx^n\star\sum_{n=0}^\infty b_nx^n:=\sum_{n=0}^\infty a_nb_nx^n.$$ 
Notation as above, let $p=0$ and suppose that $f$ is a holomorphic solution to $f=0$ so that we may write
$$f=\sum_{n=0}^\infty a_nx^n,\ a_n\in\mathbf{C}.$$
The function $(1-y)^{-\mu}$ is holomorphic at $y=0$ and admits the following power series represenation:
$$\frac{1}{(1-y)^\mu}=\sum_{n=0}^\infty\frac{(\mu)_n}{n!}y^n,\ (\mu)_n:=\mu(\mu+1)\cdots(\mu+n-1).$$
Then, by applying the residue theorem and carefully considering the monodromy, one sees that
$$\frac{1}{4\pi i}H^0_\mu(f)=\sum_{n=0}^\infty\frac{a_n(\mu)_n}{n!}x^n.$$
In other words, up to a constant, $H^0_\mu(f)$ is the Hadamard product of the power series of $f$ and $(1-y)^{-\mu}$.

\subsection{Quadratic Twists and the Middle Convolution}
In \cites{besser_picard-fuchs_2012,laza_universal_2013}, Besser-Livn\'{e} consider families of elliptic surfaces obtained by starting with a fixed elliptic surface $\mathcal{E}$ and performing a quadratic twist at some fixed singular fibre and one mobile smooth fibre.
They provide an algorithm that calculates a differential equation that annihilates periods for such families and use this algorithm to compute the Picard-Fuchs equations for certain rank $19$ K3 surfaces associated to Shimura curves. 
Specifically, they consider these twisted families in the cases that the starting elliptic surface $\mathcal{E}$ is a rigid rational elliptic surface with three multiplicative fibres and one additive fibre, with the additive fibre playing the role of the fixed twisted fibre.

In this section, we revisit this construction through the lens of the middle convolution functor defined above. 
In particular, this allows us to calculate the integral variations of Hodge structure underlying the families of elliptic surfaces arising from Besser-Livn\'{e}'s constuction, as well as the corresponding Picard-Fuchs equations. 
We proceed to calculate the integral variations of Hodge structure underlying the twisted families corresponding to all 38 isolated rational elliptic surface with four fibres found in Herfurtner's classification \cite{herfurtner_elliptic_1991}.

Let $\mathcal{E}$ be any elliptic surface over $\mathbf{P}^1_t$; let $\Sigma=\{t_1,\dots, t_{r-1},\infty\}$ denote the support of the singular fibres and $U_0$ the compliment $\mathbf{P}^1-\Sigma$; denote by $\mathcal{V}_0$ the homological invariant. 
Fix a singular fibre $t_i$ and let $a\in U_0$.
Let $\mathcal{E}^{t_i}_a$ denote the elliptic surface obtained by performing a quadratic twist at $t=t_i$ and $t=a$. 
Then, $$\mathcal{E}^{t_i}_a\to U_0$$ is a family of elliptic surfaces parameterized by $U_0$.
For each fixed $a$, $\mathcal{E}^{t_i}_a$ has singular fibres at $\Sigma\cup\{a\}\ins\mathbf{P}^1_t$ and the family $\mathcal{E}^{t_i}_a$ is a family of elliptic curves over the $r+1$ configuration $(X,D)$ where $X=\mathbf{P}^1\times U_0$ and $D_a=\{a,t_1,\dots, t_{r-1},\infty\}$.
Let $\mathcal{V}^{t_i}$ denote the corresponding geometric variation of local system and note that
$$\mathcal{V}^{t_i}_a\cong\mathcal{V}_0\otimes\mathcal{K}_{-1}^{a}\otimes\mathcal{K}_{-1}^{t_i},$$
which is the local system obtained by twisting $\mathcal{V}_0$ at $t=t_i$ and $t=a$ by $-1$. 
It follows that the parabolic cohomology of $\mathcal{V}^{\infty}$ is equal to the middle convolution $\textrm{MC}_{-1}(\mathcal{V}_0)$; the parabolic cohomology of $\mathcal{V}^{t_i}$ is equal to the middle Hadamard product $\textrm{MH}_{-1}(\mathcal{V}_0)$.

Summing up, we have:
\begin{proposition}\label{prop:middle_convolution}
Let $f\colon\mathcal{E}\to \mathbf{P}^1_t$ be an elliptic surface with singular fibres located at $\Sigma=\{t_1,\dots, t_{r-1},t_r=\infty\}$, and let $\mathcal{V}_0=R^1f_*\mathbf{Z}$ be the homological invariant.
Then, the parabolic cohomology of the geometric variation of local systems $\mathcal{V}^{t_i}$ is is equal to the middle Hadamard product $\textrm{MH}_{-1}(\mathcal{V}_0)$; the parabolic cohomology of the geometric variation of local systems $\mathcal{V}^\infty$ is the middle convolution $\textrm{MC}_{-1}(\mathcal{V}_0)$.  
In particular, the parabolic cohomology local system $\mathcal{W}$ is irreducible. 

If $\nu$ denotes the number of multiplicative fibres in the fibration $f$, then the rank of $\mathcal{W}$ is  is equal to
$$r_\mathcal{W}=\left\{\begin{array}{cl}
     2r-\nu-1&\textrm{if $t_i$ is of type $\I_m, m\geq 1$}  \\
    2r-\nu-2 &\textrm{if $t_i$ is of type $\textrm{II},\textrm{II}^*,\textrm{III},\textrm{III}^*,\textrm{IV},\textrm{IV}^*$}\\
    2r-\nu-3&\textrm{if $t_i$ is of type $\I_m^*,m\geq 1$}\\
    2r-\nu-4&\textrm{if $t_i$ is of type $\I_0^*$}
\end{array}\right.$$
Finally, the Jordan forms of the local monodromy transformations of  $\mathcal{W}$ are tabulated in Tables \ref{tab:middle_conv_jordan1} and \ref{tab:middle_conv_jordani}.

\begin{table}[]
    \centering
    \resizebox{\linewidth}{!}{
    \begin{tabular}{||c|c|c||}
    \hline 
    Fibre Type&$J(M_j)$&$J(M_\infty)$\\
    \hline\hline 
        $\I_0$&$[1]\oplus\cdots\oplus[1]$&$[-1]\oplus\cdots\oplus[-1]$\\
        $\I_N$ & $[-1]\oplus[1]\oplus\cdots\oplus[1]$ &$\begin{bmatrix}-1&1&0\\0&-1&1\\0&0&-1\end{bmatrix}\oplus[-1]\oplus\cdots\oplus[-1]$\\
        
        $\textrm{II}$&$[e^\frac{2\pi i}{3}]\oplus[e^\frac{-2\pi i}{3}]\oplus[1]\oplus\cdots\oplus[1]$
        &$[e^\frac{2\pi i}{3}]\oplus[e^\frac{-2\pi i}{3}]\oplus[-1]\oplus\cdots\oplus[-1]$\\
        
        $\textrm{III}$&$[i]\oplus[-i]\oplus[1]\oplus\cdots[1]$&$[i]\oplus[-i]\oplus[-1]\oplus\cdots[-1]$\\
        
        $\textrm{IV}$&$[e^\frac{2\pi i}{6}]\oplus[e^\frac{-2\pi i}{6}]\oplus[1]\oplus\cdots\oplus[1]$&$[e^\frac{2\pi i}{6}]\oplus[e^\frac{-2\pi i}{6}]\oplus[-1]\oplus\cdots\oplus[-1]$\\
        $\I_N^*$&$\begin{bmatrix}-1&1&0\\0&-1&1\\0&0&-1\end{bmatrix}\oplus[1]\oplus\cdots\oplus[1]$&$[-1]\oplus\cdots\oplus[-1]$\\
        $\textrm{IV}^*$&$[e^\frac{2\pi i}{6}]\oplus[e^\frac{-2\pi i}{6}]\oplus[1]\oplus\cdots\oplus[1]$&$[e^\frac{2\pi i}{6}]\oplus[e^\frac{-2\pi i}{6}]\oplus[-1]\oplus\cdots\oplus[-1]$\\
        $\textrm{III}^*$&$[i]\oplus[-i]\oplus[1]\oplus\cdots[1]$&$[i]\oplus[-i]\oplus[-1]\oplus\cdots[-1]$\\
        $\textrm{II}^*$&$[e^\frac{2\pi i}{3}]\oplus[e^\frac{-2\pi i}{3}]\oplus[1]\oplus\cdots\oplus[1]$ &$[e^\frac{2\pi i}{3}]\oplus[e^\frac{-2\pi i}{3}]\oplus[-1]\oplus\cdots\oplus[-1]$\\
        \hline 
    \end{tabular}}
    \caption{Local monodromies for the family $\mathcal{E}^\infty$}
    \label{tab:middle_conv_jordan1}
\end{table}

\begin{table}[]
    \centering
    \resizebox{\linewidth}{!}{
    \begin{tabular}{||c|c|c|c||}
    \hline 
    Fibre Type&$J(M_j)$&$J(M_\infty)$&$J(M_i)$\\
    \hline\hline 
        $\I_0$&$J(M_j)$&$[-1]\oplus\cdots\oplus[-1]$&$[1]\oplus\cdots\oplus[1]$\\
        $\I_N$ &$J(M_j)$& $[1]\oplus[-1]\oplus\cdots\oplus[-1]$ &$\begin{bmatrix}1&1&0\\0&1&1\\0&0&1\end{bmatrix}\oplus[1]\oplus\cdots\oplus[1]$\\
        
        $\textrm{II}$&$J(M_j)$&$[e^\frac{2\pi i}{6}]\oplus[e^\frac{-2\pi i}{6}]\oplus[-1]\oplus\cdots\oplus[-1]$
        &$[e^\frac{2\pi i}{6}]\oplus[e^\frac{-2\pi i}{6}]\oplus[1]\oplus\cdots\oplus[1]$\\
        
        $\textrm{III}$&$J(M_j)$&$[i]\oplus[-i]\oplus[-1]\oplus\cdots[-1]$&$[i]\oplus[-i]\oplus[1]\oplus\cdots[1]$\\
        
        $\textrm{IV}$&$J(M_j)$&$[e^\frac{2\pi i}{3}]\oplus[e^\frac{-2\pi i}{3}]\oplus[-1]\oplus\cdots\oplus[-1]$&$[e^\frac{2\pi i}{3}]\oplus[e^\frac{-2\pi i}{3}]\oplus[1]\oplus\cdots\oplus[1]$\\
        $\I_N^*$&$J(M_j)$&$\begin{bmatrix}1&1&0\\0&1&1\\0&0&1\end{bmatrix}\oplus[-1]\oplus\cdots\oplus[-1]$&$[1]\oplus\cdots\oplus[1]$\\
        $\textrm{IV}^*$&$J(M_j)$&$[e^\frac{2\pi i}{3}]\oplus[e^\frac{-2\pi i}{3}]\oplus[-1]\oplus\cdots\oplus[-1]$&$[e^\frac{2\pi i}{3}]\oplus[e^\frac{-2\pi i}{3}]\oplus[1]\oplus\cdots\oplus[1]$\\
        $\textrm{III}^*$&$J(M_j)$&$[i]\oplus[-i]\oplus[-1]\oplus\cdots[-1]$&$[i]\oplus[-i]\oplus[1]\oplus\cdots[1]$\\
        $\textrm{II}^*$&$J(M_j)$&$[e^\frac{2\pi i}{6}]\oplus[e^\frac{-2\pi i}{6}]\oplus[-1]\oplus\cdots\oplus[-1]$ &$[e^\frac{2\pi i}{6}]\oplus[e^\frac{-2\pi i}{6}]\oplus[1]\oplus\cdots\oplus[1]$\\
        \hline 
    \end{tabular}}
    \caption{Local monodromies for the family $\mathcal{E}^{t_i}$}
    \label{tab:middle_conv_jordani}
\end{table}

\end{proposition}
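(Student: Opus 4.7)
\emph{Plan.} The strategy is to match the geometric variation $\mathcal{V}^{t_i}$ with the local-system input of the middle convolution, and then harvest the four assertions from Katz's structural theorems and the rank formula \eqref{rank_formula}.

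First, I would identify the parabolic cohomology with $\textrm{MC}_{-1}(\mathcal{V}_0)$ or $\textrm{MH}_{-1}(\mathcal{V}_0)$. The fibre $\mathcal{V}^{t_i}_a=R^1(f_a)_*\mathbf{Z}$ is the homological invariant of the twisted surface $\mathcal{E}^{t_i}_a$, and by the local description of quadratic twists this equals $\mathcal{V}_0\otimes\mathcal{K}^{t_i}_{-1}\otimes\mathcal{K}^a_{-1}$: each Kummer factor has monodromy $-1$ at its specified point and at $\infty$, and for finite $t_i$ the two $\infty$-twists cancel, consistent with $\infty$ remaining an unchanged singular fibre of $\mathcal{E}^{t_i}_a$. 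For $t_i=\infty$ this coincides globally with the input $j_*\mathcal{V}_0\otimes p^*\mathcal{K}^0_{-1}$ of $\textrm{MC}_{-1}$ on the $(r+1)$-configuration $(X,D)$; for finite $t_i$ it matches the definition of $\textrm{MH}_{-1}$, where one first twists $\mathcal{V}_0$ by $\mathcal{K}^{t_i}_{-1}$ and then applies $\textrm{MC}_{-1}$. Taking parabolic cohomology fibrewise then gives $\mathcal{W}\cong\textrm{MC}_{-1}(\mathcal{V}_0)$ (resp.\ $\textrm{MH}_{-1}(\mathcal{V}_0)$). Irreducibility is Katz's preservation theorem, once one knows that $\mathcal{V}_0$ is irreducible: the unipotent local monodromy at any $\I_N$-fibre pins down a unique invariant line, which cannot simultaneously be fixed by the other Kodaira local monodromies of a non-isotrivial fibration.

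Next, for the rank formula I would apply \eqref{rank_formula} to $\mathcal{V}^{t_i}_a$ over the $(r+1)$-punctured sphere $U_a$, getting
\[\dim_{\mathbf{Q}}\mathcal{W}=2(r-1)-\sum_{j=1}^{r+1}\dim_{\mathbf{Q}}\ker(g'_j-1),\]
where $g'_j$ is the (possibly twisted) local monodromy. A case check against Table \ref{Table:KodairaClass} shows: untwisted multiplicative fibres contribute $1$; the new puncture $a$ (monodromy $-I$) contributes $0$; twisted multiplicative fibres contribute $0$; twisted $\I_N^*$ contributes $1$; twisted $\I_0^*$ contributes $2$; and all other additive types, whose monodromies have eigenvalues away from $\pm 1$, contribute $0$ whether twisted or not. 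Summing over the singular fibres of $\mathcal{E}$ and substituting reproduces the four cases tabulated in the proposition.

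Finally, for the Jordan forms in Tables \ref{tab:middle_conv_jordan1} and \ref{tab:middle_conv_jordani} I would invoke Katz's explicit formula for the local monodromies of $\textrm{MC}_\lambda$, as reformulated in \cite{dettweiler_middle_2008}. With $\lambda=-1$, the block transformation rule reads: a Jordan block of size $k$ with eigenvalue $\mu\notin\{1,-1\}$ becomes a size-$k$ block with eigenvalue $-\mu$; a size-$k$ block for eigenvalue $1$ shrinks to size $k-1$ with eigenvalue $-1$; and a size-$k$ block for eigenvalue $-1$ grows to size $k+1$ with eigenvalue $1$. Applying this to each Kodaira monodromy from Table \ref{Table:KodairaClass}, remembering that at $t_i$ one first applies the Kummer twist $\mathcal{K}^{t_i}_{-1}$ (negating eigenvalues) before feeding into Katz's recipe and that the new puncture $a$ contributes $-I$ (direct sums of $[-1]$'s on $\mathcal{W}$), produces all entries of the tables. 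The main obstacle in this step is bookkeeping at $\infty$, where two Kummer twists act simultaneously and one must track how Katz's rule interacts with the pre-twist; once that is disentangled the block counts are forced to match the rank already computed, which serves as a useful internal check.
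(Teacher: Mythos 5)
Your proposal is correct and follows essentially the same route as the paper: the paper's own proof consists of two sentences, deferring the identification with $\textrm{MC}_{-1}$/$\textrm{MH}_{-1}$ to the preceding discussion of Kummer twists and deducing the rank from the formula \eqref{rank_formula}; your identification of the twisted fibrewise local system, your appeal to Katz's preservation of irreducibility, and your kernel-dimension case check are exactly the intended (unwritten) details, and all check out against Table \ref{Table:KodairaClass}.

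One substantive remark on the Jordan forms: the Dettweiler--Reiter block rule you quote ($J(-1,l)\mapsto J(1,l+1)$, $J(1,l)\mapsto J(-1,l-1)$ for $\lambda=-1$) reproduces every entry of Tables \ref{tab:middle_conv_jordan1} and \ref{tab:middle_conv_jordani} \emph{except} the $\I_N^*$ rows, where it yields $J(1,3)\oplus[1]\oplus\cdots$ at an untwisted finite $\I_N^*$ fibre rather than the printed $J(-1,3)\oplus[1]\oplus\cdots$, and $[-1]\oplus[1]\oplus\cdots$ at a twisted $\I_N^*$ fibre rather than the printed identity. A determinant check (the product of all local monodromies of $\mathcal{W}$ must be trivial) and the invariance of the rigidity index both side with your rule, so this appears to be a sign error in the paper's tables rather than a defect in your argument; since none of the $38$ Herfurtner surfaces treated in the Appendix carries a starred fibre, the discrepancy is not exercised by the paper's numerical data. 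The only genuinely loose thread in your write-up is the irreducibility of $\mathcal{V}_0$ itself, which your argument secures only when a multiplicative fibre is present; for a non-isotrivial fibration with no $\I_N$ fibre one should instead use the $\I_N^*$ fibre forced by a pole of $\mathcal{J}$, whose local monodromy likewise has a unique invariant line.
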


\begin{proof}
Identifying the parabolic cohomology of the geometric variation of local system with the appropriate middle convolution was done above. 
The statement about the rank of parabolic cohomology follows from the rank formula \eqref{rank_formula}.
\end{proof}

Next, we prove some results about the geometry and period functions for these families of twists. 

\begin{proposition}\label{prop:EulerChar}
Let $f\colon\mathcal{E}\to\mathbf{P}^1_t$ be an elliptic surface with singular fibres located at $\Sigma=\{t_1,\dots, t_r=\infty\}$. 
Let $\Sigma^+$ denote the support of fibres of types $\I_N,\textrm{II},\textrm{III},\textrm{IV}$,  and $\Sigma^-$ the support of fibres of types $\I_N^*,\textrm{IV}^*,\textrm{III}^*,\textrm{II}^*$. 
The Euler characteristics of $\mathcal{E}$ and $\mathcal{E}^{t_i}_a$ are related as follows:
$$\chi(\mathcal{E}^{t_i}_a)=\left\{\begin{array}{ll}
\chi(\mathcal{E})+12&\textrm{if }\  t_i\in \Sigma^+\\
\chi(\mathcal{E})&\textrm{if }\  t_i\in \Sigma^-
\end{array}\right..$$

\end{proposition}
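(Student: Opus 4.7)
The plan is to use Kodaira's formula $\chi(\mathcal{E}) = \sum_{t \in \Sigma} e(F_t)$, together with a careful bookkeeping of how the fibre types change under the quadratic twist. Since the Euler characteristics of the Kodaira fibre types are well known from Table \ref{Table:KodairaClass} (namely $e(\I_N) = N$, $e(\I_N^*) = N+6$, $e(\textrm{II}) = 2$, $e(\textrm{IV}^*) = 8$, $e(\textrm{III}) = 3$, $e(\textrm{III}^*) = 9$, $e(\textrm{IV}) = 4$, $e(\textrm{II}^*) = 10$, and $e(\I_0^*) = 6$), the whole question reduces to determining the fibre type of $\mathcal{E}^{t_i}_a$ at each point of $\Sigma \cup \{a\}$.

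First I would observe that the homological invariant of $\mathcal{E}^{t_i}_a$ is $\mathcal{V}_0 \otimes \mathcal{K}^a_{-1} \otimes \mathcal{K}^{t_i}_{-1}$, as already noted in the discussion preceding Proposition \ref{prop:middle_convolution}. Consequently, the local monodromy around any $t_j \in \Sigma \setminus \{t_i\}$ is unchanged, while the monodromy around $t_i$ and around $a$ is multiplied by $-1$. Since the functional invariant $\mathcal{J}$ of the twist agrees with that of $\mathcal{E}$ (the twist does not alter the $j$-invariant of the generic fibre) and the fibre type at a point is determined by the pair (functional invariant, homological monodromy) via Kodaira's classification, the fibre type at each $t_j \neq t_i$ is unchanged, and at $t_i$ and $a$ one reads off the new type directly from Table \ref{Table:KodairaClass}.

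Next I would tabulate the effect at $t_i$: multiplication of the monodromy by $-1$ sends $\I_N \leftrightarrow \I_N^*$, $\textrm{II} \leftrightarrow \textrm{IV}^*$, $\textrm{III} \leftrightarrow \textrm{III}^*$, and $\textrm{IV} \leftrightarrow \textrm{II}^*$. In each case a type in $\Sigma^+$ is exchanged with a type in $\Sigma^-$ whose Euler number is exactly $6$ larger, so the contribution to $\chi$ at $t_i$ changes by $+6$ if $t_i \in \Sigma^+$ and by $-6$ if $t_i \in \Sigma^-$. At the previously smooth point $a$, the monodromy changes from $I$ to $-I$, turning an $\I_0$ fibre ($e = 0$) into an $\I_0^*$ fibre ($e = 6$), contributing $+6$.

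Summing the two local contributions then gives
\begin{equation*}
\chi(\mathcal{E}^{t_i}_a) - \chi(\mathcal{E}) = \begin{cases} (+6) + (+6) = 12, & t_i \in \Sigma^+, \\ (-6) + (+6) = 0, & t_i \in \Sigma^-, \end{cases}
\end{equation*}
which is exactly the claim. The only real content, and the step that deserves the most care, is the verification that the twist produces no new singular fibres away from $\{t_i, a\}$ and that the monodromy-twisting rules above coincide with the Kodaira type assignments on the nose; this is purely a table-lookup but must be checked case by case because for the additive types one must use that $\mathcal{J}$ is unchanged (so e.g.\ $\textrm{II}$ cannot be mistaken for another type with the same monodromy but different $\nu(g_2), \nu(g_3)$ pattern).
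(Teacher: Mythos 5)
Your proof is correct and follows essentially the same route as the paper, which likewise reduces the claim to the facts that the quadratic twist exchanges each fibre type with its starred counterpart (and creates an $\I_0^*$ at $a$), that $e(\I_0^*)=6$, and that each starred type has Euler number exactly $6$ more than its unstarred partner. Your version simply spells out the monodromy bookkeeping that the paper leaves implicit.
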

\begin{proof}
The statement about the Euler characteristic follows from the fact that the Euler characteristic of an $\I_0^*$ fibre is 6 and that the Euler characteristics of the fibres in $\Sigma^-$ are exactly 6 higher than their un-starred counterparts in $\Sigma^+$. 
\end{proof}

\begin{lemma}
Suppose that $\mathcal{E}\to \mathbf{P}^1_t$ is the elliptic surface given by the minimal (in the sense of \cite{schuett_elliptic_2010}) Weierstrass model:
$$Y^2=4X^3-g_2(t)X-g_3(t),$$
so that $\deg g_i\leq 2i\cdot d$, where $d$ is the arithmetic genus of $\mathcal{E}$.
Suppose further that $\mathcal{E}$ has no $\I_0^*$ fibres. 
If $d>1$, then $\frac{dX}{Y}\wedge dt$ is a global holomorphic $2$-form on $\mathcal{E}$. If $t_i\in\Sigma$ is a singular fibre, then the $2$-form $$\frac{dX}{\sqrt{(t-t_i)(t-a)}Y}\wedge dt$$
is holomorphic on $\mathcal{E}^{t_i}_a$; the same is true if $d=1$, i.e., $\mathcal{E}$ is rational and $t_i\in\Sigma^+$.
\end{lemma}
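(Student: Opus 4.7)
The plan is to handle the untwisted case first and then reduce the twisted case to it via a base-change argument. For the claim that $\frac{dX}{Y}\wedge dt$ is a global holomorphic $2$-form on $\mathcal{E}$ when $d>1$, I would use the fact that this form generates the dualizing sheaf $\omega_W$ of the Weierstrass model $W$ via the Poincar\'{e} residue along $F := Y^2 - 4X^3 + g_2 X + g_3 = 0$. The singularities of $W$ arising from fibers of types $\textrm{I}_N$ ($N\geq 2$), $\textrm{III}$, $\textrm{IV}$, $\textrm{I}_N^*$, $\textrm{IV}^*$, $\textrm{III}^*$ and $\textrm{II}^*$ are all rational double points of ADE type, so the minimal resolution $\mathcal{E}\to W$ is crepant and the form extends holomorphically to $\mathcal{E}$ over every finite singular fiber.

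To control the behaviour at $t=\infty$, substitute $s = 1/t$, $x = s^{2d}X$ and $y = s^{3d}Y$. The minimality hypothesis $\deg g_i \leq 2id$ ensures that $\tilde{g}_2(s) := s^{4d}g_2(1/s)$ and $\tilde{g}_3(s) := s^{6d}g_3(1/s)$ are polynomials, so the Weierstrass equation in the new chart reads $y^2 = 4x^3 - \tilde{g}_2(s)\,x - \tilde{g}_3(s)$. A direct computation then gives
$$\frac{dX}{Y}\wedge dt = -s^{d-2}\,\frac{dx}{y}\wedge ds,$$
which extends holomorphically across $s=0$ precisely when $d \geq 2$.

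For the twisted statement, Proposition \ref{prop:EulerChar} implies that the arithmetic genus $\tilde{d}$ of $\mathcal{E}^{t_i}_a$ satisfies $\tilde{d}\geq 2$ whenever $d\geq 2$, or $d=1$ and $t_i\in\Sigma^+$. The twisted Weierstrass model is $(Y')^2 = 4(X')^3 - (t-t_i)^2(t-a)^2 g_2 X' - (t-t_i)^3(t-a)^3 g_3$, which is minimal at $t_i$ in the $\Sigma^+$ case and becomes minimal after dividing $g_2, g_3$ by $(t-t_i)^4$ and $(t-t_i)^6$ in the $\Sigma^-$ case. Granting the first part of the lemma for the twist, $\frac{dX'}{Y'}\wedge dt$ is a holomorphic $2$-form on $\mathcal{E}^{t_i}_a$. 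Under the birational identification $X' = (t-t_i)(t-a)X$ and $Y' = ((t-t_i)(t-a))^{3/2}Y$, valid on the double cover of $\mathbf{P}^1$ ramified at $\{t_i,a\}$, the identity $d((t-t_i)(t-a))\wedge dt = (2t-t_i-a)\,dt\wedge dt = 0$ yields
$$\frac{dX'}{Y'}\wedge dt = \frac{dX}{\sqrt{(t-t_i)(t-a)}\,Y}\wedge dt,$$
which is the claimed form.

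The main obstacle is that the twist $\mathcal{E}^{t_i}_a$ generically acquires a new $\textrm{I}_0^*$ fiber at $t=a$, since a quadratic twist of a smooth fiber produces an $\textrm{I}_0^*$, so the first part of the lemma as stated does not apply to $\mathcal{E}^{t_i}_a$. I would resolve this by extending the first part to allow $\textrm{I}_0^*$ fibers through a direct local check that $\frac{dX'}{Y'}\wedge dt$ remains holomorphic across the $D_4$ singularity at $t=a$, again using crepancy of the minimal resolution of any rational double point. The remaining work is the careful bookkeeping of vanishing orders of $g_2,g_3$ under the twist and a short case-check with Tate's algorithm to pin down which fiber types arise in $\mathcal{E}^{t_i}_a$.
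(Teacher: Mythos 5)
Your proof is correct, but it proceeds by a genuinely different route than the paper's. The paper derives the shape of the twisted form from the rescaling $(g_2,g_3)\mapsto(g_2(t-t_i)^2(t-a)^2,\,g_3(t-t_i)^3(t-a)^3)$ and then appeals to the local analysis of Stiller, in which minimality of the Weierstrass model bounds the characteristic exponents of the Picard--Fuchs equation and holomorphy of $\omega\wedge dt$ is checked fibre by fibre from that de Rham data; the details are explicitly left to the reader. You instead argue algebro-geometrically: the form $\frac{dX}{Y}\wedge dt$ is the Poincar\'{e} residue generating the dualizing sheaf of the Weierstrass model, the ADE singularities over the bad fibres admit crepant minimal resolutions so the form extends to $\mathcal{E}$, and the coordinate change $s=1/t$, $x=s^{2d}X$, $y=s^{3d}Y$ produces the factor $s^{d-2}$ that isolates the role of the arithmetic genus; the twisted case then reduces to the untwisted one applied to the twisted Weierstrass model, with the wedge identity $du\wedge dt=0$ converting $\frac{dX'}{Y'}\wedge dt$ into the claimed expression. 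Your approach is more self-contained and makes the role of minimality transparent (it is exactly what keeps the singularities of the Weierstrass model at worst rational double points and keeps $\tilde g_2,\tilde g_3$ polynomial at $s=0$), whereas the paper's approach sits naturally beside its de Rham description of parabolic cohomology and generalizes to the residue conditions used later. You also correctly flag and repair the one genuine wrinkle --- that the twist creates an $\textrm{I}_0^*$ fibre at $t=a$, formally outside the hypotheses of the first assertion --- by noting that the corresponding $D_4$ point is again a rational double point, so crepancy still applies; the Euler-characteristic bookkeeping via Proposition \ref{prop:EulerChar} that gives $\tilde d\geq 2$ exactly under the stated hypotheses is likewise the right reduction. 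The remaining verifications you defer (vanishing orders under the twist, Tate's algorithm for the new fibre types) are routine and no less complete than what the paper itself supplies.
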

\begin{proof}
First, note that if $\mathcal{E}$ is given by such a Weierstrass model, then the quadratic twist is obtained by scaling the invariants as follows:
$$(g_2(t),g_3(t))\mapsto (g_2(t)(t-t_i)^2(t-a)^2,g_3(t)(t-t_i)^3(t-a)^3),$$
or just
$$(g_2(t)(t-a)^2,g_3(t)(t-a)^3)$$ if $t_i=\infty$.
It follows that the standard holomorphic $1$-form on the elliptic curves is given by
$$\omega=\frac{dX}{\sqrt{(t-t_i)(t-a)}Y}\ \textrm{or}\ \frac{dX}{\sqrt{t-a}Y}.$$

To show that $\omega\wedge dt$ is holomorphic on the total space, one must perform a local analysis such as what is carried out in \cite{stiller_picard_1987}. 
The assumption that the model is minimal places bounds on the characteristic exponents appearing in the Picard-Fuchs equations and allows one to carry this out; further details are left to the reader. 
\end{proof}

\begin{proposition}
Let $\mathcal{E}\to \mathbf{P}^1_t$ be an elliptic surface given by a minimal Weierstrass model with no $\I_0^*$ fibres, and let $t_i\in\Sigma$ denote a singular fibre; if $\mathcal{E}$ is rational, we assume further that $t_i\in\Sigma^+$. 
Denote by $\mathcal{L}$ the Picard-Fuchs operator associated to $\mathcal{E}$. 
The Picard-Fuchs equation of $\mathcal{E}^{t_i}$ is the middle Hadamard product, resp. middle convolution, of $\mathcal{L}$ with $\sqrt{1-t}$, resp. $\sqrt{t}$.
\end{proposition}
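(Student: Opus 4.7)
The plan is to identify the periods of the family $\mathcal{E}^{t_i}\to U_0$ with the Pochhammer-contour integrals $C^p_{1/2}(\phi)$ and $H^{t_i}_{1/2}(\phi)$ that by \cite{bogner_symplectically_2013} compute flat sections of $\textrm{MC}_{-1}(\mathcal{V}_0)$ and $\textrm{MH}_{-1}(\mathcal{V}_0)$, and then to use the irreducibility of parabolic cohomology from Proposition \ref{prop:middle_convolution} to upgrade ``annihilates'' to ``equals''.

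First I would use the preceding lemma to fix the global holomorphic $2$-form
$$\omega_a \;=\; \frac{dX}{\sqrt{(t-t_i)(t-a)}\,Y}\wedge dt$$
on $\mathcal{E}^{t_i}_a$ (with the obvious modification $\frac{dX}{\sqrt{t-a}\,Y}\wedge dt$ when $t_i=\infty$). A family of $2$-cycles is supplied by tubes $\tau_\gamma=\bigcup_{t\in\gamma}\delta(t)$, where $\gamma$ is a Pochhammer contour in $\mathbf{P}^1_t$ avoiding the singular fibres of $\mathcal{E}^{t_i}_a$ and $\delta(t)$ is a flat family of $1$-cycles on the elliptic fibres. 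A Fubini-type calculation reduces the period along such a tube to
$$\Pi_\gamma(a) \;=\; \int_\gamma \frac{\phi(t)}{\sqrt{(t-t_i)(t-a)}}\,dt, \qquad \phi(t) \;=\; \int_{\delta(t)}\frac{dX}{Y},$$
where $\phi$ is a local solution of $\mathcal{L}\phi=0$, and an analogous expression (without $\sqrt{t-t_i}$) holds when $t_i=\infty$. Matching against the definitions of $C^p_\mu$ and $H^p_\mu$ with $\mu=1/2$ identifies $\Pi_\gamma(a)$ with a middle convolution integral $C^p_{1/2}(\phi)(a)$ in the case $t_i=\infty$, and with a middle Hadamard integral $H^{t_i}_{1/2}(\phi)(a)$ when $t_i$ is finite.

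By \cite{bogner_symplectically_2013}, the middle convolution operator $\textrm{MC}_{-1}(\mathcal{L})$ and the middle Hadamard operator $\textrm{MH}_{-1}(\mathcal{L})$ are Fuchsian operators that annihilate these integrals, hence annihilate every period $\Pi_\gamma(a)$, and so are right-divisible by the Picard-Fuchs operator of $\mathcal{E}^{t_i}$. Conversely, the Picard-Fuchs operator has order at most $\rank\mathcal{W}$, which by Proposition \ref{prop:middle_convolution} equals the rank of $\textrm{MC}_{-1}(\mathcal{V}_0)$ or $\textrm{MH}_{-1}(\mathcal{V}_0)$; the irreducibility of $\mathcal{W}$ (same proposition) then forces the two operators to coincide. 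The main obstacle I expect is the first step: organizing the tube integration so that the Pochhammer contour on $\mathbf{P}^1_t$ lifts to an honest closed $2$-cycle, with the branch of $\sqrt{(t-t_i)(t-a)}$ and the monodromy of $\delta(t)$ around the singular fibres of $\mathcal{E}^{t_i}_a$ (tabulated in Proposition \ref{prop:middle_convolution}) consistently accounted for. Once this period identification is made precise, the Bogner-Reiter algorithm together with the irreducibility of $\mathcal{W}$ finishes the argument by a dimension count.
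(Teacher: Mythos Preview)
Your proposal is correct and follows essentially the same route as the paper: use the lemma to fix the holomorphic $2$-form, identify the periods with the Pochhammer integrals $H^{t_i}_{1/2}(\phi)$ (resp.\ $C^p_{1/2}(\phi)$), and then use irreducibility of $\mathcal{W}$ to pass from ``annihilates a common period'' to ``equal operators''. The one point you flag as an obstacle---closure of the tube over the Pochhammer contour---is handled in the paper by the observation that the monodromy of $\mathcal{E}^{t_i}_a$ at $t=a$ is that of an $\I_0^*$ fibre, namely $-\mathrm{id}$; since this is central, the commutator $[\gamma_{t_i},\gamma_a]$ acts trivially on the $1$-cycle $\delta(t)$, so the $2$-chain closes automatically.
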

\begin{proof}
Let $[\gamma_a,\gamma_{t_i}]$ be the Pochammer contour that encircle $t_i$ and $a\in\mathbf{P}^1-\Sigma$, and let $f(t)$ be a period function of $\mathcal{E}$. 
If $t_i\neq\infty$, then the function
$$H^{t_i}_{\frac{1}{2}}(f)=\int_{[\gamma_a,\gamma_{t_i}]} \frac{f(t)}{\sqrt{(t-t_i)(t-a)}}dt$$
is annihilated by the middle Hadamard product of $\mathcal{L}$ and $\sqrt{1-t}$. 
We now show that this function is a period of the elliptic surface family. 

By the preceding lemma, $\frac{\omega}{\sqrt{(t-t_i)(t-a)}}\wedge dt$ is a holomorphic $2$-form on the surface $\mathcal{E}^{t_i}$. 
It suffices to show that the the integrals defined above correspond to integrating $\omega\wedge dt$ across a $2$-cycle.
The period function $f(t)$ itself as an integral of $\omega$ over some $t$-varying $1$-cycle $\zeta$. 
If we drag $\zeta$ along the Pochammer contour $[\gamma_a,\gamma_{t_i}]$, we obtain a $2$ chain that necessarily closes since the monodromy at the $\I_0^*$ fibre is equal to $-1$.
Thus
$$H^{t_i}_{\frac{1}{2}}(f)=\int_{[\gamma_a,\gamma_{t_i}]}\frac{f(t)}{\sqrt{(t-t_i)(t-a)}}dt=\int_{[\gamma_a,\gamma_{t_i}]}\int_\zeta\frac{\omega}{\sqrt{(t-t_i)(t-a)}}\wedge dt$$
is the integral of the holomorphic $2$-form across a $2$-cycle.
Finally, the fact that the Picard-Fuchs operator is equal to the the middle Hadamard products follows from the fact that both operators are irreducible and annihilate the same period. 

If $t_i=\infty$, the proof is analogous.
\end{proof}

\begin{corollary}
Except in the case that $\mathcal{E}$ is a rational elliptic surface and $t_i\in\Sigma^-$, the parabolic cohomology local system $\mathcal{W}$ of the associated geometric variation of local systems $\mathcal{V}^{t_i}$ is equal to the local system of transcendental lattices $\mathbf{T}$. 
In particular, the transcendental rank is determined by the fibre types of the original elliptic surface $\mathcal{E}$. 
\end{corollary}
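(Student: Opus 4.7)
The plan is to combine the irreducibility of $\mathcal{W}$ established in Proposition \ref{prop:middle_convolution} with the existence of a holomorphic $(2,0)$-class in $\mathcal{W}$ produced by the preceding lemma and proposition. The excluded case (rational $\mathcal{E}$ with $t_i\in\Sigma^-$) is precisely the case in which the twisted surface $\mathcal{E}^{t_i}_a$ remains rational by Proposition \ref{prop:EulerChar}, and hence has trivial transcendental lattice; thus the restriction in the hypothesis is already forced on geometric grounds.

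First, I would show that the local system $\mathbf{T}$ of transcendental lattices sits inside $\mathcal{W}$ as a sub-local system. Fibrewise, by Proposition \ref{Prop:EllipticSurfacePCohomStructure}, $\mathbf{T}(\mathcal{E}^{t_i}_a)=L(\mathcal{E}^{t_i}_a)_\mathbf{Q}^\perp\cap P(\mathcal{E}^{t_i}_a)$ lies inside parabolic cohomology $P=T^\perp\subseteq H^2(\mathcal{E}^{t_i}_a,\mathbf{Z})$. As $a$ varies in $U_0$, the trivial lattice $T$ is generated by the zero section, general fibre, and the locally constant family of components of bad fibres; its orthogonal complement in the locally constant sheaf $R^2(\pi\circ f)_*\mathbf{Z}$ is therefore a sub-local system containing $\mathbf{T}$, and $\mathbf{T}$ itself is the intersection with the orthogonal complement of all algebraic classes, which is preserved by flat transport because it is cut out by the Hodge filtration.

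Second, I would invoke irreducibility: Proposition \ref{prop:middle_convolution} identifies $\mathcal{W}$ with either a middle Hadamard product or a middle convolution, which preserves irreducibility of local systems. Hence the sub-local system $\mathbf{T}\subseteq\mathcal{W}$ is either zero or of full rank over $\mathbf{Q}$. To rule out the first alternative in the non-excluded cases, I would use the preceding proposition: under the stated hypotheses, the form $\frac{dX}{\sqrt{(t-t_i)(t-a)}Y}\wedge dt$ is a holomorphic $2$-form on $\mathcal{E}^{t_i}_a$, and its period, which is a Hadamard product/convolution of a period of $\mathcal{E}$, is annihilated by the irreducible Picard–Fuchs operator of $\mathcal{W}$. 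Thus $F^2\mathcal{W}\neq 0$; since algebraic classes are of type $(1,1)$, this $(2,0)$-class lies in $\mathbf{T}$ and shows $\mathbf{T}\neq 0$, hence $\mathbf{T}_\mathbf{Q}=\mathcal{W}_\mathbf{Q}$. Both $\mathbf{T}$ and the parabolic cohomology lattice are saturated in $H^2(\mathcal{E}^{t_i}_a,\mathbf{Z})$, so the identification descends to the integral level.

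The rank statement follows immediately: once $\mathcal{W}=\mathbf{T}$, the rank of $\mathbf{T}(\mathcal{E}^{t_i}_a)$ is the rank of $\mathcal{W}$, which is given by the explicit formula of Proposition \ref{prop:middle_convolution} in terms of the singular fibre types of $\mathcal{E}$. The main subtlety I anticipate is the verification that $\mathbf{T}$ is genuinely flat inside $\mathcal{W}$—the Mordell–Weil part of $\mathcal{E}^{t_i}_a$ can jump as $a$ varies, so one must argue that the transcendental summand, which is picked out by the Hodge filtration rather than by algebraic classes, does vary in a locally constant fashion. This is where care with the decomposition $P_\mathbf{Q}=L\oplus\mathbf{T}$ from Proposition \ref{Prop:EllipticSurfacePCohomStructure} is essential, and where most of the work of the argument is concentrated.
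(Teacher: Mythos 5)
Your proposal is correct and takes essentially the same route as the paper, whose entire proof is the one-line remark that the corollary ``follows immediately from the Propositions above; the hypotheses guarantee that we do not have a rational elliptic surface after twisting.'' You have simply made explicit the chain the paper leaves implicit --- irreducibility of $\mathcal{W}$ from the middle convolution identification, non-vanishing of the $(2,0)$-part of the twisted surface via the holomorphic form $\frac{dX}{\sqrt{(t-t_i)(t-a)}Y}\wedge dt$, the decomposition $P_\mathbf{Q}=L_\mathbf{Q}\oplus\mathbf{T}_\mathbf{Q}$ from Proposition \ref{Prop:EllipticSurfacePCohomStructure}, and saturation to pass from $\mathbf{Q}$ to $\mathbf{Z}$.
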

\begin{proof}
This follows immediately from the Propositions above; the hypotheses guarantee that we do not have a rational elliptic surface after twisting. 
\end{proof}

We can use the Dettweiler-Wewers algorithm to compute the parabolic cohomology of these twisted families explicitly. 
Choose a homeomorphism $\mathbf{P}^1\to\mathbf{P}^1$ for which $t_1,\dots t_n$ are mapped to $x_1<x_2<\dots x_{n-1}<x_n=\infty$ on the real axis, and let $x_0<x_1$ be a base point. 
Let $\gamma_i$ denote the loop that travels to $x_i$ in the upper half-plane and encircles $x_i$ positively; denote the pre-image of this loop by $\gamma_i$ as well. 
With respect to this basis of loops for $\pi_1(U_0,t_0)$, the braiding map for the GVLS is determined by 
$$\ph(\gamma_1)=\beta_1^2,\ \ph(\gamma_2)=\beta_1^{-1}\beta_2^2\beta_1,\dots, \ph(\gamma_{n-1})=\beta_{1}^{-1}\cdots\beta_{n-2}^{-1}\beta_{n-1}^2\beta_{n-2}\cdots\beta_1.$$
See Figure \ref{fig:middle_convolution} for reference.
We apply this algorithm to the totality of twists arising from the 38 isolated rational elliptic surfaces with four singular fibres on the Herfurtner list and summarize the results below.

\begin{figure}
    \centering
    \includegraphics[width=10cm]{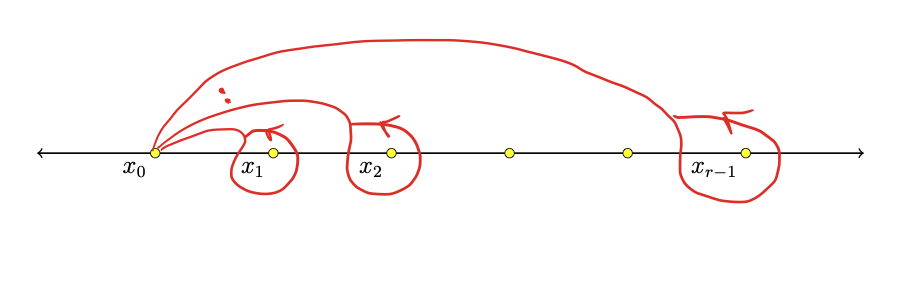}
    \caption{The loops $\gamma_i$ on $\mathbf{P}^1-\{x_1,\dots, x_r=\infty\}$ that are used to compute the homological invariant. The braiding map can be determined from this figure.}
    \label{fig:middle_convolution}
\end{figure}

\begin{corollary}\label{cor:midd_con}
Let $\mathcal{E}$ be one of the 38 isolated elliptic surfaces with four singular fibres tabulated in the Herfurnter list \cite{herfurtner_elliptic_1991}. 
Then the twists $\mathcal{E}^{t_i}_a$ for $i=1,2,3,4$ are all K3 surfaces. 
The local systems $\mathbf{T}$ of transcendental lattices and Picard-Fuchs equations for these twists are tabulated in Tables \ref{tab:PF_0_additive}---\ref{tab:LocalSystems_3_I4} in the Appendix. 
In particular, there are $35$ families of rank $19$ K3 surfaces that arise from this construction---$6\cdot 4=24$ coming from the $6$ surfaces with $4$ multiplicative fibres and $11$ coming from the $11$ surfaces with $3$ multiplicative fibres. 
Each of the $24$ families arising from surfaces with $4$ multiplicative fibres has unipotent monodromy at the twisted fibre; they cover a modular curve with at least one cusp.
Each of the $11$ families arising from surfaces with $3$ multiplicative fibres has all local monodromies of finite order; they cover a Shimura curve.
\end{corollary}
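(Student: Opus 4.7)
The plan is to verify each claim of the corollary in sequence, using Propositions \ref{prop:middle_convolution} and \ref{prop:EulerChar} together with the Dettweiler-Wewers algorithm to reduce all assertions to inspection of the tables. My first step would be to verify that every twist is indeed a K3 surface. A direct inspection of Herfurtner's list \cite{herfurtner_elliptic_1991} shows that every singular fibre of each of the 38 rigid rational elliptic surfaces belongs to $\Sigma^+$, i.e., is of type $\I_N$, $\textrm{II}$, $\textrm{III}$, or $\textrm{IV}$ (there is no room for a starred fibre, since $\chi(\mathcal{E})=12$ must be distributed across four non-trivial fibres with section). Proposition \ref{prop:EulerChar} then gives $\chi(\mathcal{E}^{t_i}_a)=24$ in every case; combined with the fact that $\mathcal{E}^{t_i}_a$ is a minimal elliptic surface with section, this forces the arithmetic genus to be $\chi(\mathcal{O})=\chi_{\mathrm{top}}/12=2$ and identifies $\mathcal{E}^{t_i}_a$ as a K3 surface.

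Next I would count the rank-19 families. For a K3 surface, Picard number $19$ corresponds to a rank-$3$ transcendental lattice, so by the preceding corollary about $\mathcal{W}=\mathbf{T}$ I want $r_\mathcal{W}=3$. Substituting $r=4$ into Proposition \ref{prop:middle_convolution}: when $\mathcal{E}$ has $\nu=4$ multiplicative fibres (6 cases), every $t_i$ is an $\I_N$ fibre and $r_\mathcal{W}=2r-\nu-1=3$, giving $6\cdot 4=24$ rank-19 families; when $\nu=3$ (11 cases), only the twist at the unique additive fibre, necessarily of type $\textrm{II}$, $\textrm{III}$, or $\textrm{IV}$, gives $r_\mathcal{W}=2r-\nu-2=3$, producing the further 11 rank-19 families. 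Twists at multiplicative fibres of the $\nu=3$ surfaces give $r_\mathcal{W}=4$ and so fall outside the rank-19 count.

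Third, the monodromy claims come by reading off Tables \ref{tab:middle_conv_jordan1} and \ref{tab:middle_conv_jordani}. In the 24 families where all fibres of $\mathcal{E}$ are of type $\I_N$, the local monodromy of $\mathcal{W}$ at the twisted fibre is the single unipotent Jordan block $\begin{bmatrix}1&1&0\\0&1&1\\0&0&1\end{bmatrix}$, which is a nontrivial parabolic element of the monodromy group; this forces the base of the family to have a cusp at $t_i$ and hence to be a non-compact modular curve. In the 11 families from surfaces with $\nu=3$, the local monodromy at the twisted type-$\textrm{II}/\textrm{III}/\textrm{IV}$ fibre has roots-of-unity eigenvalues, and the remaining local monodromies listed in Table \ref{tab:middle_conv_jordani} (the $J(M_j)$ entries inherited from the multiplicative fibres of $\mathcal{E}$, now modified by the $\lambda^{-1}$ twist at $\infty$) all have finite order as well; the absence of any unipotent local monodromy rules out cusps, so the base is a compact arithmetic quotient of $\mathbb{H}$, i.e., a Shimura curve.

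Finally, the individual entries of Tables \ref{tab:PF_0_additive}--\ref{tab:LocalSystems_3_I4} are generated by running the Dettweiler-Wewers algorithm with the braiding map $\ph(\gamma_i)=\beta_1^{-1}\cdots\beta_{i-1}^{-1}\beta_i^2\beta_{i-1}\cdots\beta_1$ of Section \ref{sec:middle_convolution} and the initial monodromy tuples extracted from Herfurtner's list via Table \ref{Table:KodairaClass}. The main obstacle is not the Euler characteristic bookkeeping or the rank count, but pinning down each computed monodromy group as precisely the expected arithmetic (congruence or quaternionic) subgroup of $\mathrm{O}(2,1;\mathbb{Z})$: this requires comparing the explicit integer matrices, determinants, and invariant quadratic forms produced by the algorithm against the known classification of arithmetic Fuchsian groups, and for the Shimura-curve cases additionally identifying the rational quaternion algebra from the discriminant and signature of the computed transcendental lattice. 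Executing this comparison for all 35 families is the bulk of the work and is tractable only via a symbolic computer algebra implementation of the algorithm.
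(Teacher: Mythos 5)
Your argument follows the same route the paper (implicitly) takes: the corollary is not given a separate proof there, but is deduced exactly as you do from Proposition \ref{prop:EulerChar} (Euler number $24$ forces K3), the rank formula of Proposition \ref{prop:middle_convolution} with $r=4$ (giving the $24+11=35$ count), the Jordan-form tables for the cusp/no-cusp dichotomy, and a run of the Dettweiler--Wewers algorithm for the explicit tables. All of that is handled correctly.

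One parenthetical justification is wrong, though. It is not true that $\chi(\mathcal{E})=12$ leaves ``no room'' for a starred fibre among four singular fibres: configurations such as $\I_1,\I_1,\I_1,\textrm{III}^*$ and $\I_1,\I_1,\I_2,\I_2^*$ have total Euler number $1+1+1+9=12$ and $1+1+2+8=12$ and do occur on rational elliptic surfaces with four singular fibres --- they appear in Section \ref{sec:conclusion} of this very paper as deforming families. What saves your argument is that none of these starred configurations is \emph{rigid}; the absence of starred fibres among the $38$ isolated surfaces is a fact read off from Herfurtner's classification, which is the direct inspection you also invoke. It is that inspection, not the Euler-characteristic count, that licenses applying Proposition \ref{prop:EulerChar} with $t_i\in\Sigma^+$ in every case, so the conclusion stands but the purported shortcut should be deleted.
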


\begin{remark}
The rank 19 families described in Corollary \ref{cor:midd_con} were previously considered in \cite{laza_universal_2013} and their Picard-Fuchs equations were computed in \cite{besser_picard-fuchs_2012} by different means. 
It is remarked in \cite{laza_universal_2013} that the rank 19 families corresponding to split quaternion algebras correspond precisely to the rational elliptic surfaces with four multiplicative fibres, an observation for which the authors were unable to give an explanation. 
Our work offers such an explanation: the middle convolution of the homological invariant associated to an elliptic surface with only multiplicative fibres will necessarily have a point of infinite order monodromy corresponding to the twisted multiplicative fibre, which means the corresponding Shimura curve will have cusps. 
In contrast, the other rank 19 families appear by twisting at non-multiplicative fibres and result in finite order local monodromies, which means the associated Shimura curve will not have cusps and therefore the corresponding quaternion algebra will be non-split.
\end{remark}

\begin{remark}
In addition to the geometric setting described in this section, the Hadamard product plays a prominent role in period computations in other settings, such as the computations considered in \cite{doran_calabi-yau_2015} and \cite{doran_doran-harder-thompson_2019}.
\end{remark}

\begin{example}
As an illustration of these techniques, we construct a family of K3 surfaces with generic Picard-rank $10$ for which we have complete control over the underlying $\mathbf{Z}$-VHS. 
Let $\mathcal{E}$ be the following rational elliptic surface:
$$\mathcal{E}:\ Y^2=4X^3+4t^4X+4t$$
which was considered earlier.
The surface $\mathcal{E}$ has ten $\I_1$ fibres located at the roots of $4t^{10}+27$ and a type $\textrm{II}$ fibre located at $t=0$. 
The Mordell-Weil rank is equal to $8$, which is the maximum rank that any rational elliptic surface can have. 
The Picard-Fuchs equation for $\mathcal{E}$ is 
\begin{equation}
    \frac{d^2f}{dt^2}+\frac{32t^{10}-54}{t(4t^{10}+27)}\frac{df}{dt}+\frac{96t^{10}-57}{4t^2(4t^{10}+27)}f=0.
\end{equation}
Note that $t=\infty$ is a singularity of the Picard-Fuchs operator, but is not a singular fibre---it is an \emph{apparent singularity}.
The family of twists $\mathcal{E}^0_a$ are K3 surfaces and the $\mathbf{Z}$-VHS of the underlying transcendental lattices, which is equal to the parabolic cohomology local system, has rank $10$. 
The Picard-Fuchs differential equations corresponding to the twisted family of K3 surfaces is obtained by applying the Hadamard product:
\begin{multline}
      \frac{d^{10}f}{da^{10}}+ \frac{10(38a^{10} + 27)}{a(4a^{10} + 27)}\frac{d^{9}f}{da^{9}}
+\frac{15(3876a^{10} + 25)}{4a^2(4a^{10} + 27)}\frac{d^{8}f}{da^{8}}
+\frac{75(3876a^{10} - 5)}{a^3(4a^{10} + 27)}\frac{d^{7}f}{da^{7}}\\
+\frac{225(29393a^{10} + 10)}{2a^4(4a^{10} + 27)}\frac{d^{6}f}{da^{6}}
+\frac{45(969969a^{10} - 100)}{2a^5(4a^{10} + 27)}\frac{d^{5}f}{da^{5}}
+\frac{225(2909907a^{10} + 80)}{8a^6(4a^{10} + 27)}\frac{d^{4}f}{da^{4}}\\
+\frac{654729075a^3}{4(4a^{10} + 27)}\frac{d^{3}f}{da^{3}}
+\frac{9820936125a^2}{64(4a^{10} + 27)}\frac{d^{2}f}{da^{2}}
+\frac{3273645375a}{64(4a^{10} + 27)}\frac{df}{da}
+\frac{654729075}{256(4a^{10} + 27)}f=0
\end{multline}
With respect to a basis of parabolic cocylcles, the intersection matrix and global monodromy representation is calculated to be:
\begin{equation}
\resizebox{\linewidth}{!}{$
Q=\left(\begin{array}{rrrrrrrrrr}
0 & 0 & 1 & 0 & -1 & 0 & -1 & 0 & 0 & 0 \\
0 & 0 & -1 & 1 & 1 & 1 & 1 & -1 & 0 & 0 \\
1 & -1 & 0 & 0 & -1 & 0 & -1 & 0 & 0 & 0 \\
0 & 1 & 0 & 0 & 1 & 1 & 1 & -1 & 0 & 0 \\
-1 & 1 & -1 & 1 & 0 & 0 & 1 & 0 & 0 & 0 \\
0 & 1 & 0 & 1 & 0 & 0 & 1 & -1 & 0 & 0 \\
-1 & 1 & -1 & 1 & 1 & 1 & 0 & 0 & 0 & 0 \\
0 & -1 & 0 & -1 & 0 & -1 & 0 & 0 & 0 & 0 \\
0 & 0 & 0 & 0 & 0 & 0 & 0 & 0 & -2 & -1 \\
0 & 0 & 0 & 0 & 0 & 0 & 0 & 0 & -1 & -2
\end{array}\right),\ \det(Q)=12,\ \textrm{disc}(Q)=\mathbf{Z}/2\mathbf{Z}\oplus\mathbf{Z}/6\mathbf{Z};
$}
\end{equation}
The monodromy representation with respect to this basis is:
\begin{multline*}
\resizebox{\linewidth}{!}{$
   M_1= \left(\begin{array}{rrrrrrrrrr}
1 & 0 & 0 & 0 & 0 & 0 & 0 & 0 & 0 & 0 \\
0 & 1 & 0 & 0 & 0 & 0 & 0 & 0 & 0 & 0 \\
0 & 0 & 1 & 0 & 0 & 0 & 0 & 0 & 0 & 0 \\
0 & 0 & 0 & 1 & 0 & 0 & 0 & 0 & 0 & 0 \\
0 & 0 & 0 & 0 & 1 & 0 & 0 & 0 & 0 & 0 \\
0 & 0 & 0 & 0 & 0 & 1 & 0 & 0 & 0 & 0 \\
0 & 0 & 0 & 0 & 0 & 0 & 1 & 0 & 0 & 0 \\
-1 & 0 & -1 & -1 & 0 & -1 & -1 & -1 & 0 & 0 \\
0 & 0 & 0 & 0 & 0 & 0 & 0 & 0 & 1 & 0 \\
0 & 0 & 0 & 0 & 0 & 0 & 0 & 0 & 0 & 1
\end{array}\right), M_2=\left(\begin{array}{rrrrrrrrrr}
1 & 0 & 0 & 0 & 0 & 0 & 0 & 0 & 0 & 0 \\
0 & 1 & 0 & 0 & 0 & 0 & 0 & 0 & 0 & 0 \\
0 & 0 & 1 & 0 & 0 & 0 & 0 & 0 & 0 & 0 \\
0 & 0 & 0 & 1 & 0 & 0 & 0 & 0 & 0 & 0 \\
0 & 0 & 0 & 0 & 1 & 0 & 0 & 0 & 0 & 0 \\
0 & 0 & 0 & 0 & 0 & 1 & 0 & 0 & 0 & 0 \\
0 & 1 & -1 & 0 & 1 & -1 & -1 & -1 & 0 & 0 \\
0 & 0 & 0 & 0 & 0 & 0 & 0 & 1 & 0 & 0 \\
0 & 0 & 0 & 0 & 0 & 0 & 0 & 0 & 1 & 0 \\
0 & 0 & 0 & 0 & 0 & 0 & 0 & 0 & 0 & 1
\end{array}\right), 
M_3=\left(\begin{array}{rrrrrrrrrr}
1 & 0 & 0 & 0 & 0 & 0 & 0 & 0 & 0 & 0 \\
0 & 1 & 0 & 0 & 0 & 0 & 0 & 0 & 0 & 0 \\
0 & 0 & 1 & 0 & 0 & 0 & 0 & 0 & 0 & 0 \\
0 & 0 & 0 & 1 & 0 & 0 & 0 & 0 & 0 & 0 \\
0 & 0 & 0 & 0 & 1 & 0 & 0 & 0 & 0 & 0 \\
1 & 1 & 0 & 1 & 1 & -1 & -1 & -1 & 0 & 0 \\
0 & 0 & 0 & 0 & 0 & 0 & 1 & 0 & 0 & 0 \\
0 & 0 & 0 & 0 & 0 & 0 & 0 & 1 & 0 & 0 \\
0 & 0 & 0 & 0 & 0 & 0 & 0 & 0 & 1 & 0 \\
0 & 0 & 0 & 0 & 0 & 0 & 0 & 0 & 0 & 1
\end{array}\right)$}, 
\\
\resizebox{\linewidth}{!}{$
  M_4= \left(\begin{array}{rrrrrrrrrr}
1 & 0 & 0 & 0 & 0 & 0 & 0 & 0 & 0 & 0 \\
0 & 1 & 0 & 0 & 0 & 0 & 0 & 0 & 0 & 0 \\
0 & 0 & 1 & 0 & 0 & 0 & 0 & 0 & 0 & 0 \\
0 & 0 & 0 & 1 & 0 & 0 & 0 & 0 & 0 & 0 \\
-1 & 0 & -1 & -1 & -1 & 1 & 1 & 0 & 0 & 0 \\
0 & 0 & 0 & 0 & 0 & 1 & 0 & 0 & 0 & 0 \\
0 & 0 & 0 & 0 & 0 & 0 & 1 & 0 & 0 & 0 \\
0 & 0 & 0 & 0 & 0 & 0 & 0 & 1 & 0 & 0 \\
0 & 0 & 0 & 0 & 0 & 0 & 0 & 0 & 1 & 0 \\
0 & 0 & 0 & 0 & 0 & 0 & 0 & 0 & 0 & 1
\end{array}\right),  
M_5=\left(\begin{array}{rrrrrrrrrr}
1 & 0 & 0 & 0 & 0 & 0 & 0 & 0 & 0 & 0 \\
0 & 1 & 0 & 0 & 0 & 0 & 0 & 0 & 0 & 0 \\
0 & 0 & 1 & 0 & 0 & 0 & 0 & 0 & 0 & 0 \\
0 & 1 & -1 & -1 & -1 & 1 & 0 & -1 & 0 & 0 \\
0 & 0 & 0 & 0 & 1 & 0 & 0 & 0 & 0 & 0 \\
0 & 0 & 0 & 0 & 0 & 1 & 0 & 0 & 0 & 0 \\
0 & 0 & 0 & 0 & 0 & 0 & 1 & 0 & 0 & 0 \\
0 & 0 & 0 & 0 & 0 & 0 & 0 & 1 & 0 & 0 \\
0 & 0 & 0 & 0 & 0 & 0 & 0 & 0 & 1 & 0 \\
0 & 0 & 0 & 0 & 0 & 0 & 0 & 0 & 0 & 1
\end{array}\right), 
M_6=\left(\begin{array}{rrrrrrrrrr}
1 & 0 & 0 & 0 & 0 & 0 & 0 & 0 & 0 & 0 \\
0 & 1 & 0 & 0 & 0 & 0 & 0 & 0 & 0 & 0 \\
1 & 1 & -1 & -1 & -1 & 0 & -1 & -1 & 0 & 0 \\
0 & 0 & 0 & 1 & 0 & 0 & 0 & 0 & 0 & 0 \\
0 & 0 & 0 & 0 & 1 & 0 & 0 & 0 & 0 & 0 \\
0 & 0 & 0 & 0 & 0 & 1 & 0 & 0 & 0 & 0 \\
0 & 0 & 0 & 0 & 0 & 0 & 1 & 0 & 0 & 0 \\
0 & 0 & 0 & 0 & 0 & 0 & 0 & 1 & 0 & 0 \\
0 & 0 & 0 & 0 & 0 & 0 & 0 & 0 & 1 & 0 \\
0 & 0 & 0 & 0 & 0 & 0 & 0 & 0 & 0 & 1
\end{array}\right)$},
\end{multline*}
\begin{multline*}
\resizebox{\linewidth}{!}{$
  M_7= \left(\begin{array}{rrrrrrrrrr}
1 & 0 & 0 & 0 & 0 & 0 & 0 & 0 & 0 & 0 \\
-1 & -1 & 1 & 1 & 0 & 1 & 1 & 0 & 0 & 0 \\
0 & 0 & 1 & 0 & 0 & 0 & 0 & 0 & 0 & 0 \\
0 & 0 & 0 & 1 & 0 & 0 & 0 & 0 & 0 & 0 \\
0 & 0 & 0 & 0 & 1 & 0 & 0 & 0 & 0 & 0 \\
0 & 0 & 0 & 0 & 0 & 1 & 0 & 0 & 0 & 0 \\
0 & 0 & 0 & 0 & 0 & 0 & 1 & 0 & 0 & 0 \\
0 & 0 & 0 & 0 & 0 & 0 & 0 & 1 & 0 & 0 \\
0 & 0 & 0 & 0 & 0 & 0 & 0 & 0 & 1 & 0 \\
0 & 0 & 0 & 0 & 0 & 0 & 0 & 0 & 0 & 1
\end{array}\right), 
M_8=\left(\begin{array}{rrrrrrrrrr}
-1 & -1 & 1 & 0 & -1 & 1 & 0 & -1 & 0 & 0 \\
0 & 1 & 0 & 0 & 0 & 0 & 0 & 0 & 0 & 0 \\
0 & 0 & 1 & 0 & 0 & 0 & 0 & 0 & 0 & 0 \\
0 & 0 & 0 & 1 & 0 & 0 & 0 & 0 & 0 & 0 \\
0 & 0 & 0 & 0 & 1 & 0 & 0 & 0 & 0 & 0 \\
0 & 0 & 0 & 0 & 0 & 1 & 0 & 0 & 0 & 0 \\
0 & 0 & 0 & 0 & 0 & 0 & 1 & 0 & 0 & 0 \\
0 & 0 & 0 & 0 & 0 & 0 & 0 & 1 & 0 & 0 \\
0 & 0 & 0 & 0 & 0 & 0 & 0 & 0 & 1 & 0 \\
0 & 0 & 0 & 0 & 0 & 0 & 0 & 0 & 0 & 1
\end{array}\right), 
M_9=\left(\begin{array}{rrrrrrrrrr}
0 & -1 & 0 & -1 & -1 & 0 & -1 & -1 & -2 & 2 \\
1 & 2 & 0 & 1 & 1 & 0 & 1 & 1 & 2 & -2 \\
-1 & -1 & 1 & -1 & -1 & 0 & -1 & -1 & -2 & 2 \\
1 & 1 & 0 & 2 & 1 & 0 & 1 & 1 & 2 & -2 \\
1 & 1 & 0 & 1 & 2 & 0 & 1 & 1 & 2 & -2 \\
1 & 1 & 0 & 1 & 1 & 1 & 1 & 1 & 2 & -2 \\
1 & 1 & 0 & 1 & 1 & 0 & 2 & 1 & 2 & -2 \\
-1 & -1 & 0 & -1 & -1 & 0 & -1 & 0 & -2 & 2 \\
-1 & -1 & 0 & -1 & -1 & 0 & -1 & -1 & -1 & 2 \\
1 & 1 & 0 & 1 & 1 & 0 & 1 & 1 & 2 & -1
\end{array}\right)$}, 
\\
\resizebox{\linewidth}{!}{$
 M_{10}=\left(\begin{array}{rrrrrrrrrr}
1 & 0 & 0 & 0 & 0 & 0 & 0 & 0 & 0 & 0 \\
1 & 1 & 1 & 1 & 0 & 1 & 1 & 0 & 0 & -2 \\
0 & 0 & 1 & 0 & 0 & 0 & 0 & 0 & 0 & 0 \\
1 & 0 & 1 & 2 & 0 & 1 & 1 & 0 & 0 & -2 \\
0 & 0 & 0 & 0 & 1 & 0 & 0 & 0 & 0 & 0 \\
1 & 0 & 1 & 1 & 0 & 2 & 1 & 0 & 0 & -2 \\
0 & 0 & 0 & 0 & 0 & 0 & 1 & 0 & 0 & 0 \\
-1 & 0 & -1 & -1 & 0 & -1 & -1 & 1 & 0 & 2 \\
1 & 0 & 1 & 1 & 0 & 1 & 1 & 0 & 1 & -2 \\
2 & 0 & 2 & 2 & 0 & 2 & 2 & 0 & 0 & -3
\end{array}\right), 
M_0=\left(\begin{array}{rrrrrrrrrr}
3 & 1 & 1 & 2 & 1 & 1 & 2 & 1 & 2 & -2 \\
-1 & 2 & -2 & -1 & 1 & -2 & -1 & 1 & 0 & 2 \\
2 & 1 & 2 & 2 & 1 & 1 & 2 & 1 & 2 & -2 \\
-1 & 1 & -2 & 0 & 1 & -2 & -1 & 1 & 0 & 2 \\
-2 & -1 & -1 & -2 & 0 & -1 & -2 & -1 & -2 & 2 \\
-1 & 1 & -2 & -1 & 1 & -1 & -1 & 1 & 0 & 2 \\
-2 & -1 & -1 & -2 & -1 & -1 & -1 & -1 & -2 & 2 \\
1 & -1 & 2 & 1 & -1 & 2 & 1 & 0 & 0 & -2 \\
2 & 1 & 1 & 2 & 1 & 1 & 2 & 1 & 3 & -2 \\
1 & 2 & -1 & 1 & 2 & -1 & 1 & 2 & 2 & 1
\end{array}\right), 
M_\infty=\left(\begin{array}{rrrrrrrrrr}
-1 & 0 & 0 & 0 & 0 & 0 & 0 & 0 & 0 & 0 \\
0 & -1 & 0 & 0 & 0 & 0 & 0 & 0 & 0 & 0 \\
0 & 0 & -1 & 0 & 0 & 0 & 0 & 0 & 0 & 0 \\
0 & 0 & 0 & -1 & 0 & 0 & 0 & 0 & 0 & 0 \\
0 & 0 & 0 & 0 & -1 & 0 & 0 & 0 & 0 & 0 \\
0 & 0 & 0 & 0 & 0 & -1 & 0 & 0 & 0 & 0 \\
0 & 0 & 0 & 0 & 0 & 0 & -1 & 0 & 0 & 0 \\
0 & 0 & 0 & 0 & 0 & 0 & 0 & -1 & 0 & 0 \\
0 & 0 & 0 & 0 & 0 & 0 & 0 & 0 & -1 & 0 \\
0 & 0 & 0 & 0 & 0 & 0 & 0 & 0 & 0 & -1
\end{array}\right)$}.
\end{multline*}
\end{example}

\section{Lattice-Polarized Families of K3 Surfaces}\label{sec:K3surfaces}

In this section, we will describe how to use the theory of GVLSs to compute the $\mathbf{Z}$-VHS underlying a \emph{lattice-polarized family of K3 surfaces}. 
Of particular interest to us are $M_N$-polarized families of K3 surfaces. 
After explaining the method, we will use our techniques to calculate the $\mathbf{Z}$-VHS underlying the universal families $\mathcal{X}_N\to X_0(N)^+$ of such K3 surfaces classified in \cite{doran_calabi-yau_2017}.

We start by recalling the definition of a \emph{lattice-polarized family of K3 surfaces} as found in \cite{doran_families_2015}.

\begin{definition}
Let $L\subseteq\Lambda_{K3}$ be a lattice and $\pi^A\colon\mathcal{X}\to A$ be a smooth projective family of K3 surfaces over a smooth quasiprojective base $A$. 
We say that $\mathcal{X}^A$ is an \emph{$L$-polarized family of K3 surfaces} if
\begin{itemize}
    \item there is trivial local subsystem $\mathcal{L}$ of $R^2\pi_*\mathbf{Z}$ so that, for each $p\in A$, the fibre $\mathcal{L}_p\ins H^2(\mathcal{X}_p,\mathbf{Z})$ of $\mathcal{L}$ over $p$ is a primitive sublattice of $\textrm{NS}(\mathcal{X}_p)$ that is isomorphic to $L$, and
    \item there is a line bundle $\mathcal{A}$ on $\mathcal{X}^A$ whose restriction $\mathcal{A}_p$ to any fibre $\mathcal{X}_p$ is ample with first Chern class $c_1(\mathcal{A}_p)$ contained in $\mathcal{L}_p$ and primitive in $\textrm{NS}(\mathcal{X}_p)$. 
\end{itemize}
\end{definition}

Notice that the above definition is strictly stronger than requiring that each K3 surface in the family be polarized by the lattice $L$---it is important that monodromy acts trivially on $L$. 

In order to compute the weight two $\mathbf{Z}$-VHS underlying a lattice-polarized family $\mathcal{X}\to A$ of K3 surfaces using GVLSs, we need an internal fibration structure. 
To make the most use out of the assumption that we have a lattice-polarized family, we insist that we choose an internal fibration structure for which the associated local system of essential lattices is contained in the trivial local system $\mathcal{L}$. 


\begin{proposition}\label{prop:latticepolarizedstructure}
Let $\mathcal{X}\to U\times A-D$ be a family of elliptic curves for which $\mathcal{X}\to A$ is an $L$-polarized family of K3-surfaces. Let $\mathcal{V}$ be the weight one GVLS corresponding to the first cohomology of the elliptic curves. 
Suppose that, for each $p\in A$, the essential lattice of $\mathcal{X}_p$ is contained in $\mathcal{L}_p\cong L$. 
Let $\mathcal{W}$ denote the parabolic cohomology of $\mathcal{V}$, and $\mathbf{T}$ the local system of transcendental lattices. 
If $\mathbf{T}^{\pi_1(A)}=\{0\}$, then we have
$$\mathbf{T}=\mathcal{W}\cap\left(\mathcal{W}_\mathbf{Q}^{\pi_1(A)}\right)^\perp.$$
\end{proposition}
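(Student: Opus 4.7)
The plan is to reduce the statement to a fibrewise application of Proposition \ref{Prop:EllipticSurfacePCohomStructure}, combined with a monodromy analysis on the two summands of the resulting decomposition. First, I would apply Proposition \ref{Prop:EllipticSurfacePCohomStructure} at each $p \in A$ to the internal elliptic surface $\mathcal{X}_p \to U_p$, obtaining an orthogonal decomposition
\[
\mathcal{W}_{\mathbf{Q},p} = L(\mathcal{X}_p)_\mathbf{Q} \oplus \mathbf{T}(\mathcal{X}_p)_\mathbf{Q}
\]
together with the integral identification $\mathbf{T}(\mathcal{X}_p) = \mathcal{W}_p \cap L(\mathcal{X}_p)_\mathbf{Q}^\perp$. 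Since the $r$-configuration $(X,D)$ is locally topologically trivial over $A$, the trivial lattices $T(\mathcal{X}_p)$ assemble into a local subsystem of $\mathcal{W}$, and therefore so do the essential lattices $L(\mathcal{X}_p)$ and the transcendental lattices $\mathbf{T}(\mathcal{X}_p)$. This promotes the fibrewise decomposition to an orthogonal splitting of local systems $\mathcal{W}_\mathbf{Q} = \mathbf{L}_\mathbf{Q} \oplus \mathbf{T}_\mathbf{Q}$, where $\mathbf{L}$ is the local system of essential lattices.

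Next I would exploit the lattice-polarization hypothesis. By assumption $\mathbf{L}_p = L(\mathcal{X}_p) \subseteq \mathcal{L}_p$ for every $p \in A$, and since $\mathcal{L}$ is trivial as a local system, monodromy acts as the identity on each fibre $\mathcal{L}_p$ and hence on the subspace $\mathbf{L}_p$. Thus $\mathbf{L}$ is itself a trivial local subsystem of $\mathcal{W}$, so $\mathbf{L}_\mathbf{Q} \subseteq \mathcal{W}_\mathbf{Q}^{\pi_1(A)}$. Taking $\pi_1(A)$-invariants of the orthogonal splitting and combining with the standing hypothesis $\mathbf{T}^{\pi_1(A)} = 0$ yields
\[
\mathcal{W}_\mathbf{Q}^{\pi_1(A)} = \mathbf{L}_\mathbf{Q}^{\pi_1(A)} \oplus \mathbf{T}_\mathbf{Q}^{\pi_1(A)} = \mathbf{L}_\mathbf{Q}.
\]
Consequently $\left(\mathcal{W}_\mathbf{Q}^{\pi_1(A)}\right)^\perp = \mathbf{L}_\mathbf{Q}^\perp$ inside $\mathcal{W}_\mathbf{Q}$, and intersecting with the integral lattice $\mathcal{W}$ gives $\mathcal{W} \cap \left(\mathcal{W}_\mathbf{Q}^{\pi_1(A)}\right)^\perp = \mathcal{W} \cap \mathbf{L}_\mathbf{Q}^\perp$, which equals $\mathbf{T}$ by the fibrewise statement from Proposition \ref{Prop:EllipticSurfacePCohomStructure}.

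The main technical obstacle is the first step: verifying rigorously that the pointwise decomposition from Proposition \ref{Prop:EllipticSurfacePCohomStructure} really does lift to a decomposition of \emph{local systems}, i.e., that the essential and transcendental lattices of the internal elliptic surfaces $\mathcal{X}_p$ form honest local subsystems of $\mathcal{W}$ rather than merely continuous families of subspaces that could a priori jump. This reduces to checking that the trivial lattice, generated by the zero section, a general fibre class, and the fibre components of the singular fibres not meeting the zero section, varies as a local subsystem of $R^2\pi^A_*\mathbf{Z}$. This is where the smoothness assumptions built into the definition of an $r$-configuration are essential: the singular fibre configuration of each internal fibration is locally constant on $A$, so the number and Kodaira type of the components are preserved under monodromy, and the trivial lattice does indeed form a local subsystem. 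Once this is in hand the rest of the proof is a formal consequence of the hypotheses.
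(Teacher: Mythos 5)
Your proposal is correct and follows essentially the same route as the paper: the paper's proof simply asserts that the hypotheses force $\mathcal{W}_\mathbf{Q}^{\pi_1(A)}$ to equal the essential lattice tensored with $\mathbf{Q}$ and then invokes Proposition \ref{Prop:EllipticSurfacePCohomStructure}, which is exactly the identification you establish via the splitting $\mathcal{W}_\mathbf{Q}=\mathbf{L}_\mathbf{Q}\oplus\mathbf{T}_\mathbf{Q}$, the triviality of $\mathbf{L}$ inside $\mathcal{L}$, and the hypothesis $\mathbf{T}^{\pi_1(A)}=0$. Your additional care about the essential and transcendental lattices genuinely forming local subsystems is a detail the paper leaves implicit, and your justification of it is sound.
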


\begin{proof}
The assumptions above imply that $\mathcal{W}_\mathbf{Q}^{\pi_1(A)}$ is equal to the essential lattice tensored with $\mathbf{Q}$. The result now follows from Proposition \ref{Prop:EllipticSurfacePCohomStructure}.
\end{proof}

\begin{definition}
Let $\mathcal{X}^U\to U$ be an $M_N$-polarized family of K3 surfaces, and let $\mathcal{M}_{M_N}$ denote the compact moduli space of $M_N$-polarized K3 surfaces. 
Then, the \emph{generalized functional invariant} is the rational map $g\colon U\to \mathcal{M}_N$ defined by sending each point $p$ to the point in moduli corresponding to the fibre $\mathcal{X}_p$. 
\end{definition}

\begin{theorem*}[\cite{doran_calabi-yau_2017}]
Suppose $N\geq 2$. Let $\pi^U\colon\mathcal{X}\to U$ denote a non-isotrivial $M_N$-polarized family of K3 surfaces over a quasi-projective curve $U$, such that the N\'{e}ron-Severi group of a general fibre of $\mathcal{X}^U$ is isomorphic to $M_N$. 
Then $\pi^U\colon\mathcal{X}^U\to U$ is uniquely determined (up to isomorphism) by its generalized functional invariant map $g\colon U\to\mathcal{M}_{M_N}$. 

\end{theorem*}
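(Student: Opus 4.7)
The plan is to reduce this uniqueness assertion to a Torelli-type theorem via a period-map analysis of the weight-two variation of Hodge structure $R^2\pi^U_*\mathbf{Z}$. First, I would unpack the Hodge-theoretic content of an $M_N$-polarized family: the lattice polarization supplies a trivial local sub-system $\mathcal{L}\cong M_N$ of rank $19$ inside $R^2\pi^U_*\mathbf{Z}$, and because $\textrm{NS}(\mathcal{X}_p)=M_N$ for a generic fibre, the orthogonal complement of $\mathcal{L}_\mathbf{Q}$ in $R^2\pi^U_*\mathbf{Q}$ is a rank-three local system $\mathbf{T}_\mathbf{Q}$ carrying a non-trivial polarized variation of Hodge structure of K3 type on the transcendental lattice $T_N:=M_N^\perp\subset\Lambda_{K3}$. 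Non-isotriviality of the family ensures that this transcendental variation is genuinely varying, so that its period map is locally injective.

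Second, I would reinterpret the generalized functional invariant in period-theoretic terms. The global Torelli theorem for $M_N$-polarized K3 surfaces identifies $\mathcal{M}_{M_N}$ with an arithmetic quotient $\mathcal{D}_{T_N}/\Gamma_N$, where $\mathcal{D}_{T_N}$ is the period domain of polarized weight-two Hodge structures on $T_N$ and $\Gamma_N$ is the image in $\textrm{O}(T_N)$ of the subgroup of $\textrm{O}(\Lambda_{K3})$ fixing $M_N$ pointwise. Under this identification, $g\colon U\to\mathcal{M}_{M_N}$ is precisely the period map attached to the transcendental variation $\mathbf{T}$. Consequently, two families $\mathcal{X}^U,\mathcal{X}'^U$ with the same generalized functional invariant have transcendental period maps agreeing modulo $\Gamma_N$, and after passing to a finite \'etale cover of $U$ that trivializes this ambiguity one obtains an isomorphism $\mathbf{T}\cong\mathbf{T}'$ of polarized integral variations of Hodge structure. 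Combined with the canonical identification $\mathcal{L}\cong M_N\cong\mathcal{L}'$ and the compatibility of the primitive ample classes guaranteed by the definition of a lattice-polarized family, this upgrades to an isomorphism $R^2\pi^U_*\mathbf{Z}\cong R^2\pi'^U_*\mathbf{Z}$ of polarized $\mathbf{Z}$-VHS that respects both the lattice polarization and the relative polarization class.

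Finally, I would promote this Hodge-theoretic isomorphism to a genuine isomorphism of K3 families via a relative Torelli theorem. Fibrewise, Torelli guarantees that two $M_N$-polarized K3 surfaces with the same periods and the same primitive ample class in $M_N$ admit a unique isomorphism respecting these data; the rigidification from the lattice polarization is precisely what kills the automorphism ambiguity that could otherwise obstruct assembling the fibrewise isomorphisms into a morphism of schemes. The main obstacle is the representability of the relative $M_N$-polarized isomorphism functor and the descent of the resulting scheme-theoretic isomorphism back from the auxiliary \'etale cover to $U$ itself; both are handled by appealing to the standard construction (e.g.\ via Viehweg) of a fine moduli space of $M_N$-polarized K3 surfaces on a suitable finite cover of $\mathcal{M}_{M_N}$, after which uniqueness on $U$ follows by descent along the triviality of $\mathcal{L}$.
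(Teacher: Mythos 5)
This statement is quoted by the paper from \cite{doran_calabi-yau_2017} and no proof of it appears in the paper itself, so there is no internal argument to compare against; I can only assess your outline on its own terms. Your overall strategy --- identify $\mathcal{M}_{M_N}$ with a period-domain quotient via Dolgachev's global Torelli theorem for lattice-polarized K3 surfaces, interpret the generalized functional invariant as the period map of the rank-three transcendental variation, recover the polarized $\mathbf{Z}$-VHS, and then promote the Hodge-theoretic identification to an isomorphism of families --- is indeed the expected route and matches the structure of the argument in the cited reference.

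There is, however, a genuine gap: nowhere do you use the hypothesis $N\geq 2$, and a correct proof must, because the statement is false for $N=1$. The place it enters is exactly the step you wave through with ``Torelli guarantees that two $M_N$-polarized K3 surfaces with the same periods and the same primitive ample class in $M_N$ admit a \emph{unique} isomorphism respecting these data.'' Uniqueness of that fibrewise isomorphism is equivalent to the triviality of the group of automorphisms of the K3 surface acting trivially on the polarizing lattice $M_N$. Such an automorphism acts on the transcendental lattice $T(X)$ by a Hodge isometry, generically $\pm\mathrm{id}$; the isometry $(-\mathrm{id}_{T},\ \mathrm{id}_{M_N})$ extends to $\Lambda_{K3}$ precisely when $-\mathrm{id}=\mathrm{id}$ on the discriminant group $A_{M_N}\cong\mathbf{Z}/2N\mathbf{Z}$, i.e.\ precisely when $N=1$. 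Thus for $N=1$ the generic $M_1$-polarized K3 carries a nontrivial polarization-preserving involution, one can twist a family by a nontrivial class in $H^1(U,\mathbf{Z}/2\mathbf{Z})$ without changing the generalized functional invariant, and uniqueness fails; for $N\geq 2$ the automorphism group is trivial and the fibrewise isomorphisms are unique, which is what lets them glue and what makes your final descent from the auxiliary \'etale cover equivariant for free. Until you carry out this discriminant-group computation (or an equivalent argument), both the gluing step and the descent step are unsupported, so the proposal as written does not close. A secondary, smaller point: passing from equality of period maps into the orbifold quotient $\mathcal{D}_{T_N}/\Gamma_N$ to an isomorphism of integral local systems $\mathbf{T}\cong\mathbf{T}'$ requires comparing monodromy representations up to conjugacy in $\Gamma_N$ and some care at the orbifold points of $X_0(N)^+$; this is fixable but should be said, and is again where non-isotriviality (which you invoke only for local injectivity) actually does work.
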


The moduli spaces $\mathcal{M}_N$ can be naturally identified with the modular curves $X_0(N)^+$, which are the quotients of $X_0(N)$ by the Fricke involution. 
A consequence of the above theorem is that any $M_N$-polarized family of K3 surfaces, for $N\geq 2$, is the pull-back of a fundamental modular family $\mathcal{X}_N\to X_0(N)^+$ under the generalized functional invariant. 
There is an analogous result on the level of Hodge theory. 
Let $\mathbf{V}_N^+$ denote the weight 2 $\mathbf{Z}$-VHS on $X_0(N)^+$ underlying the transcendental lattices of the K3 surfaces in $\mathcal{X}_N$. 
Then we have:
\begin{theorem*}[\cite{doran_calabi-yau_2017}]
Assumptions and notation as in the previous theorem, the variation of Hodge structure on $\mathbf{T}(\mathcal{X}^U)$ is the pullback of $\mathbf{V}_N^+$ by the generalized functional invariant map $g\colon U\to X_0(N)^+$.
\end{theorem*}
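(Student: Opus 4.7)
The plan is to bootstrap directly off the previous theorem. Since $\pi^U\colon\mathcal{X}^U\to U$ is an $M_N$-polarized family of K3 surfaces with generalized functional invariant $g\colon U\to\mathcal{M}_{M_N}\cong X_0(N)^+$, the previous theorem yields an isomorphism of $M_N$-polarized families $\mathcal{X}^U\cong g^*\mathcal{X}_N$ over $U$. First I would use this isomorphism to identify the full weight-two polarized $\mathbf{Z}$-VHS: since the higher direct image $R^2(\pi_N)_*\mathbf{Z}$ of the modular family $\mathcal{X}_N\to X_0(N)^+$ is a polarized variation of Hodge structure, and formation of higher direct images commutes with pullback along the smooth base-change $g$, there is a canonical isomorphism $R^2\pi^U_*\mathbf{Z}\cong g^*R^2(\pi_N)_*\mathbf{Z}$ of polarized $\mathbf{Z}$-VHS on $U$.

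Second, I would extract the transcendental piece. In both families, the $M_N$-polarization data is recorded by a trivial rank-$19$ local subsystem (call them $\mathcal{L}$ and $\mathcal{L}_N$ respectively) sitting primitively inside the second cohomology local system, and the isomorphism of $M_N$-polarized families from the previous theorem carries $\mathcal{L}$ onto $g^*\mathcal{L}_N$ by definition of a morphism of lattice-polarized families. The local system of transcendental lattices is then the primitive orthogonal complement of this trivial subsystem with respect to the polarization, equipped with the induced Hodge filtration. Since taking primitive orthogonal complements of a trivial sub-VHS inside an ambient polarized VHS commutes with pullback of $\mathbf{Z}$-VHS, we obtain $\mathbf{T}(\mathcal{X}^U)\cong g^*\mathbf{T}(\mathcal{X}_N)=g^*\mathbf{V}_N^+$ as weight-two $\mathbf{Z}$-VHS on $U$.

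The main obstacle is essentially bookkeeping, namely unwinding the definitions and verifying compatibility of the polarization data: specifically, that the isomorphism supplied by the previous theorem really is an isomorphism of $M_N$-polarized families, so that it identifies the marking subsystems, and that the primitive-orthogonal-complement construction is functorial on the category of polarized $\mathbf{Z}$-VHS equipped with a marked trivial sublattice. Once both are in place, the compatibility of the Hodge filtration with pullback is automatic from the definition of a variation of Hodge structure, and the statement follows. As an independent consistency check, one can verify that the rank, signature, and discriminant of $\mathbf{V}_N^+$ obtained via pullback match the transcendental lattice invariants produced by the GVLS recipe of Proposition \ref{prop:latticepolarizedstructure} applied to any internal elliptic fibration of $\mathcal{X}_N$, which is precisely the computational workflow used elsewhere in this section.
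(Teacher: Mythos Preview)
The paper does not give its own proof of this statement: it is quoted verbatim as a result from \cite{doran_calabi-yau_2017} and is used as input for the computations that follow, so there is no in-paper argument to compare against. Your sketch is the natural one --- deduce $\mathcal{X}^U\cong g^*\mathcal{X}_N$ from the preceding cited theorem, apply smooth base change to identify $R^2\pi^U_*\mathbf{Z}\cong g^*R^2(\pi_N)_*\mathbf{Z}$ as polarized $\mathbf{Z}$-VHS, and then pass to the primitive orthogonal complement of the marked trivial sublattice --- and it is correct in outline; the only substantive point you flag (that the isomorphism from the previous theorem respects the $M_N$-markings) is exactly what one must check, and this is handled in the cited reference rather than here.
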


These results reduce the classification of Calabi-Yau threefolds fibred non-isotrivially by $M_N$-polarized K3 surfaces to the classification rational maps with specified ramification profiles; the classification is carried out in \cite{doran_calabi-yau_2017}. 
To obtain non-rigid Calabi-Yau threefolds, one must insist that $N\in\{2,3,4,5,6,7,8,9,11\}$. 
The authors in \cite{doran_calabi-yau_2017} have tabulated the kinds of generalized functional invariants that are compatible with these values of $N$. 
From here, one can compute the associated VHS on the Calabi-Yau threefolds in terms of the data of the generalized functional invariants and the initial local system $\mathbf{V}_N^+$. 
For their own purposes, it was enough to know the local monodromy matrices associated to the VHS $\mathbf{V}_N^+$. 
However, if one wants a more complete understanding of the VHS underlying the threefolds, one would need the global monodromy representation and interesection matrix for $\mathbf{V}_N^+$. 

In the remainder of this section, we compute the local system data associated to $\mathbf{V}_N^+$ for $N\in\{2,3,4,5,6,7,8,9,11\}$ by starting with the universal families $\mathcal{X}_N\to X_0(N)^+$, finding an internal fibration structure, and then computing the parabolic cohomology of the corresponding GVLS. 

We parameterize the modular curve $X_0(N)^+$ with a coordinate $\lambda$ in such a way that the orbifold $X_0(N)^+$ has a point of infinite order at $\lambda=\infty$, a point of order $a$ at $\lambda=0$ and points of order $2$ at $\lambda=\lambda_1,\dots, \lambda_q$. 
These parameterizations are tabulated in Table \ref{Table:X0(N)}.
\begin{table}
\centering
\begin{tabular}{||c|c|l||}
\hline
{ $N$}
&
{ Orbifold Type}
& 
{ $\lambda_1,\dots, \lambda_q$}
\\
\hline
$2$&$(2,4,\infty)$&$256$\\
\hline
$3$&$(2,6,\infty)$&$108$\\
\hline
$4$&$(2,\infty,\infty)$&$64$\\
\hline
$5$&$(2,2,2,\infty)$&$22+10\sqrt{5},22-10\sqrt{5}$\\
\hline
$6$&$(2,2,\infty,\infty)$&$17+12\sqrt{2},17-12\sqrt{2}$\\
\hline
$7$&$(2,2,3,\infty)$&$-1,27$\\
\hline
$8$&$(2,2,\infty,\infty)$&$12+8\sqrt{2},12-8\sqrt{2}$\\
\hline
$9$&$(2,2,\infty,\infty)$&$9+6\sqrt{3},9-6\sqrt{3}$\\
\hline
$11$&$(2,2,2,2,\infty)$&Roots of $\lambda^3-20\lambda^2+56\lambda-44$\\
\hline
\end{tabular}
\caption{Parameterization of $X_0(N)^+$}\label{Table:X0(N)}

\end{table}
For these values of $N$, the authors of \cite{doran_calabi-yau_2017} have computed models for the universal families $\mathcal{X}_N$. 
We reproduce these models in Table \ref{tab:X_NFam}.

\begin{table}
    \centering
    \resizebox{\textwidth}{!}{
    \begin{tabular}{||c|c|l||}
    \hline
         $N$&Ambient Space&Family  \\
         \hline\hline
         $2$&$\mathbf{P}^3[x,y,z,w]$&$w^4+\lambda xyz(x+y+z-t)=0$ \\
         \hline
         $3$&$\mathbf{P}^1[r,s]\times\mathbf{P}^2[x,y,z]$&$s^2z^3+\lambda r(r-s)xy(z-x-y)=0$\\
         \hline
         $4$&$\prod_{i=1}^3\mathbf{P}^1[r_i,s_i]$&$s_1^2s_2^2s_3^2-\lambda r_1(s_1-r_1)r_2(s_2-r_2)r_3(s_3-r_3)=0$\\
         \hline
         $5$&$\mathbf{P}^3[x,y,z,w]$&$(w^2+xw+yw+zw+xy+xz+yz)^2-\lambda xyzw=0$\\
         \hline
         $6$&$\mathbf{P}^3[x,y,z,w]$&$(x+z+w)(x+y+z+w)(z+w)(y+z)-\lambda xyzw=0$\\
         \hline
         $7$&$\mathbf{P}^3[x,y,z,w]$&$(x+y+z+w)(y+z+w)(z+w)^2+(x+y+z+w)^2yz-\lambda xyzw=0$\\
         \hline
         $8$&$\mathbf{P}^3[x,y,z,w]$&$(x+y+z+w)(x+w)(y+w)(z+w)-\lambda xyzw=0$\\
         \hline
         $9$&$\mathbf{P}^3[x,y,z,w]$&$(x+y+z)(xw^2+xzw+xyw+xyz+zw^2+yw^2+yzw)-\lambda xyzw=0$\\
         \hline
         $11$&$\mathbf{P}^3[x,y,z,w]$&$(z+w)(x+y+w)(xy+zw)+x^2y^2+xyz^2+3xyzw-\lambda xyzw=0$\\
         \hline
    \end{tabular}}
    \caption{The Families $\mathcal{X}_N$}
    \label{tab:X_NFam}
\end{table}

In order to compute the VHS $\mathbf{V}_N^+$, we start by constructing an internal elliptic fibration on each of the families $\mathcal{X}_N$. 
For $N=2,3,4$, an internal elliptic fibration structure is given by a projection to $\mathbf{P}^1$. 
For all other $N$, we find elliptic fibration structures using the following method described in \cite{ilten_toric_2013}. 
Each quartic surface in these families contains a line (all such lines are described in \cite{ilten_toric_2013}); 
by subtracting this line from the pencil of hyperplane sections containing it, we obtain a pencil of cubic curves on the fibres;
blowing up base points and resolving singularities gives rise to the desired elliptic surfaces. 

In principle, any choice of internal elliptic fibration will do for our purposes. 
In practice, we want to choose the simplest such so that we can most easily determine the braiding map when computing the monodromy action on parabolic cohomology. 
In Table \ref{tab:X_N-InternalFibratiton}, we provide a choice of internal fibration structure in terms of its Weierstrass invariants as functions of $\lambda\in X_0(N)^+$ and $t\in\mathbf{P}^1_t$.

\begin{table}
    \centering
    \resizebox{\textwidth}{!}{
    \begin{tabular}{||c|c|l||}
    \hline
         $N$&Line/Method&Weierstrass Invariants  \\
         \hline
         $2$&Projection to $x$-line&$g_2=3 \cdot t^{3} \cdot \lambda^{3} \cdot (\lambda t - 48 t^{2} - 96 t - 48)$\\
         &&$g_3=t^{5} \cdot \lambda^{5} \cdot (-\lambda t + 72 t^{2} + 144 t + 72)$\\
         \hline
         $3$&Projection to $\mathbf{P}^1[r,s]$&$g_2=3 \cdot t^{3} \cdot \lambda^{3} \cdot (\lambda t - 24 t^{2} - 48 t - 24)$\\
         &&$g_3=t^{4} \cdot \lambda^{4} \cdot (-\lambda^{2} t^{2} + 36 \lambda t^{3} - 216 t^{4} + 72 \lambda t^{2} - 864 t^{3} + 36 \lambda t - 1296 t^{2} - 864 t - 216)$\\
         \hline
         $4$&Projection to $\mathbf{P}^1[r_1,s_1]$&$g_2=3 \cdot t^{2} \cdot \lambda^{2} \cdot (\lambda^{2} t^{2} - 16 \lambda t^{3} + 16 t^{4} - 32 \lambda t^{2} + 64 t^{3} - 16 \lambda t + 96 t^{2} + 64 t + 16)$\\
         &&$g_3=t^{3} \cdot \lambda^{3} \cdot (\lambda t - 8 t^{2} - 16 t - 8) \cdot (-\lambda^{2} t^{2} + 16 \lambda t^{3} + 8 t^{4} + 32 \lambda t^{2} + 32 t^{3} + 16 \lambda t + 48 t^{2} + 32 t + 8)$\\
         \hline
         $5$&Contains &$g_2=12\cdot t^{2} \cdot \lambda^{2} \cdot (\lambda^{2} t^{2} - 16 \lambda t^{3} + 16 t^{4} - 24 \lambda t^{2} + 48 t^{3} - 8 \lambda t + 64 t^{2} + 48 t + 16)$\\
         &$\{x=y+w=0\}$&$g_3=8\cdot t^{3} \cdot \lambda^{3} \cdot (\lambda t - 8 t^{2} - 12 t - 4) \cdot (-\lambda^{2} t^{2} + 16 \lambda t^{3} + 8 t^{4} + 24 \lambda t^{2} + 24 t^{3} + 8 \lambda t + 8 t^{2} - 24 t - 16)$\\
         \hline

         $6$&Contains &$g_2=3 \cdot t^{2} \cdot (\lambda^{4} t^{2} - 12 \lambda^{3} t^{3} + 14 \lambda^{2} t^{4} + 12 \lambda t^{5} + t^{6} - 20 \lambda^{3} t^{2} + 52 \lambda^{2} t^{3} + 52 \lambda t^{4}$\\
         &&$\ \ \ + 4 t^{5} - 8 \lambda^{3} t + 78 \lambda^{2} t^{2} + 84 \lambda t^{3} + 6 t^{4} + 56 \lambda^{2} t + 60 \lambda t^{2} + 4 t^{3} + 16 \lambda^{2} + 16 \lambda t + t^{2})$\\
        &$\{x=y+z+w=0\}$& $g_3=t^{3} \cdot \Big(\lambda^{6} t^{3} - 18 \lambda^{5} t^{4} + 75 \lambda^{4} t^{5} + 75 \lambda^{2} t^{7} + 18 \lambda t^{8} + t^{9} - 30 \lambda^{5} t^{3} + 258 \lambda^{4} t^{4}$\\
        &&\ \ \ $- 60 \lambda^{3} t^{5} + 480 \lambda^{2} t^{6} + 114 \lambda t^{7} + 6 t^{8} - 12 \lambda^{5} t^{2} + 339 \lambda^{4} t^{3} - 324 \lambda^{3} t^{4} + 1290 \lambda^{2} t^{5} + 300 \lambda t^{6} + 15 t^{7} $\\
        &&\ \ \ $+ 204 \lambda^{4} t^{2} - 676 \lambda^{3} t^{3}+ 1860 \lambda^{2} t^{4} + 420 \lambda t^{5} + 20 t^{6} + 48 \lambda^{4} t - 684 \lambda^{3} t^{2} + 1515 \lambda^{2} t^{3} + 330 \lambda t^{4} + 15 t^{5}$\\
        &&\ \ \ $- 336 \lambda^{3} t + 660 \lambda^{2} t^{2} + 138 \lambda t^{3} + 6 t^{4} - 64 \lambda^{3} + 120 \lambda^{2} t + 24 \lambda t^{2} + t^{3}\Big)$\\
\hline

$7$&Contains&$g_2=3\cdot t^{2} \cdot \lambda^{2} \cdot (\lambda^{2} t^{2} - 8 \lambda t^{3} + 16 t^{4} - 16 \lambda t^{2} + 40 t^{3} - 8 \lambda t + 48 t^{2} + 40 t + 16)$\\
&$\{y=z+w=0\}$&$g_3=\lambda^{2} \cdot t^{3} \cdot \Big(-\lambda^{4} t^{3} + 12 \lambda^{3} t^{4} - 48 \lambda^{2} t^{5} + 64 \lambda t^{6} + 24 \lambda^{3} t^{3} - 156 \lambda^{2} t^{4} + 240 \lambda t^{5} + 12 \lambda^{3} t^{2} $ \\
&&$\ \ \ - 216 \lambda^{2} t^{3} +384 \lambda t^{4} - 216 t^{5} - 156 \lambda^{2} t^{2} + 416 \lambda t^{3}$\\
&&$\ \ \ - 864 t^{4} - 48 \lambda^{2} t + 384 \lambda t^{2} - 1296 t^{3} + 240 \lambda t - 864 t^{2} + 64 \lambda - 216 t\Big)$\\
\hline

$8$&Contains&$g_2=3\cdot (\lambda^{4} t^{4} - 12 \lambda^{3} t^{5} + 14 \lambda^{2} t^{6} + 12 \lambda t^{7} + t^{8} - 16 \lambda^{3} t^{4} + 48 \lambda^{2} t^{5} + 32 \lambda t^{6}$\\
&&$\ \ \ - 4 \lambda^{3} t^{3} + 60 \lambda^{2} t^{4} + 20 \lambda t^{5} - 4 t^{6} + 32 \lambda^{2} t^{3} - 16 \lambda t^{4} + 6 \lambda^{2} t^{2} - 28 \lambda t^{3} + 6 t^{4} - 16 \lambda t^{2} - 4 \lambda t - 4 t^{2} + 1)$\\
&$\{x=y+z+w=0\}$&$g_3=\lambda^{6} t^{6} - 18 \lambda^{5} t^{7} + 75 \lambda^{4} t^{8} + 75 \lambda^{2} t^{10} + 18 \lambda t^{11} + t^{12} - 24 \lambda^{5} t^{6} + 216 \lambda^{4} t^{7} - 120 \lambda^{3} t^{8} + 360 \lambda^{2} t^{9}  $\\
&&$\ \ \ + 48 \lambda t^{10}- 6 \lambda^{5} t^{5} + 222 \lambda^{4} t^{6} - 444 \lambda^{3} t^{7} + 720 \lambda^{2} t^{8} - 6 \lambda t^{9} - 6 t^{10} +96 \lambda^{4} t^{5} - 632 \lambda^{3} t^{6}+816 \lambda^{2} t^{7}$\\
&&$\ \ \ - 120 \lambda t^{8} + 15 \lambda^{4} t^{4} - 432 \lambda^{3} t^{5} + 654 \lambda^{2} t^{6} - 84 \lambda t^{7} + 15 t^{8} - 144 \lambda^{3} t^{4} + 456 \lambda^{2} t^{5} + 72 \lambda t^{6} - 20 \lambda^{3} t^{3}$\\
&&$\ \ \  + 264 \lambda^{2} t^{4} + 108 \lambda t^{5} - 20 t^{6} + 96 \lambda^{2} t^{3} + 24 \lambda t^{4} + 15 \lambda^{2} t^{2} - 30 \lambda t^{3} + 15 t^{4} - 24 \lambda t^{2} - 6 \lambda t - 6 t^{2} + 1$\\
\hline

$9$&Contains&$g_2=3 \cdot (-\lambda + t + 3) \cdot t^{3} \cdot (-\lambda^{3} t + 3 \lambda^{2} t^{2} - 3 \lambda t^{3} + t^{4} + 9 \lambda^{2} t - 18 \lambda t^{2} + 9 t^{3} + 24 \lambda^{2} - 51 \lambda t + 27 t^{2} + 3 t)$\\
&$\{x+y+z=w=0\}$&$g_3=t^{4} \cdot (\lambda^{6} t^{2} - 6 \lambda^{5} t^{3} + 15 \lambda^{4} t^{4} - 20 \lambda^{3} t^{5} + 15 \lambda^{2} t^{6} - 6 \lambda t^{7} + t^{8} - 18 \lambda^{5} t^{2}$\\
&&$\ \ \ + 90 \lambda^{4} t^{3} - 180 \lambda^{3} t^{4} + 180 \lambda^{2} t^{5} - 90 \lambda t^{6} +18 t^{7} - 36 \lambda^{5} t + 279 \lambda^{4} t^{2} - 756 \lambda^{3} t^{3} +954 \lambda^{2} t^{4} - 576 \lambda t^{5} +$\\
&&$\ \ \  135 t^{6} + 324 \lambda^{4} t - 1476 \lambda^{3} t^{2} + 2484 \lambda^{2} t^{3} - 1836 \lambda t^{4} + 504 t^{5} + 216 \lambda^{4} - 1404 \lambda^{3} t$\\
&&$\ \ \ + 3051 \lambda^{2} t^{2} - 2754 \lambda t^{3} + 891 t^{4} + 540 \lambda^{2} t - 1026 \lambda t^{2} + 486 t^{3} - 27 t^{2})$\\
\hline

$11$&Contains&$g_2=3\cdot (\lambda^{4} t^{4} - 8 \lambda^{3} t^{5} + 16 \lambda t^{7} + 16 t^{8} - 16 \lambda^{3} t^{4} + 48 \lambda^{2} t^{5} + 48 \lambda t^{6} + 32 t^{7} - 4 \lambda^{3} t^{3}+ 88 \lambda^{2} t^{4} - 16 \lambda t^{5}$\\
&&$\ \ \  + 40 \lambda^{2} t^{3} - 112 \lambda t^{4} - 32 t^{5} + 6 \lambda^{2} t^{2} - 88 \lambda t^{3} - 16 t^{4} - 32 \lambda t^{2} + 16 t^{3} - 4 \lambda t + 24 t^{2} + 8 t + 1)$\\
&$\{x+z=w=0\}$&$g_3=(-\lambda^{2} t^{2} + 4 \lambda t^{3} + 8 t^{4} + 8 \lambda t^{2} + 8 t^{3} + 2 \lambda t - 4 t^{2} - 4 t - 1) \cdot (-\lambda^{4} t^{4} + 8 \lambda^{3} t^{5} + 8 \lambda t^{7} + 8 t^{8} +$\\
&&$\ \ \  16 \lambda^{3} t^{4} - 48 \lambda^{2} t^{5} + 24 \lambda t^{6} + 16 t^{7} + 4 \lambda^{3} t^{3} - 88 \lambda^{2} t^{4} + 88 \lambda t^{5} - 40 \lambda^{2} t^{3}$\\
&&$\ \ \ + 136 \lambda t^{4} - 16 t^{5} - 6 \lambda^{2} t^{2} + 88 \lambda t^{3} - 8 t^{4} + 32 \lambda t^{2} - 16 t^{3} + 4 \lambda t - 24 t^{2} - 8 t - 1)$\\
\hline
    \end{tabular}}
    \caption{Internal Fibration Structures on $\mathcal{X}_N$}
    \label{tab:X_N-InternalFibratiton}
\end{table}

\begin{proposition}\label{prop:VNVHS_Calc}
Let $\mathcal{X}_N\to X_0(N)^+$ be the universal family of $M_N$-polarized K3 surfaces, and let $\mathbf{V}_N^+$ denote the weight $2$ $\mathbf{Z}$-VHS corresponding the essential part of $R^2\pi_*\mathbf{Z}$. 
Then, the Picard-Fuchs equation for $\mathbf{V}_N^+$ is found in Table \ref{tab:PFEquations_VN}, and the global monodromy representation is tabulated in Table \ref{tab:VN_Monodromy}. 
The Picard-Fuchs equations are normalized in such a way that the characteristic exponents at $\lambda=\infty$ are all equal to $0$ and the characteristic exponents at $\lambda=\lambda_1,\dots, \lambda_q$ are equal to $0,\frac{1}{2},1$. 
The monodromy representations are chosen with respect to the standard loops on the $q+2$-punctured sphere with a base point chosen in the upper half-plane and with respect to an integral basis of parabolic cocyles for which the intersection pairing is given by
$$Q=\left(\begin{array}{rrr}2N&0&0\\
0&0&1\\
0&1&0\end{array}\right).$$
\end{proposition}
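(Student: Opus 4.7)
The plan is to compute $\mathbf{V}_N^+$ for each $N$ by running the Dettweiler--Wewers algorithm of Section~\ref{sec:computing} on the geometric variation of local systems attached to the internal elliptic fibration of Table~\ref{tab:X_N-InternalFibratiton}, then isolating the transcendental summand using Proposition~\ref{prop:latticepolarizedstructure}. The Picard--Fuchs equation for the transcendental piece is extracted in parallel from the Weierstrass data.

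First, for fixed $\lambda$, the Weierstrass data $(g_2(t,\lambda), g_3(t,\lambda))$ determines an elliptic surface $\mathcal{X}_{N,\lambda} \to \mathbf{P}^1_t$ whose singular fibre locations are the roots of $\Delta(t,\lambda) = g_2^3 - 27 g_3^2$, with Kodaira types read off from Table~\ref{Table:KodairaClass}. I would compute the initial homological invariant $\mathcal{V}_{\lambda_0}$, presented as an $r$-tuple $\mathbf{g} = (g_1,\dots,g_r) \in \SL_2(\mathbf{Z})^r$, by invoking Proposition~\ref{Prop:K-EquationClass}: pull back the monodromy \eqref{Monodromy:J=t} of the model \eqref{J=t Form} along the functional invariant $\mathcal{J}(t) = g_2^3/\Delta$ and then twist by $-1$ at the zeros and poles of $\lambda^2 = g_2/g_3$. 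With standard generators $\gamma_1, \dots, \gamma_s$ for the fundamental group of $X_0(N)^+$ punctured at the orbifold points of Table~\ref{Table:X0(N)}, the four-step procedure at the end of Section~\ref{sec:computing} then assembles the monodromy $\eta(\gamma_i) = \cls{\Phi}(\mathbf{g}, \ph(\gamma_i)) \cdot \cls{\Psi}(\mathbf{g}, h_i)$ of $\mathcal{W}$. By Proposition~\ref{prop:latticepolarizedstructure}, the trivial local subsystem $\mathcal{W}_\mathbf{Q}^{\pi_1(X_0(N)^+)}$ accounts exactly for the essential lattice of the generic fibre, and the transcendental local system is $\mathbf{T} = \mathcal{W} \cap (\mathcal{W}_\mathbf{Q}^{\pi_1})^\perp$. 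The cup-product formula from the theorem at the end of Section~\ref{sec:computing} then supplies the pairing on $\mathbf{T}$, which an explicit integral change of basis brings into the announced normal form $2N \oplus U$, thereby fixing the tabulated monodromy.

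In parallel, the Picard--Fuchs equation annihilating $\mathbf{V}_N^+$ can be produced by applying formula \eqref{Picard_Fuchs_EllSurface_WForm} to the Weierstrass invariants and then factoring out the fibration--internal piece, or equivalently by using the de Rham description at the end of Section~\ref{sec:parabolic_cohomology_of_elliptic} to promote the fibrewise $1$-forms $dX/Y$ to a global $(2,0)$-form on $\mathcal{X}_N$ and running Griffiths--Dwork. Normalizing so that the exponents at $\lambda = \infty$ equal $0$ and those at each $\lambda_j$ equal $0, \frac{1}{2}, 1$ then matches the local monodromy profile: unipotent at the cusp of $X_0(N)^+$ and order-two at the elliptic points (as expected for an $\textrm{I}_0^*$-like quadratic twist point on the base).

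The hard part will be computing the braiding map $\ph \colon \pi_1(X_0(N)^+, \lambda_0) \to A_{r-1}$, as already flagged in the introduction. Concretely, $\ph$ records how the root loci $t_1(\lambda), \dots, t_{r-1}(\lambda)$ of $\Delta(\cdot, \lambda)$ braid in $\mathbf{C}^{r-1}$ as $\lambda$ traverses each $\gamma_i$. My approach would be to numerically continue each singular-fibre root of $\Delta$ along each generating loop, plot the resulting trajectory in the configuration space of $r-1$ points in $\mathbf{C}$, and decompose it into a word in the Artin generators $\beta_1, \dots, \beta_{r-2}$ by tracking the sequence of adjacent-strand crossings and their signs. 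For the larger values of $N$ the discriminants have high $t$-degree and the resulting braid words are long and awkward; this is the delicate and genuinely non-mechanical step of the proof. Once each $\ph(\gamma_i)$ has been identified, the remaining linear algebra---computing $\Phi$, $\Psi$, their quotients, and diagonalizing $Q$---is entirely mechanical and readily automated in a computer algebra system, which produces the tables of the statement.
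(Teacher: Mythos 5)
Your proposal follows essentially the same route as the paper: Griffiths--Dwork for the Picard--Fuchs equations, the Dettweiler--Wewers algorithm applied to the internal fibration with the braiding map read off from the (numerically traced) motion of the singular fibres, and Proposition \ref{prop:latticepolarizedstructure} to cut out the transcendental summand and then normalize the pairing. The one step you leave implicit is that the algorithm only determines the \emph{projective} monodromy; the paper resolves the resulting sign ambiguity by using the characteristic exponents $0,\tfrac{1}{2},1$ at $\lambda_1,\dots,\lambda_q$ and $0,0,0$ at $\infty$ to force the traces $1+r$ and $3+r$ (with $r$ the Mordell--Weil rank) on the local monodromies of $\mathcal{W}$, which then pins down the honest matrices before restricting to $\mathbf{V}_N^+$.
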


\begin{table}
    \centering
    \resizebox{\textwidth}{!}{
    \begin{tabular}{||c|l||}
    \hline
        $N$ & Picard-Fuchs Equation  \\
        \hline
        \hline
        $2$ &$\frac{d^3f}{d\lambda^3}+\frac{3 \lambda - 384}{\lambda^{2} - 256 \lambda}\frac{d^2f}{d\lambda^2}+\frac{\lambda - 48}{\lambda^{3} - 256 \lambda^{2}}\frac{df}{d\lambda}+ \frac{24}{\lambda^{4} - 256 \lambda^{3}}f=0$\\
        \hline
        $3$&$\frac{d^3f}{d\lambda^3}+\frac{3 \lambda - 162}{\lambda^{2} - 108 \lambda}\frac{d^2f}{d\lambda^2}+ \frac{\lambda - 24}{\lambda^{3} - 108 \lambda^{2}}\frac{df}{d\lambda}+ \frac{12}{\lambda^{4} - 108 \lambda^{3}}f=0$\\
        \hline
        $4$&$\frac{d^3f}{d\lambda^3}+\frac{3 \lambda - 96}{\lambda^{2} - 64 \lambda}\frac{d^2f}{d\lambda^2}+\frac{\lambda - 16}{\lambda^{3} - 64 \lambda^{2}}\frac{df}{d\lambda}+ \frac{8}{\lambda^{4} - 64 \lambda^{3}}f=0$\\
        \hline
        $5$&$\frac{d^3}{d\lambda^3}+\frac{3 \lambda - 66}{\lambda^{2} - 44 \lambda - 16}\frac{d^2f}{d\lambda^2}+\frac{\lambda^{2} - 12 \lambda - 12}{\lambda^{4} - 44 \lambda^{3} - 16 \lambda^{2}}\frac{df}{d\lambda}+\frac{6 \lambda + 12}{\lambda^{5} - 44 \lambda^{4} - 16 \lambda^{3}}f=0$\\
        \hline
        $6$&$\frac{d^3f}{d\lambda^3}+\frac{3 \lambda - 51}{\lambda^{2} - 34 \lambda + 1}\frac{d^2f}{d\lambda^2}+\frac{\lambda^{2} - 10 \lambda + 1}{\lambda^{4} - 34 \lambda^{3} + \lambda^{2}}\frac{df}{d\lambda}+ \frac{5 \lambda - 1}{\lambda^{5} - 34 \lambda^{4} + \lambda^{3}}f=0$\\
        \hline
        $7$&$\frac{d^3f}{d\lambda^3}+\frac{3 \lambda - 39}{\lambda^{2} - 26 \lambda - 27}\frac{d^2f}{d\lambda^2}+\frac{\lambda^{2} - 8 \lambda - 24}{\lambda^{4} - 26 \lambda^{3} - 27 \lambda^{2}}\frac{df}{d\lambda}+\frac{4 \lambda + 24}{\lambda^{5} - 26 \lambda^{4} - 27 \lambda^{3}}f=0$\\
        \hline
        $8$&$\frac{df^3}{d\lambda^3}+\frac{3 \lambda - 36}{\lambda^{2} - 24 \lambda + 16}\frac{d^2f}{d\lambda^2}+\frac{\lambda^{2} - 8 \lambda + 16}{\lambda^{4} - 24 \lambda^{3} + 16 \lambda^{2}}\frac{df}{d\lambda}+\frac{4 \lambda - 16}{\lambda^{5} - 24 \lambda^{4} + 16 \lambda^{3}}f=0$\\
        \hline
        $9$&$\frac{d^3f}{d\lambda^3}+\frac{3 \lambda - 27}{\lambda^{2} - 18 \lambda - 27}\frac{d^2f}{d\lambda^2}+\frac{\lambda^{2} - 6 \lambda - 27}{\lambda^{4} - 18 \lambda^{3} - 27 \lambda^{2}}\frac{df}{d\lambda}+\frac{3 \lambda + 27}{\lambda^{5} - 18 \lambda^{4} - 27 \lambda^{3}}f=0$\\
        \hline
        $11$&$\frac{d^3f}{d\lambda^3}+\frac{3 \lambda^{3} - 30 \lambda^{2} + 66}{\lambda^{4} - 20 \lambda^{3} + 56 \lambda^{2} - 44 \lambda}\frac{d^2f}{d\lambda^2}+\frac{\lambda^{3} - 8 \lambda^{2} + 64 \lambda - 132}{\lambda^{5} - 20 \lambda^{4} + 56 \lambda^{3} - 44 \lambda^{2}}\frac{df}{d\lambda}+ \frac{4 \lambda^{2} - 64 \lambda + 132}{\lambda^{6} - 20 \lambda^{5} + 56 \lambda^{4} - 44 \lambda^{3}}f=0$\\
        \hline
    \end{tabular}}
    \caption{Picard-Fuchs Equations for $\mathbf{V}_N^+$ over $X_0(N)^+$.}
    \label{tab:PFEquations_VN}
\end{table}

\begin{table}
    \centering
     \resizebox{\textwidth}{!}{
    \begin{tabular}{||c|l||}
    \hline
        $N$ & Monodromy  \\
        \hline\hline
        $2$ &$\left(\begin{array}{rrr}
1 & 0 & 4 \\
0 & 0 & 1 \\
-1 & 1 & -2
\end{array}\right), \left(\begin{array}{rrr}
1 & 0 & 0 \\
0 & 0 & 1 \\
0 & 1 & 0
\end{array}\right), \left(\begin{array}{rrr}
1 & -4 & 0 \\
0 & 1 & 0 \\
1 & -2 & 1
\end{array}\right)$\\
\hline
$3$&$\left(\begin{array}{rrr}
1 & 0 & 6 \\
0 & 0 & 1 \\
-1 & 1 & -3
\end{array}\right), \left(\begin{array}{rrr}
1 & 0 & 0 \\
0 & 0 & 1 \\
0 & 1 & 0
\end{array}\right), \left(\begin{array}{rrr}
1 & -6 & 0 \\
0 & 1 & 0 \\
1 & -3 & 1
\end{array}\right)$\\
\hline
$4$&$\left(\begin{array}{rrr}
1 & 0 & 8 \\
0 & 0 & 1 \\
-1 & 1 & -4
\end{array}\right), \left(\begin{array}{rrr}
1 & 0 & 0 \\
0 & 0 & 1 \\
0 & 1 & 0
\end{array}\right), \left(\begin{array}{rrr}
1 & -8 & 0 \\
0 & 1 & 0 \\
1 & -4 & 1
\end{array}\right)$\\
\hline
$5$&$\left(\begin{array}{rrr}
11 & -20 & 30 \\
3 & -5 & 9 \\
-2 & 4 & -5
\end{array}\right), \left(\begin{array}{rrr}
-9 & 20 & -20 \\
-2 & 4 & -5 \\
2 & -5 & 4
\end{array}\right), \left(\begin{array}{rrr}
1 & 0 & 0 \\
0 & 0 & 1 \\
0 & 1 & 0
\end{array}\right), \left(\begin{array}{rrr}
1 & 0 & 10 \\
-1 & 1 & -5 \\
0 & 0 & 1
\end{array}\right)$\\
\hline
$6$&$\left(\begin{array}{rrr}
1 & 0 & 12 \\
-1 & 1 & -6 \\
0 & 0 & 1
\end{array}\right), \left(\begin{array}{rrr}
1 & 0 & 0 \\
0 & 0 & 1 \\
0 & 1 & 0
\end{array}\right), \left(\begin{array}{rrr}
49 & 120 & -120 \\
-10 & -24 & 25 \\
10 & 25 & -24
\end{array}\right), \left(\begin{array}{rrr}
-71 & -120 & 252 \\
15 & 25 & -54 \\
-14 & -24 & 49
\end{array}\right)$\\
\hline
$7$&$\left(\begin{array}{rrr}
15 & 28 & -56 \\
-4 & -7 & 16 \\
2 & 4 & -7
\end{array}\right), \left(\begin{array}{rrr}
-13 & -42 & 28 \\
3 & 9 & -7 \\
-2 & -7 & 4
\end{array}\right), \left(\begin{array}{rrr}
1 & 0 & 0 \\
0 & 0 & 1 \\
0 & 1 & 0
\end{array}\right), \left(\begin{array}{rrr}
1 & 0 & -14 \\
1 & 1 & -7 \\
0 & 0 & 1
\end{array}\right)$\\
\hline
$8$&$\left(\begin{array}{rrr}
-31 & 80 & -96 \\
-10 & 25 & -32 \\
3 & -8 & 9
\end{array}\right), \left(\begin{array}{rrr}
17 & -48 & 48 \\
3 & -8 & 9 \\
-3 & 9 & -8
\end{array}\right), \left(\begin{array}{rrr}
1 & 0 & 0 \\
0 & 0 & 1 \\
0 & 1 & 0
\end{array}\right), \left(\begin{array}{rrr}
1 & 0 & 16 \\
-1 & 1 & -8 \\
0 & 0 & 1
\end{array}\right)$\\
\hline
$9$&$\left(\begin{array}{rrr}
19 & -36 & 90 \\
5 & -9 & 25 \\
-2 & 4 & -9
\end{array}\right), \left(\begin{array}{rrr}
-17 & 72 & -36 \\
-4 & 16 & -9 \\
2 & -9 & 4
\end{array}\right), \left(\begin{array}{rrr}
1 & 0 & 0 \\
0 & 0 & 1 \\
0 & 1 & 0
\end{array}\right), \left(\begin{array}{rrr}
1 & 0 & 18 \\
-1 & 1 & -9 \\
0 & 0 & 1
\end{array}\right)$\\
\hline
$11$&$\left(\begin{array}{rrr}
23 & -44 & 132 \\
6 & -11 & 36 \\
-2 & 4 & -11
\end{array}\right), \left(\begin{array}{rrr}
23 & -66 & 88 \\
4 & -11 & 16 \\
-3 & 9 & -11
\end{array}\right), \left(\begin{array}{rrr}
1 & 0 & 0 \\
0 & 0 & 1 \\
0 & 1 & 0
\end{array}\right), \left(\begin{array}{rrr}
23 & 66 & -88 \\
-4 & -11 & 16 \\
3 & 9 & -11
\end{array}\right), \left(\begin{array}{rrr}
1 & 0 & 22 \\
-1 & 1 & -11 \\
0 & 0 & 1
\end{array}\right)$\\
\hline
    \end{tabular}}
    \caption{Monodromy Representation for $\mathbf{V}_N^+$}
    \label{tab:VN_Monodromy}
\end{table}

\begin{proof}
The computation of the Picard-Fuchs equations for these families is done using the Griffiths-Dwork algorithm, and a complete description of how this calculation is performed is found in \cite{clingher_normal_2007}. 
Normalizing the characteristic exponents as described in the theorem statement leads to the particularly nice forms of the Picard-Fuchs equations found in Table \ref{tab:PFEquations_VN} and is our main motivation in making such a choice. 

To calculate the local system data, we compute the parabolic cohomology local system coming from the internal elliptic fibration structures described in Table \ref{tab:X_N-InternalFibratiton}.
One starts by choosing a base point for the variation of local system and computes the corresponding initial homological invariant. 
This pins down the fibre of the parabolic cohomology local system, as well as the intersection matrix for a basis of parabolic cocycles. 
Using Mathematica, we draw plots depicting the motion of the singular fibres in the internal fibration as the deformation parameter $\lambda\in X_0(N)^+$ varies; in turn, we write down the braiding map and, therefore, determine the (projective) monodromy representation of the parabolic cohomology local system. 

For each $N$, $\mathcal{X}_N$ is an $M_N$-polarized family of K3 surfaces and the essential lattices corresponding to the internal fibration structure are contained in $M_N$. 
Since the Hodge structures $\mathbf{V}_N^+$ are irreducible, Proposition \ref{prop:latticepolarizedstructure} implies that
 $$\mathbf{V}_N^+=\mathcal{W}_\mathbf{Z}\cap \bigcap_{p\in\{0,\lambda_1,\dots, \lambda_q,\infty\}}\ker_\mathbf{Q}(N_p-1),$$
where $N_p$ denotes the monodromy of $\mathcal{W}$ at the point $p$. 

Given the projective monodromy transformations of $\mathcal{W}$, we use the Picard-Fuchs equation to determine the honest monodromy transformations, i.e., eliminate the sign ambiguity. 
For each $N\in\{2,3,4,5,6,7,8,9,11\}$, the monodromy matrices around $\lambda=\lambda_1,\dots, \lambda_q$ are conjugate to 
$$\pm[-1]\oplus[1]\oplus\cdots\oplus[1].$$
Since the characteristic exponents of the Picard-Fuchs equations at these points are $0,\frac{1}{2},1$, it follows that the trace of these monodromy transformations (on parabolic cohomology) is equal to $1+r$, where $r$ denotes the Mordell-Weil rank. 
Similarly, the trace of monodromy at $\lambda=\infty$ is equal to $3+r$ and since, in all cases, the monodromy at $\infty$ is conjugate to 
$$\pm \left(\begin{array}{rrr}1&1&0\\
0&1&1\\
0&0&1\end{array}\right)\oplus[1]\oplus\cdots\oplus[1],$$
we can similarly get rid of the sign ambiguity. 
Finally, since the signs of the monodromy transformations at $\lambda=\lambda_1,\dots, \lambda_q,\infty$ are determined, the sign of the monodromy transformation at $\lambda=0$ is also determined.

Having computed the global monodromy representation for $\mathcal{W}$, we obtain the global monodromy representation for $\mathbf{V}_N^+$ by restricting the monodromy transformations to $\mathbf{V}_N^+$. 
Finally, by playing around with the intersection form, one changes bases appropriately to obtain one for which the intersection pairing is as in the theorem statement.
 
\end{proof}

\begin{example}
As an illustration, we supply some details in the case $N=5$. 
The modular curve $X_0(5)^+$ has $2$-orbifold points at $\lambda=0,22\pm 10\sqrt{5}$, and a cusp at $\lambda=\infty$; the family $\mathcal{X}_5$ is cut out by 
$$(w^2+xw+yw+zw+xy+xz+yz)^2-\lambda xyzw=0.$$
The family $\mathcal{X}_5$ contains the line $x=y+w=0$ and, as explained in \cite{ilten_toric_2013}, we can use this to construct an internal elliptic fibration. 
In terms of Weierstrass invariants, one obtains:
\begin{align*}
    g_2&=\resizebox{.7\textwidth}{!}{$12\cdot t^{2} \cdot \lambda^{2} \cdot (\lambda^{2} t^{2} - 16 \lambda t^{3} + 16 t^{4} - 24 \lambda t^{2} + 48 t^{3} - 8 \lambda t + 64 t^{2} + 48 t + 16)$}\\
         g_3&=\resizebox{.8\textwidth}{!}{$8\cdot t^{3} \cdot \lambda^{3} \cdot (\lambda t - 8 t^{2} - 12 t - 4) \cdot (-\lambda^{2} t^{2} + 16 \lambda t^{3} + 8 t^{4} + 24 \lambda t^{2} + 24 t^{3} + 8 \lambda t + 8 t^{2} - 24 t - 16)$}
\end{align*}
The discriminant of this fibration is 
\begin{equation*}
    \Delta_5=-4096\cdot\lambda^6(t + 1)^6t^8(16\lambda t^3 + (-\lambda^2 + 24\lambda - 16)t^2 + (8\lambda - 32)t - 16).
\end{equation*}
The singular fibre at $t=0$ is of type $\I_2^*$; the fibre at $t=-1$ is of type $\I_6$; the fibre at $t=\infty$ is of type $\I_1^*$; and the fibres located at the roots $p_1,p_2,p_3$ of the cubic in the above discriminant are of type $\I_1$. 
On $X_0(5)^+-\{0,22\pm 10\sqrt{5},\infty\}$, there are no fibre collisions and we obtain a geometric variation of local system on the $6$-configuration $\left(\mathbf{P}^1_t\times X_0(5)^+-\{0,22\pm 10\sqrt{5},\infty\},\Delta_5\right)$ corresponding the variation of Hodge structure describing the elliptic curve family. 
We use $\lambda=-1$ as a basepoint for $X_0(5)^+-\{0,22\pm 10\sqrt{5},\infty\}$ and choose some point in the second quadrant as a basepoint for $\mathbf{P}^1_t$.
With respect to these basepoints and a basis of loops $\gamma_1,\dots, \gamma_5$ as depicted in Figure \ref{fig:N=5Example}, the initial homological invariant is given by
\begin{equation*}
    \resizebox{.95\textwidth}{!}{$M_{p_1}=\left(\begin{array}{rr}
2 & 1 \\
-1 & 0
\end{array}\right),\ M_{p_2}=
\left(\begin{array}{rr}
0 & 1 \\
-1 & 2
\end{array}\right),\ M_{p_3}=
\left(\begin{array}{rr}
-2 & 9 \\
-1 & 4
\end{array}\right),\ M_{-1}=
\left(\begin{array}{rr}
1 & 6 \\
0 & 1
\end{array}\right),\ M_{0}=
\left(\begin{array}{rr}
-5 & -8 \\
2 & 3
\end{array}\right),\ M_{\infty}=
\left(\begin{array}{rr}
-2 & -1 \\
1 & 0
\end{array}\right)$}.
\end{equation*}
Next, let $\sigma_{-},\sigma_0,\sigma_{+}$ denote loops in the upper half plane that encircle $22-10\sqrt{5},0,22+10\sqrt{5}$ respectively. 
As $\lambda$ varies, the positions of $p_1,p_2,p_3$ vary according to the motions depicted in Figure \ref{fig:N=5Example}. 
The braiding map $\ph$ is determined by these figures and we have
\begin{eqnarray*}
\ph(\sigma_-)&=&\beta_1\beta_2^{-1}\beta_1^{-1}\\
\ph(\sigma_0)&=&\beta_2^{-1}\beta_1^{-1}\cdot (\beta_2\beta_3)^3\beta_2\beta_1^{-1}\beta_2^{-1}\beta_3^{-1}\beta_{4}^{-2}\beta_3^{-1}\beta_2^{-1}\beta_1^{-1}\cdot \beta_1\beta_2\\
\ph(\sigma_+)&=& \left(\beta_3\beta_4\beta_1^{-1}\beta_2^{-1}\beta_3\beta_4\beta_{1}^{-1}\beta_2^{-1}\beta_3^{-1}\beta_4^{-1}\right)\cdot\beta_4\cdot\left(\beta_3\beta_4\beta_1^{-1}\beta_2^{-1}\beta_3\beta_4\beta_{1}^{-1}\beta_2^{-1}\beta_3^{-1}\beta_4^{-1}\right)^{-1}.
\end{eqnarray*}

The parabolic cohomology of this geometric variation of local systems has rank four and these braids allow us to run the Dettweiler-Wewers algorithm to compute the projective monodromy represenation. 
We obtain the following monodromy representation and intersection pairing:
\begin{equation*}
    \resizebox{\textwidth}{!}{$N_-=\left(\begin{array}{rrrr}
1 & 0 & 0 & 0 \\
0 & 1 & 0 & 0 \\
0 & 0 & 1 & 0 \\
0 & -1 & 1 & -1
\end{array}\right),\ N_0=
\left(\begin{array}{rrrr}
-3 & 0 & 0 & 4 \\
0 & -1 & 0 & 0 \\
-3 & -1 & 1 & 3 \\
-2 & 0 & 0 & 3
\end{array}\right),\ N_+=
\left(\begin{array}{rrrr}
-3 & -4 & -8 & 20 \\
4 & 5 & 8 & -20 \\
-1 & -1 & -1 & 5 \\
0 & 0 & 0 & 1
\end{array}\right),\ N_\infty=
\left(\begin{array}{rrrr}
-7 & -12 & 16 & -24 \\
4 & 7 & -12 & 20 \\
-4 & -6 & 7 & -8 \\
-2 & -3 & 3 & -3
\end{array}\right),\ Q=\left(\begin{array}{rrrr}
12 & -4 & 4 & 4 \\
-4 & 2 & -2 & -2 \\
4 & -2 & -4 & -1 \\
4 & -2 & -1 & 0
\end{array}\right)$.}
\end{equation*}
The intersection of the $1$-eigenspaces of these matrices is $1$-dimensional and necessarily corresponds to the algebraic piece of parabolic cohomology since we are working with an $M_5$-polarized family. 
The intersection of the $\mathbf{Q}$-orthogonal complement of this $1$-dimensional space with parabolic cohomology is the local system of transcendental lattices. 
After changing bases appropriately, we obtain the data found in Table \ref{tab:VN_Monodromy}.

Finally, as a check on the sign of the monodromies, we use the Picard-Fuchs equation
\begin{equation*}\frac{d^3}{d\lambda^3}+\frac{3 \lambda - 66}{\lambda^{2} - 44 \lambda - 16}\frac{d^2f}{d\lambda^2}+\frac{\lambda^{2} - 12 \lambda - 12}{\lambda^{4} - 44 \lambda^{3} - 16 \lambda^{2}}\frac{df}{d\lambda}+\frac{6 \lambda + 12}{\lambda^{5} - 44 \lambda^{4} - 16 \lambda^{3}}f=0.
\end{equation*}
By calculating the characteristic exponents of this equation, we see that the monodromy at $22\pm 10\sqrt{5}$ must have trace equal to $1$ and the monodromy at $\infty$ has trace $3$. 
Since the monodromy matrices calculated above have this property, we have the correct signs. 
\end{example}

\begin{figure}%
    \centering
    \subfloat[Basis of loops.]{{\includegraphics[width=6cm]{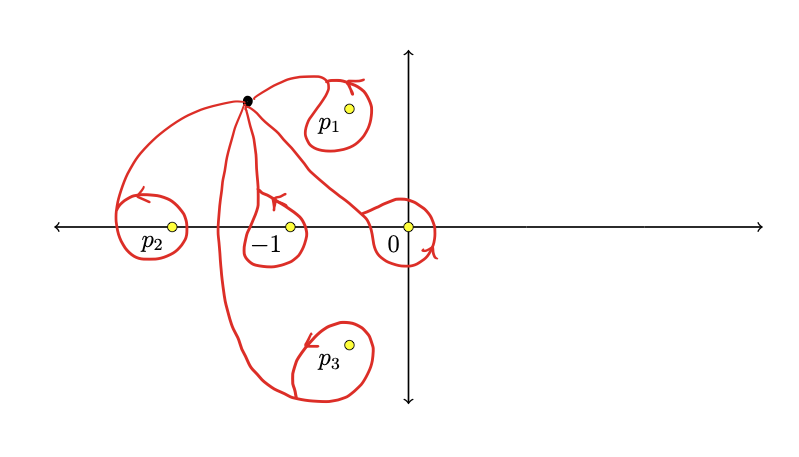} }}%
    \qquad
    \subfloat[The braid $\sigma_-$.]{{\includegraphics[width=6cm]{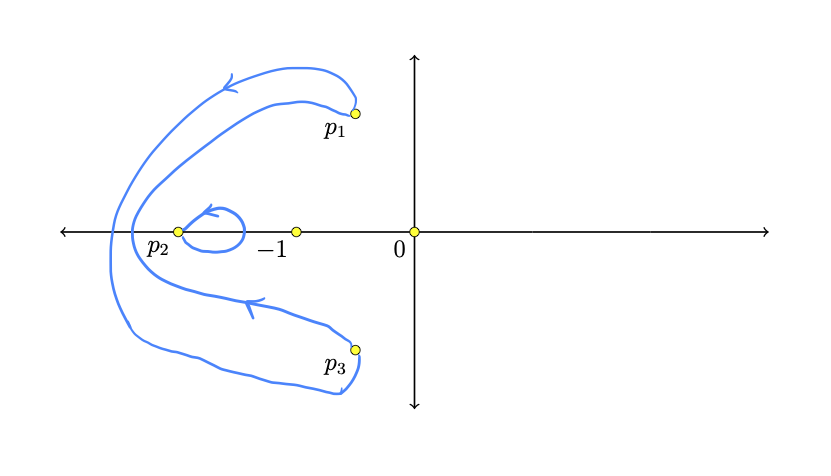} }}%
    \qquad
    \subfloat[The braid $\sigma_0$.]{{\includegraphics[width=6cm]{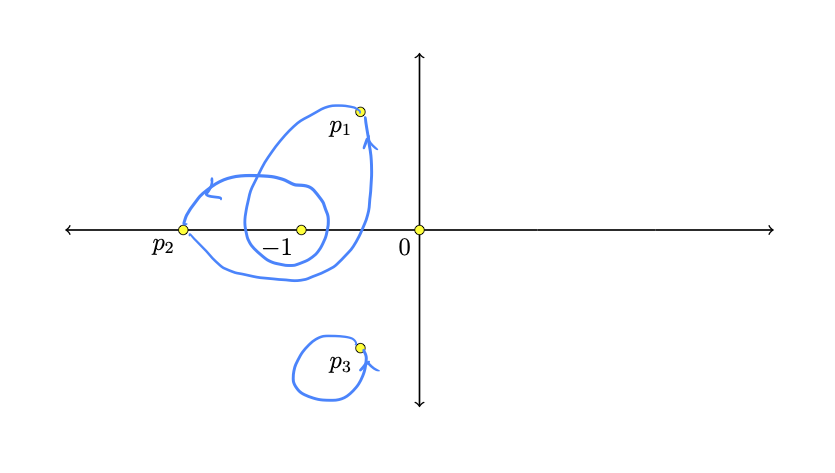} }}%
    \qquad
    \subfloat[The braid $\sigma_+$.]{{\includegraphics[width=6cm]{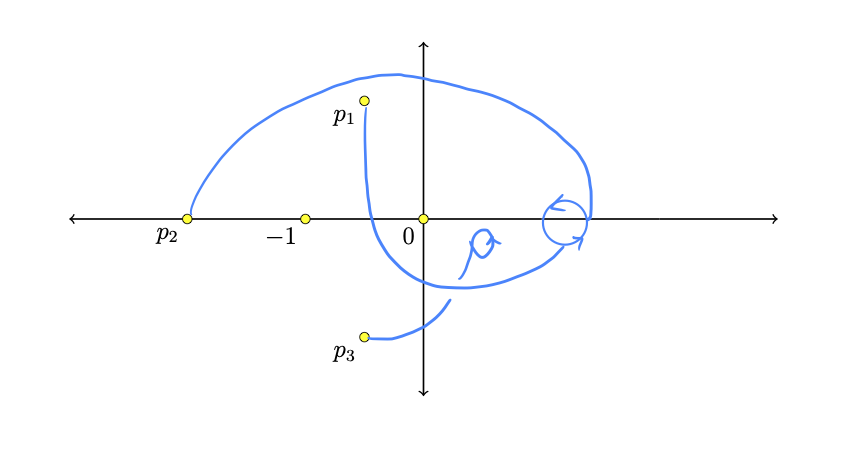} }}%
    \caption{The basis of loops we use on $X_0(5)^+$ is depicted on the top left corner. The motion of the singular fibres as $\lambda$ varies along the loops $\sigma_-,\sigma_0,\sigma_+$ is depicted in the other three figures. }%
    \label{fig:N=5Example}%
\end{figure}

\section{Geometric Isomonodromic Deformations Revisited}\label{sec:conclusion}
One of the goals of the first author's work in \cite{doran_algebraic_2001} was to construct solutions to the non-linear second order differential equation known as Painlev\'{e} VI. 
More generally, a notion of \emph{geometric isomonodromic deformation} was introduced to capture the solutions of isomonodromic deformation equations that arose geometrically. 
In this section, we review some of the deformations and explain the connection between geometric isomonodromic deformations and geometric variations of local systems. 
For a more precise treatment of isomonodromy, the reader is referred to the original article \cite{doran_algebraic_2001} and the discussion therein of Malgrange's formalism in \cite{malgrange_sur_1982}.

Let $A$ be a connected complex manifold, which we will refer to as the \emph{deformation space}, and let
$$t_i\colon A\to\mathbf{P}^1,\ 1\leq i\leq m$$
be deformation functions, which we take to be holomorphic. 
Assume that $t_i(a)\neq t_j(a)$ for $i\neq j$ throughout $A$. 
Let $X=\mathbf{P}^1\times A$ and let $D_a$ denote the relative divisor
$$D=\bigcup_{i=1}^{m}\{(x,t_i(a))\}.$$
\begin{definition}
An \emph{isomonodromic deformation} $(E,\nabla)$ of the pair $(E^0,\nabla^0)$ with deformation space $A$, deformation functions $\{t_i(a)\}$, and base point $a_0\in A$ is given by 
\begin{itemize}
    \item a holomorphic vector bundle $E$ over $\mathbf{P}^1\times T$ of rank $n$;
    \item a flat connection $\nabla$ of $E|_{X-D}$, meromorphic over $D$, for which the restriction of $(E,\nabla)$ to $\mathbf{P}^1\times\{a_0\}$ is isomorphic to $(E^0,\nabla^0)$.
\end{itemize}
\end{definition}
Loosely speaking, an isomonodromic deformation is a family of differential equations for which the corresponding monodromy representations do not vary with the deformation parameter.
In particular, an isomonodromic deformation is a variation of local systems; conversely, any variation of complex local systems parameterized by a deformation space of the type described above gives rise to an isomonodromic deformation.
A \emph{geometric isomonodromic deformation}, as defined in \cite{doran_algebraic_2001}, is an isomonodromic deformation for which the flat connection is the Gauss-Manin connection associated to a flat family of algebraic varities. 
Thus, a geometric isomonodromic deformation gives rise a geometric variation of local systems. 

\begin{remark}
As they were defined in \cite{doran_algebraic_2001}, geometric isomonodromic deformations were necessarily algebraic. 
This means that the space $A$, the deformation functions $t_i$, and flat connection $A$ were all algebraic. 
The notion of geometric variations of local systems is more flexible and allows for analytic families. 
In particular, if $A$  parameterizes analytic families of K\"{a}hler manifolds that are not algebraic, then we still obtain an isomonodromic deformation that is ostensibly geometric, even though it will fail to be algebraic. 
\end{remark}

The main benefit of recasting isomonodromic deformations in terms of variations of local systems is the fact that the variation gives rise to a local system on the deformation space that is intrinsic to the varying family of connections.
For example, consider the situation of Painlev\'{e} VI. 
A solution to Painlev\'{e} VI is determined by an analytic covering $t\colon A\mathbf{P}^1-\{0,1,\infty\}$, together with a holomorphic map $\lambda\colon S\mathbf{P}^1$. 
The curve $A$ is called the \emph{Painlev\'{e} curve} and the solution is called algebraic if $t$ is an algebraic morphism. 
The functions $t$ and $\lambda$ determine the isomonodromic deformation of second order Fuchsian equation. 
Viewed now as a variation of local systems, the Painlev\'{e} curve is equipped with the corresponding parabolic cohomology local system. 
If we further specialize to geometric isomonodromic deformations, then we obtain the stronger result that the Painlev\'{e} curve carries a variation of Hodge structure.

As an illustration, we now analyze the five geometric isomonodromic deformations that were used in \cite{doran_algebraic_2001} to produce algebraic solutions to Painlev\'{e} VI as geometric variations of local systems. 
These solutions arose by taking the five deforming families on Herfurtner's list that do not possess $\I_0^*$ fibres.
The Weierstrass and homological invariants for these five families are tabulated in Tables \ref{tab:FiveFamWeierstrass} and \ref{tab:five_fam_hom}. 
For each of these families, we run the Dettweiler-Wewers algorithm to calculate the variation of Hodge structure on the deformation space corresponding to the geometric variation of local systems. 
For the purposes of constructing Painlev\'{e} VI solutions, the isomonodromic deformation was treated as equivalent to its projective normal form. 
In particular, the isomonodromic deformation obtained by performing a quadratic twist at two of the bad fibres was considered to produce the same solution to Painlev\'{e} VI. 
Since we wish to remain sensitive to the change in integral structure, we also compute the parabolic cohomology local systems associated to all of the allowable quadratic twists.

\begin{table}
\centering
\begin{tabular}{||c|c|l||}
\hline
{ Family}
&
{ Sing. Fibre Types}
& 
{ Weierstrass Presentation}
\\ [0.5ex]
\hline
\hline
$\mathbf{1}$&$\textrm{I}_1,\textrm{I}_1, \textrm{II},\textrm{IV}^*$&$g_2(t,a)=3(t-1)(t-a^2)^3$\\
&$(0,\infty,1,a^2)$&$g_3(t,a)=(t-1)(t-a^2)^4(t+a)$\\
\hline
\hline
$\mathbf{2}$&$\textrm{I}_1,\textrm{I}_1,\textrm{I}_2,\textrm{I}_2^*$&$g_2(t,a)=12t^2(t^2+at+1)$\\
&$(\omega_1,\omega_2,\infty,0)$&$g_3(t,a)=4t^3(2t^3+3at^2+3at+2)$\\
\hline
\hline
$\mathbf{3}$&$\textrm{I}_1,\textrm{I}_1,\textrm{I}_1,\textrm{I}_3^*$&$g_2(t,a)=12t^2(t^2+2at+1)$\\
&$(\omega_1,\omega_2,\infty,0)$&$g_3(t,a)=4t^3(2t^3+3(a^2+1)t^2+6at+2)$\\
\hline
\hline
$\mathbf{4}$&$\textrm{I}_1,\textrm{I}_1,\textrm{I}_1,\textrm{III}^*$&$g_2(t,a)=3t^3(t+a)$\\
&$(\omega_1,\omega_1,\infty,0)$&$g_3(t,a)=t^5(t+1)$\\
\hline
\hline
$\mathbf{5}$&$\textrm{I}_1,\textrm{I}_1,\textrm{I}_2,\textrm{IV}^*$&$g_2(t,a)=3t^3(t+2a)$\\
&$(\omega_1,\omega_2,\infty,0)$&$g_3(t,a)=t^4(t^2+3at+1)$\\
\hline
\end{tabular}
\caption{The five families considered by Doran.}\label{tab:FiveFamWeierstrass}

\end{table}

\begin{table}
\centering
\begin{tabular}{||c|c|c|c|c|c||}
\hline
{Name}
&
{$a_0$}
& 
{Homological Invariant}&{$Q$}
&{disc.}
&{$n$}
\\ [0.5ex]
\hline
\hline
$\mathbf{1}$&$-2$&$\left(\begin{array}{rr}
1 & 1 \\
0 & 1
\end{array}\right), \left(\begin{array}{rr}
1 & 1 \\
-1 & 0
\end{array}\right), \left(\begin{array}{rr}
0 & -1 \\
1 & -1
\end{array}\right)$&$\begin{bmatrix}-6 & -3 \\
-3 & -2\end{bmatrix}$&$3$&$1$\\
\hline
\hline
$\mathbf{2}$&$0$&$\left(\begin{array}{rr}
0 & 1 \\
-1 & 2
\end{array}\right), \left(\begin{array}{rr}
-1 & -2 \\
0 & -1
\end{array}\right), \left(\begin{array}{rr}
2 & 1 \\
-1 & 0
\end{array}\right)$&$[-2]$&$-2$&$2$\\
\hline
\hline
$\mathbf{3}$&$-3$&$\left(\begin{array}{rr}
1 & 1 \\
0 & 1
\end{array}\right), \left(\begin{array}{rr}
-1 & -3 \\
0 & -1
\end{array}\right), \left(\begin{array}{rr}
3 & 4 \\
-1 & -1
\end{array}\right)$&$[-4]$&$-4$&1\\
\hline
\hline
$\mathbf{4}$&$-1$&$\left(\begin{array}{rr}
1 & 1 \\
0 & 1
\end{array}\right), \left(\begin{array}{rr}
0 & -1 \\
1 & 0
\end{array}\right), \left(\begin{array}{rr}
1 & 1 \\
0 & 1
\end{array}\right), \left(\begin{array}{rr}
1 & 0 \\
-1 & 1
\end{array}\right)$&$[-2]$&$-2$&1\\
\hline
\hline
$\mathbf{5}$&$-1$&$\left(\begin{array}{rr}
1 & 1 \\
0 & 1
\end{array}\right), \left(\begin{array}{rr}
-1 & -1 \\
1 & 0
\end{array}\right), \left(\begin{array}{rr}
0 & 1 \\
-1 & 2
\end{array}\right)$&$[-6]$&$-6$&$1$\\
\hline
\end{tabular}
\caption{The homological invariants corresponding to the five families of Doran.}\label{tab:five_fam_hom}

\end{table}

\begin{proposition}
The parabolic cohomology local systems associated to the five deforming families of elliptic surfaces with four singular fibres are tabulated in Table \ref{tab:five_fam_pc_data}.
These local systems are all irreducible and correspond to the essential lattice or transcendental lattice local system depending on if the surfaces are rational or K3 respectively. 
The local systems with matching colour in Table \ref{tab:five_fam_pc_data} that are associated with the same family are isomorphic.
\end{proposition}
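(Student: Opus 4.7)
The plan is to carry out the Dettweiler-Wewers algorithm of Section \ref{sec:computing} for each of the five deforming families in Table \ref{tab:FiveFamWeierstrass}, starting from the homological invariants recorded in Table \ref{tab:five_fam_hom}. For each family, the base $A$ of the deformation is a punctured sphere; I would fix the basepoint $a_0$ as in Table \ref{tab:five_fam_hom} and choose a standard system of generating loops in the upper half-plane. The first step is to determine the braiding map $\ph\colon\pi_1(A,a_0)\to A_{r-1}$ by tracking the motion of the singular fibres $t_i(a)$ as $a$ traverses each generating loop, exactly as was illustrated in the $N=5$ computation of Proposition \ref{prop:VNVHS_Calc}. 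With the braiding map in hand, steps (1)--(4) of the algorithm from Section \ref{sec:computing} produce the explicit integer monodromy matrices on $W_{\mathbf{g}}$ and the associated intersection pairing via the Dettweiler-Wewers cup-product formula.

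Once the raw data is computed, I would check irreducibility. By the rank formula \eqref{rank_formula} and the local monodromy types tabulated in Kodaira's list (Table \ref{Table:KodairaClass}), the rank of the parabolic cohomology is very low in each case (small enough that the standard invariant-subspace argument, applied to the integer generators $\eta(\gamma_i)$, reduces to a finite mechanical check). To identify the resulting local system with either the essential lattice or the transcendental lattice, I would invoke Proposition \ref{Prop:EllipticSurfacePCohomStructure}: families $\mathbf{1}$ and $\mathbf{5}$ give rational elliptic surfaces (the Euler characteristics of the fibres sum to $12$ by the table), so the parabolic cohomology coincides with the essential lattice $L(\mathcal{E})$; families $\mathbf{2}$, $\mathbf{3}$, $\mathbf{4}$ have Mordell-Weil rank zero after taking into account the starred singular fibre, so by the remark after Proposition \ref{Prop:EllipticSurfacePCohomStructure} the parabolic cohomology coincides with the transcendental lattice $\mathbf{T}(\mathcal{E})$.

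For the final assertion about matching colours, the point is that the allowable quadratic twists (at pairs of fibres in $\Sigma^-\cup\Sigma^+$ that preserve the property of not having an $\I_0^*$) modify the homological invariant by a character of order two, hence produce projectively equivalent local systems by the remark following the monodromy theorem in Section \ref{sec:computing}. To upgrade projective equivalence to an honest isomorphism over $\mathbf{Z}$, I would exhibit an explicit base change conjugating one set of monodromy matrices into the other while matching the intersection pairings; the $\chi$-ambiguity is resolved by comparing signs of individual local monodromies with the constraint that each must be in the conjugacy class dictated by the Kodaira type of the (possibly twisted) fibre.

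The main obstacle is the braiding map: in several of these families the singular fibres coalesce in complicated ways at the boundary strata of $A$, and tracking the induced braid on the $r$-configuration requires care. Once the braid words are correct, the algorithm is mechanical and the lattice identifications follow from Proposition \ref{Prop:EllipticSurfacePCohomStructure} and the standard classification of rank-one and rank-two integral lattices by discriminant, which is why the tables record only $Q$, $\det(Q)$, and rank.
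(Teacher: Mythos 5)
Your overall strategy is the paper's: the proposition is established by explicit computation, running the Dettweiler--Wewers algorithm of Section \ref{sec:computing} with the braiding map read off from the motion of the singular fibres over loops in the deformation space (this is exactly what the paper's Family 1 example carries out), and identifying the resulting lattice via Proposition \ref{Prop:EllipticSurfacePCohomStructure}. However, two of your concrete claims are wrong and would lead you to incorrect conclusions.

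First, you have misassigned the rational/K3 dichotomy. All five base families in Table \ref{tab:FiveFamWeierstrass} are \emph{rational} elliptic surfaces: in every case the Euler numbers of the four Kodaira fibres sum to $12$ (e.g.\ Family 3 has $\I_1,\I_1,\I_1,\I_3^*$ with $1+1+1+9=12$). The ``Type'' column in Table \ref{tab:five_fam_pc_data} distinguishes \emph{rows}, i.e.\ the various quadratic twists of a given family, not the five families from one another: the K3 surfaces arise only after twisting at a pair of singular fibres, and twists at two additive fibres interchanged by the $*$-involution stay rational. In particular the untwisted families $\mathbf{2},\mathbf{3},\mathbf{4}$ are rational, so their parabolic cohomology is the essential lattice, not the transcendental lattice --- indeed a rational elliptic surface has trivial transcendental lattice, while the table records nonzero $Q$ for these rows.

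Second, the colour-matching argument via projective equivalence does not work as stated. Performing a quadratic twist of the \emph{internal} fibration at a pair of singular fibres changes the local system $\mathcal{V}$ itself (the local monodromies, and hence by formula \eqref{rank_formula} the rank of parabolic cohomology, change: compare the rank $2$, rank $3$, and rank $4$ entries for Family 1). So the various twists are not all projectively equivalent, and the remark in Section \ref{sec:computing} about quadratic twists refers to something else entirely, namely the sign ambiguity in the character $\chi\colon\pi_1(A)\to\GL(V)$ for a \emph{fixed} variation of local systems. The isomorphisms asserted by the matching colours are between specific pairs of twists that happen to produce isometric lattices with conjugate monodromy tuples, and they must be exhibited by explicit integral base change from the computed data; there is no a priori twist-theoretic reason for them.
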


\begin{table}[]
    \centering
    \resizebox{\textwidth}{!}{
    \begin{tabular}{||c|c|c|l||}
    \hline
    Twist&Type&$Q$&Monodromy\\
    \hline\hline
         $\mathbf{1}$&Rational&$\left(\begin{array}{rr}
-6 & -3 \\
-3 & -2
\end{array}\right)$&$ \left(\begin{array}{rr}
1 & 0 \\
0 & 1
\end{array}\right), \left(\begin{array}{rr}
-1 & 3 \\
0 & 1
\end{array}\right), \left(\begin{array}{rr}
2 & -3 \\
1 & -1
\end{array}\right), \left(\begin{array}{rr}
1 & 0 \\
1 & -1
\end{array}\right)$  \\
$0,1$&K3&$\textcolor{red}{\left(\begin{array}{rrr}
-6 & 3 & 3 \\
3 & 6 & 3 \\
3 & 3 & 2
\end{array}\right)}$&$\left(\begin{array}{rrr}
2 & 3 & -3 \\
-3 & -4 & 3 \\
-2 & -2 & 1
\end{array}\right), \left(\begin{array}{rrr}
5 & 3 & 0 \\
-8 & -5 & 0 \\
-6 & -3 & -1
\end{array}\right), \left(\begin{array}{rrr}
4 & -3 & 6 \\
-6 & 4 & -9 \\
-5 & 4 & -8
\end{array}\right), \left(\begin{array}{rrr}
1 & 0 & 0 \\
-1 & -1 & 0 \\
-1 & 0 & -1
\end{array}\right)$\\
$0,a^2$&K3&$\textcolor{red}{\left(\begin{array}{rrr}
-2 & -1 & -1 \\
-1 & -2 & 0 \\
-1 & 0 & -2
\end{array}\right)}$&$\left(\begin{array}{rrr}
-1 & 1 & 1 \\
0 & 0 & 1 \\
0 & 1 & 0
\end{array}\right), \left(\begin{array}{rrr}
-1 & 1 & 0 \\
0 & 1 & 0 \\
0 & 0 & -1
\end{array}\right), \left(\begin{array}{rrr}
0 & 1 & 0 \\
1 & 0 & -1 \\
-1 & 1 & 0
\end{array}\right), \left(\begin{array}{rrr}
1 & 0 & 0 \\
1 & -1 & 0 \\
1 & 0 & -1
\end{array}\right)$\\
$0,\infty$&K3&$\textcolor{blue}{\left(\begin{array}{rrrr}
12 & 12 & 6 & 6 \\
12 & 14 & 7 & 5 \\
6 & 7 & 2 & 3 \\
6 & 5 & 3 & 2
\end{array}\right)}$&$\left(\begin{array}{rrrr}
1 & -6 & 6 & 6 \\
0 & -6 & 7 & 9 \\
0 & -5 & 6 & 7 \\
0 & -2 & 2 & 3
\end{array}\right), \left(\begin{array}{rrrr}
5 & 0 & -6 & -12 \\
6 & -3 & -3 & -10 \\
4 & -4 & 1 & -4 \\
0 & 2 & -3 & -3
\end{array}\right), \left(\begin{array}{rrrr}
5 & -6 & 6 & 0 \\
2 & -4 & 7 & 2 \\
-2 & 0 & 4 & 3 \\
4 & -3 & 0 & -2
\end{array}\right), \left(\begin{array}{rrrr}
1 & 0 & 0 & 0 \\
0 & 1 & 0 & 0 \\
0 & 1 & -1 & 0 \\
2 & -1 & 0 & -1
\end{array}\right)$\\
$1,a^2$&Rational&$\left(\begin{array}{rr}
-2 & -3 \\
-3 & -6
\end{array}\right)$&$\left(\begin{array}{rr}
1 & 0 \\
0 & 1
\end{array}\right), \left(\begin{array}{rr}
-1 & 1 \\
0 & 1
\end{array}\right), \left(\begin{array}{rr}
2 & -1 \\
3 & -1
\end{array}\right), \left(\begin{array}{rr}
1 & 0 \\
3 & -1
\end{array}\right)$\\
$1,\infty$&K3&$\textcolor{brown}{\left(\begin{array}{rrr}
12 & 12 & -6 \\
12 & 14 & -5 \\
-6 & -5 & 2
\end{array}\right)}$&$\left(\begin{array}{rrr}
-1 & 0 & 0 \\
3 & -3 & 2 \\
6 & -4 & 3
\end{array}\right), \left(\begin{array}{rrr}
11 & -6 & 6 \\
5 & -2 & 3 \\
-15 & 9 & -8
\end{array}\right), \left(\begin{array}{rrr}
-5 & 0 & -6 \\
-3 & -1 & -5 \\
6 & -1 & 6
\end{array}\right), \left(\begin{array}{rrr}
1 & 0 & 0 \\
0 & 1 & 0 \\
-2 & 1 & -1
\end{array}\right)$\\
$a^2,\infty$&K3&$\textcolor{brown}{\left(\begin{array}{rrr}
-4 & -4 & -2 \\
-4 & -6 & -3 \\
-2 & -3 & -2
\end{array}\right)}$&$\left(\begin{array}{rrr}
-1 & 0 & 0 \\
-1 & 1 & -2 \\
0 & 0 & -1
\end{array}\right), \left(\begin{array}{rrr}
1 & -2 & 2 \\
0 & -2 & 3 \\
0 & -1 & 2
\end{array}\right), \left(\begin{array}{rrr}
1 & 0 & -2 \\
2 & -1 & -1 \\
1 & -1 & 0
\end{array}\right), \left(\begin{array}{rrr}
1 & 0 & 0 \\
0 & 1 & 0 \\
0 & 1 & -1
\end{array}\right)$\\
All&K3&$\textcolor{blue}{\left(\begin{array}{rrrr}
-4 & -4 & 2 & 2 \\
-4 & -6 & 1 & 3 \\
2 & 1 & 6 & 4 \\
2 & 3 & 4 & 2
\end{array}\right)}$&$\left(\begin{array}{rrrr}
1 & 0 & 2 & -2 \\
-2 & 3 & 3 & -1 \\
2 & -2 & 2 & -3 \\
2 & -2 & 1 & -2
\end{array}\right), \left(\begin{array}{rrrr}
-5 & 4 & -2 & 4 \\
-1 & -1 & -3 & 2 \\
-2 & 4 & 3 & 0 \\
-6 & 8 & 2 & 3
\end{array}\right), \left(\begin{array}{rrrr}
-5 & 6 & -2 & 4 \\
-7 & 6 & -3 & 4 \\
8 & -7 & 4 & -5 \\
6 & -3 & 3 & -2
\end{array}\right), \left(\begin{array}{rrrr}
1 & 0 & 0 & 0 \\
0 & 1 & 0 & 0 \\
-2 & 1 & -1 & 0 \\
0 & -1 & 0 & -1
\end{array}\right)$\\
\hline\hline
$\mathbf{2}$&Rational&$\left(\begin{array}{r}
-2
\end{array}\right)$&$\left(\begin{array}{r}
-1
\end{array}\right), \left(\begin{array}{r}
1
\end{array}\right), \left(\begin{array}{r}
-1
\end{array}\right)$\\
$\omega_1,\omega_2$&K3&$\textcolor{red}{\left(\begin{array}{rrr}
-4 & -4 & 0 \\
-4 & -2 & 0 \\
0 & 0 & 4
\end{array}\right)}$&$ \left(\begin{array}{rrr}
0 & 2 & 1 \\
0 & 3 & 2 \\
1 & -2 & 0
\end{array}\right), \left(\begin{array}{rrr}
0 & 2 & -1 \\
0 & 1 & 0 \\
1 & -2 & 0
\end{array}\right), \left(\begin{array}{rrr}
1 & 0 & 0 \\
2 & -1 & 0 \\
0 & 0 & -1
\end{array}\right)$\\
$0,\infty$&Rational&$\left(\begin{array}{r}
-2
\end{array}\right)$&$\left(\begin{array}{r}
-1
\end{array}\right), \left(\begin{array}{r}
1
\end{array}\right), \left(\begin{array}{r}
-1
\end{array}\right)$\\
All&K3&$\textcolor{red}{\left(\begin{array}{rrr}
12 & 4 & -4 \\
4 & 2 & -2 \\
-4 & -2 & -2
\end{array}\right)}$&$\left(\begin{array}{rrr}
-2 & 5 & -3 \\
-1 & 2 & -1 \\
2 & -4 & 3
\end{array}\right), \left(\begin{array}{rrr}
-2 & 5 & 1 \\
-1 & 2 & 1 \\
0 & 0 & 1
\end{array}\right), \left(\begin{array}{rrr}
-1 & 0 & 0 \\
0 & -1 & 0 \\
0 & 2 & 1
\end{array}\right)$\\
\hline\hline
$\mathbf{3}$&Rational&$\left(\begin{array}{r}
-4
\end{array}\right)$&$\left(\begin{array}{r}
-1
\end{array}\right), \left(\begin{array}{r}
1
\end{array}\right), \left(\begin{array}{r}
1
\end{array}\right), \left(\begin{array}{r}
-1
\end{array}\right)$\\
$\omega_1,\omega_2$&K3&$\left(\begin{array}{rrr}
0 & 0 & -4 \\
0 & 4 & -24 \\
-4 & -24 & 124
\end{array}\right)$&$\left(\begin{array}{rrr}
1 & 0 & 0 \\
4 & 1 & 0 \\
-16 & 4 & 1
\end{array}\right), \left(\begin{array}{rrr}
2 & 3 & 1 \\
5 & 2 & 1 \\
-20 & -4 & -3
\end{array}\right), \left(\begin{array}{rrr}
10 & 1 & 1 \\
35 & 2 & 3 \\
-124 & -8 & -11
\end{array}\right), \left(\begin{array}{rrr}
1 & 0 & 0 \\
0 & 1 & 0 \\
0 & 0 & 1
\end{array}\right)$\\
$0,\infty$&Rational&$\left(\begin{array}{r}
-12
\end{array}\right)$&$\left(\begin{array}{r}
1
\end{array}\right), \left(\begin{array}{r}
1
\end{array}\right), \left(\begin{array}{r}
1
\end{array}\right), \left(\begin{array}{r}
1
\end{array}\right)$\\
All&K3&$\left(\begin{array}{rrr}
8 & -4 & -8 \\
-4 & 4 & 8 \\
-8 & 8 & 4
\end{array}\right)$&$\left(\begin{array}{rrr}
1 & 0 & 0 \\
0 & 1 & 0 \\
0 & 4 & -1
\end{array}\right), \left(\begin{array}{rrr}
-2 & -3 & 1 \\
2 & 3 & -2 \\
1 & 1 & 0
\end{array}\right), \left(\begin{array}{rrr}
-22 & -43 & 13 \\
10 & 19 & -6 \\
-7 & -15 & 4
\end{array}\right), \left(\begin{array}{rrr}
7 & 14 & -4 \\
0 & 1 & 0 \\
12 & 28 & -7
\end{array}\right)$\\
\hline\hline
$\mathbf{4}$&Rational&$\left(\begin{array}{r}
-2
\end{array}\right)$&$\left(\begin{array}{r}
-1
\end{array}\right), \left(\begin{array}{r}
-1
\end{array}\right), \left(\begin{array}{r}
1
\end{array}\right), \left(\begin{array}{r}
1
\end{array}\right), \left(\begin{array}{r}
1
\end{array}\right)$\\
$\omega_1,\omega_2$&K3&$\left(\begin{array}{rrr}
-12 & -4 & 0 \\
-4 & 2 & 2 \\
0 & 2 & 2
\end{array}\right)$&$\left(\begin{array}{rrr}
1 & 1 & 1 \\
0 & 2 & -1 \\
0 & 1 & 0
\end{array}\right), \left(\begin{array}{rrr}
1 & 0 & 0 \\
0 & 1 & 0 \\
0 & 0 & 1
\end{array}\right), \left(\begin{array}{rrr}
5 & 0 & 12 \\
0 & -1 & 0 \\
-2 & 0 & -5
\end{array}\right), \left(\begin{array}{rrr}
5 & -7 & 9 \\
0 & 0 & -1 \\
-2 & 3 & -4
\end{array}\right), \left(\begin{array}{rrr}
1 & 0 & 0 \\
0 & 1 & 0 \\
0 & 0 & 1
\end{array}\right)$\\
$0,\infty$&Rational&$\left(\begin{array}{rr}
-12 & -4 \\
-4 & -2
\end{array}\right)$&$\left(\begin{array}{rr}
1 & -4 \\
0 & -1
\end{array}\right), \left(\begin{array}{rr}
1 & 0 \\
0 & 1
\end{array}\right), \left(\begin{array}{rr}
-1 & 4 \\
0 & 1
\end{array}\right), \left(\begin{array}{rr}
1 & 0 \\
0 & 1
\end{array}\right), \left(\begin{array}{rr}
-1 & 0 \\
0 & -1
\end{array}\right)$\\
All&K3&$\left(\begin{array}{rrrr}
4 & 0 & 0 & 0 \\
0 & 4 & -4 & -4 \\
0 & -4 & -2 & 0 \\
0 & -4 & 0 & 0
\end{array}\right)$&$\left(\begin{array}{rrrr}
1 & 0 & 0 & 2 \\
2 & 1 & -2 & 3 \\
0 & 0 & 1 & 1 \\
0 & 0 & 0 & 1
\end{array}\right), \left(\begin{array}{rrrr}
9 & 4 & -8 & 16 \\
-4 & -1 & 0 & -8 \\
0 & 0 & -1 & 0 \\
-4 & -2 & 4 & -7
\end{array}\right), \left(\begin{array}{rrrr}
35 & 16 & -44 & 60 \\
-72 & -29 & 68 & -132 \\
-6 & -2 & 3 & -12 \\
-8 & -4 & 12 & -13
\end{array}\right), \left(\begin{array}{rrrr}
-3 & -4 & 4 & -6 \\
6 & 3 & 2 & 5 \\
2 & 2 & -1 & 3 \\
0 & 2 & -4 & 3
\end{array}\right), \left(\begin{array}{rrrr}
-49 & 88 & 24 & 32 \\
32 & -73 & -32 & -8 \\
36 & -64 & -17 & -24 \\
24 & -32 & 0 & -25
\end{array}\right)$\\
\hline\hline
$\mathbf{5}$&Rational&$\left(\begin{array}{r}
-6
\end{array}\right)$&$\left(\begin{array}{r}
1
\end{array}\right), \left(\begin{array}{r}
1
\end{array}\right), \left(\begin{array}{r}
1
\end{array}\right), \left(\begin{array}{r}
1
\end{array}\right), \left(\begin{array}{r}
1
\end{array}\right)$\\
$\omega_1,\omega_2$&K3&$\left(\begin{array}{rrr}
-4 & 4 & 4 \\
4 & 26 & 20 \\
4 & 20 & 16
\end{array}\right)$&$\left(\begin{array}{rrr}
3 & 4 & -4 \\
-10 & -11 & 10 \\
-8 & -8 & 7
\end{array}\right), \left(\begin{array}{rrr}
9 & -2 & 5 \\
-32 & 9 & -20 \\
-26 & 8 & -17
\end{array}\right), \left(\begin{array}{rrr}
7 & -10 & 11 \\
-12 & 17 & -20 \\
-14 & 20 & -23
\end{array}\right), \left(\begin{array}{rrr}
3 & -4 & 4 \\
2 & -3 & 2 \\
0 & 0 & -1
\end{array}\right), \left(\begin{array}{rrr}
1 & 0 & 0 \\
0 & 1 & 0 \\
0 & 0 & 1
\end{array}\right)$\\
$0,\infty$&Rational&$\left(\begin{array}{rr}
-2 & 0 \\
0 & -2
\end{array}\right)$&$\left(\begin{array}{rr}
0 & 1 \\
1 & 0
\end{array}\right), \left(\begin{array}{rr}
1 & 0 \\
0 & 1
\end{array}\right), \left(\begin{array}{rr}
1 & 0 \\
0 & 1
\end{array}\right), \left(\begin{array}{rr}
0 & -1 \\
-1 & 0
\end{array}\right), \left(\begin{array}{rr}
-1 & 0 \\
0 & -1
\end{array}\right)$\\
All&K3&$\left(\begin{array}{rrrr}
4 & 4 & -4 & 0 \\
4 & 6 & -4 & 0 \\
-4 & -4 & -2 & -2 \\
0 & 0 & -2 & -2
\end{array}\right)$&$\left(\begin{array}{rrrr}
-5 & 4 & 0 & -4 \\
-8 & 6 & 1 & -8 \\
-2 & 1 & 0 & -2 \\
-4 & 3 & 1 & -5
\end{array}\right), \left(\begin{array}{rrrr}
1 & -2 & 3 & -7 \\
4 & -3 & 2 & -2 \\
4 & 0 & -3 & 12 \\
2 & 0 & -2 & 7
\end{array}\right), \left(\begin{array}{rrrr}
1 & -6 & -1 & -7 \\
4 & -7 & 0 & -8 \\
0 & 12 & 3 & 14 \\
-2 & 4 & 0 & 5
\end{array}\right), \left(\begin{array}{rrrr}
-5 & 4 & 0 & 4 \\
-10 & 6 & -1 & 8 \\
12 & -9 & 0 & -10 \\
6 & -3 & 1 & -5
\end{array}\right), \left(\begin{array}{rrrr}
-1 & 0 & 0 & 0 \\
0 & -1 & 0 & 0 \\
0 & 0 & -1 & 0 \\
0 & 0 & 0 & -1
\end{array}\right)$\\
\hline
    \end{tabular}}
    \caption{The parabolic cohomology local systems corresponding to quadratic twists of the five families. }
    \label{tab:five_fam_pc_data}
\end{table}

\begin{example}[Family 1]
	 The elliptic surfaces in Family 1 have their singular fibres located at $0,1,a^2,\infty$ for $a\notin\{0,\pm 1,\infty\}$ and we choose $a=-2$ as a base point for the deformation space, so that the initial local system has its singular points at $t=0,1,4,\infty$. 
	 With respect to the standard basis of loops on the four-punctured sphere, the homological invariant is given by
	 \begin{equation}\resizebox{.8\textwidth}{!}{$M_0=\left(\begin{array}{rr}
1 & 1 \\
0 & 1
\end{array}\right),\ M_1= \left(\begin{array}{rr}
1 & 1 \\
-1 & 0
\end{array}\right),\ M_4= \left(\begin{array}{rr}
0 & -1 \\
1 & -1
\end{array}\right),\ M_\infty=\left(\begin{array}{rr}
1 & 1 \\
0 & 1
\end{array}\right)$}.
	 \end{equation}
On the deformation space $A=\mathbf{P}^1_{a}-\{0,\pm 1,\infty\}$, we choose as generators of the fundamental group loops $\gamma_{-1},\gamma_0,\gamma_1$ that start at $a=-2$ and travel to the singular points via arcs in the upper half-plane as shown in Figure \ref{fig:fam1braids}. 
As $a$ varies along these loops, the motion of the poles is depicted in Figure \ref{fig:fam1braids}.
\begin{figure}%
    \centering
    \subfloat[Basis of loops for Family 1.]{{\includegraphics[width=6cm]{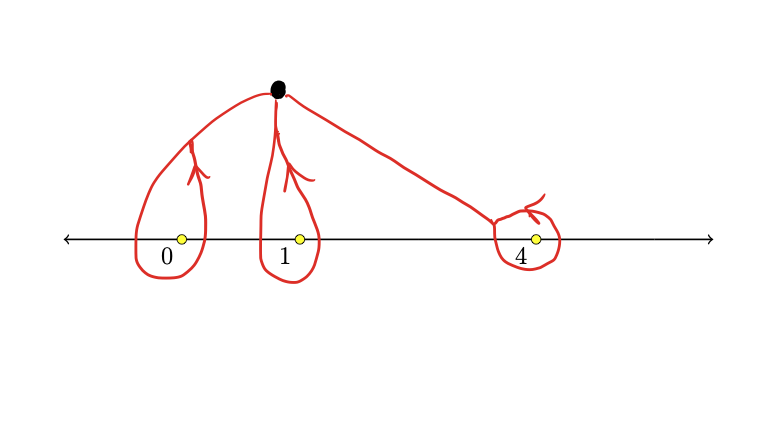} }}%
    \qquad
    \subfloat[The braid $\gamma_{-1}$.]{{\includegraphics[width=6cm]{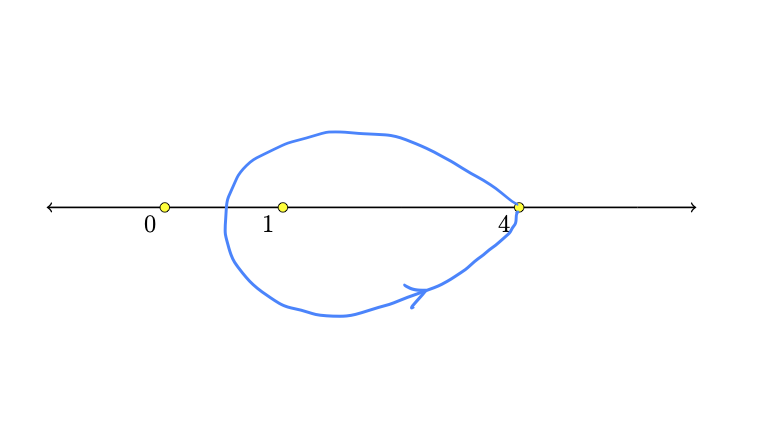} }}%
    \qquad
    \subfloat[The braid $\gamma_0$.]{{\includegraphics[width=6cm]{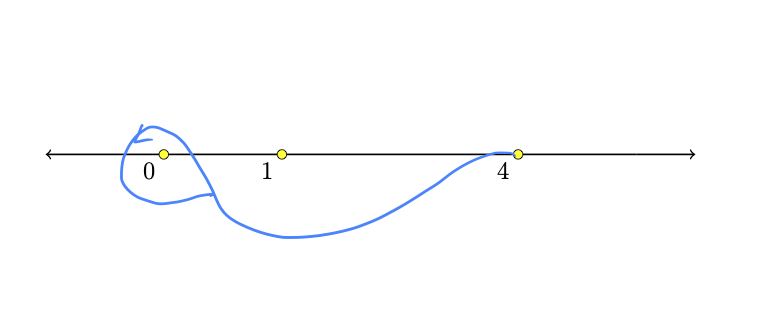} }}%
    \qquad
    \subfloat[The braid $\gamma_1$.]{{\includegraphics[width=6cm]{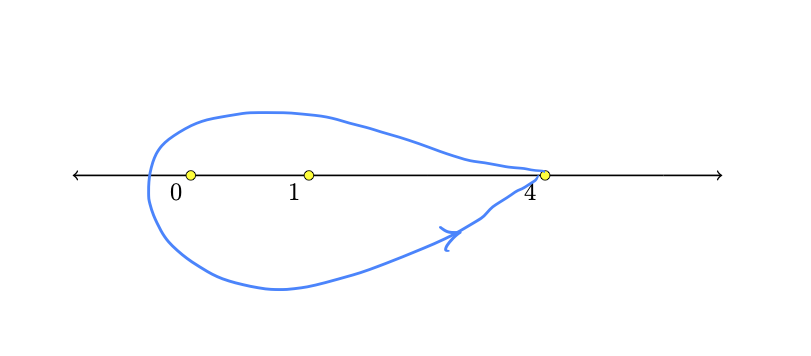} }}%
    \caption{}%
    \label{fig:fam1braids}%
\end{figure}

We determine the braiding map $\ph$ by considering these figures:
\begin{equation}
\ph(\gamma_{-1})=\beta_2^2,\ \ph(\gamma_0)=\beta_2^{-1}\beta_1^4\beta_2,\ \ph(\gamma_1)=\beta_2^{-1}\beta_1^{-2}\beta_2^2\beta_1^2\beta_2.	
\end{equation}
Applying the Dettweiler-Wewers algorithm, we calculate the monodromy representation with respect to a basis of parabolic cocycles found in the first row of Table \ref{tab:five_fam_pc_data}. 
If we perform a quadratic twist at the two $\textrm{I}_1$-fibres, we obtain a family of K3 surfaces. 
The resulting parabolic cohomology local system corresponds to the fourth row of Table \ref{tab:five_fam_pc_data}
\end{example}

Doran also examined solutions to Painlev\'{e} VI that arose by pulling back a fixed local system on a base curve $B$ via a family of rational functions.
In this manner, it was shown that certain Hurwitz curves could be realized as solutions to Painlev\'{e} VI. 
Moreover, since all of the geometric isomonodromic deformations that were considered in \cite{doran_algebraic_2001} arose by pulling back the uniformizing ODE for $\textrm{PSL}_2(\mathbf{Z})$ by families of functional invariants, these solutions are special cases of this construction. 

In the language of variations of local systems, this construction takes the following form. 
Let $\mathcal{L}$ be a local system on a genus zero base curve $B$, and let $\mathcal{H}$ denote a Hurwitz space that paramaterizes covers of $B$ with prescribed ramification profile. 
The pull-back of $\mathcal{L}$ via the family of covers parameterized by $\mathcal{H}$ yields a variation of local systems; its parabolic cohomology is a local system on $\mathcal{H}$. 
Local systems of this kind were briefly considered already in \cite{dettweiler_variation_2006} with an eye towards the inverse Galois problem.

Taking $\mathcal{L}$ to be some polarized variation of Hodge structure on the base curve $B$, then the resulting pullback family is a geometric variations of local systems and we obtain a polarized variation of Hodge structure on the Hurwitz space. 
All of the geometric variations of local systems considered in this paper are close to this form since any variation of Hodge structure underlying a family of elliptic curves is determined by the functional invariant up to quadratic twist. 
Thus, up to quadratic twists, any geometric variation of local systems underlying a family of elliptic surfaces corresponds to some Hurwitz space. 

As we described earlier, the variation of Hodge structure of any $M_N$-polarized family of K3 surfaces is similar determined by its generalized functional invariant \cite{doran_calabi-yau_2017}. 
Therefore, families of threefolds that arise as the total space of $M_N$-polarized families of K3 surfaces are described by Hurwitz covers. 
A complete description of the Hurwitz spaces describing families of Calabi-Yau threefolds that arise in this manner was classified in \cite{doran_calabi-yau_2017}, and our methods allow us to calculate the corresponding variations of Hodge structure underlying these Hurwitz spaces, with applications to the Doran-Harder-Thompson ``gluing/splitting'' mirror conjecture \cite{doran_doran-harder-thompson_2019}.
This is work that will be addressed in a future paper.

\bibliographystyle{amsxport}
\bibliography{output.bbl}{}

\section*{Appendix}
In this appendix, we tabulate the Picard-Fuchs equations and transcendental lattice local systems corresponding to the families of quadratic twists discussed in Corollary \ref{cor:midd_con}, corresponding to the 38 isolated rational elliptic surfaces with four singular fibres classified in \cite{herfurtner_elliptic_1991}. 
The tables are organized in terms of the number of additive singular fibres present in the initial rational elliptic surface starting with zero such fibres and going up to three such fibres.

\begin{table}
    \centering
    \resizebox{\textwidth}{!}{
}
    \caption{Transcendental lattices for twists with three additive fibres.}
    \label{tab:LocalSystems_3_I4}
\end{table}
\end{document}